\newtheorem{lem}{Lemma}[section]
\newtheorem{cor}[lem]{Corollary}
\newtheorem{Fact}[lem]{Fact}
\newtheorem{prop}[lem]{Proposition}
\newtheorem{thm}[lem]{Theorem}
\newtheorem{Defn}[lem]{Definition}
\newtheorem{Notn}[lem]{Notation}
\newtheorem{Ex}[lem]{Example}
\newtheorem{Question}[lem]{Question}
\newtheorem{Property}[lem]{Property}
\newtheorem{Properties}[lem]{Properties}
\newtheorem{Discussion}[lem]{Remark}
\newtheorem{Construction}[lem]{Construction}
\newtheorem{Subprops}{}[lem]
\newtheorem{Para}[lem]{}
\newenvironment{defn}{\begin{Defn}\rm}{\end{Defn}}
\newenvironment{fact}{\begin{Fact}\rm}{\end{Fact}}
\newenvironment{para}{\begin{Para}\rm}{\end{Para}}
\newenvironment{disc}{\begin{Discussion}\rm}{\end{Discussion}}
\newtheorem{intthm}{Theorem}
\newcommand{\comp}[1]{\widehat{#1}}
\newcommand{\ideal}[1]{\mathfrak{#1}}
\newcommand{\m}{\ideal{m}}
\newcommand{\pd}{\operatorname{pd}}
\newcommand{\ext}{\operatorname{Ext}}
\newcommand{\bbz}{\mathbb{Z}}
\newcommand{\bbq}{\mathbb{Q}}
\newcommand{\id}{\operatorname{id}}
\newcommand{\HH}{\operatorname{H}}
\newcommand{\Hom}{\operatorname{Hom}}
\newcommand{\coker}{\operatorname{Coker}}
\newcommand{\fd}{\operatorname{fd}}
\newcommand{\spec}{\operatorname{Spec}}
\newcommand{\tor}{\operatorname{Tor}}
\newcommand{\vf}{\varphi}
\newcommand{\im}{\operatorname{Im}}
\newcommand{\xra}{\xrightarrow}
\newcommand{\shift}{\mathsf{\Sigma}}
\newcommand{\onto}{\twoheadrightarrow}
\newcommand{\Ker}{\operatorname{Ker}}
\newcommand{\wti}{\widetilde}
\newcommand{\cat}[1]{\mathcal{#1}}
\newcommand{\catx}{\cat{X}}
\newcommand{\caty}{\cat{Y}}
\newcommand{\catm}{\cat{M}(R)}
\newcommand{\catv}{\cat{V}}
\newcommand{\catw}{\cat{W}}
\newcommand{\catg}{\cat{G}}
\newcommand{\catp}{\cat{P}(R)}
\newcommand{\catf}{\cat{F}(R)}
\newcommand{\cati}{\cat{I}(R)}
\newcommand{\catpp}{\cat{P}}
\newcommand{\catff}{\cat{F}}
\newcommand{\catii}{\cat{I}}
\newcommand{\catgii}{\cat{GI}}
\newcommand{\catgi}{\cat{GI}(R)}
\newcommand{\catgf}{\cat{GF}(R)}
\newcommand{\catgff}{\cat{GF}}
\newcommand{\catgfc}{\cat{GF}_C(R)}
\newcommand{\catgfr}{\cat{GF}_R(R)}
\newcommand{\catgfcc}{\cat{GF}_C}
\newcommand{\catgic}{\cat{GI}_C(R)}
\newcommand{\catgir}{\cat{GI}_R(R)}
\newcommand{\catgpc}{\cat{GP}_C(R)}
\newcommand{\catac}{\cat{A}_C(R)}
\newcommand{\catbc}{\cat{B}_C(R)}
\newcommand{\catic}{\cat{I}_C(R)}
\newcommand{\catpc}{\cat{P}_C(R)}
\newcommand{\catfc}{\cat{F}_C(R)}
\newcommand{\caticc}{\cat{I}_C}
\newcommand{\catpcc}{\cat{P}_C}
\newcommand{\catfcc}{\cat{F}_C}
\newcommand{\catfcot}{\cat{F}^{\text{cot}}(R)}
\newcommand{\catfcott}{\cat{F}^{\text{cot}}}
\newcommand{\catfccot}{\cat{F}_C^{\text{cot}}(R)}
\newcommand{\catfccott}{\cat{F}_C^{\text{cot}}}
\newcommand{\cathc}{\cat{H}_C}
\newcommand{\PP}{\catpp}
\newcommand{\catpd}[1]{\cat{#1}\text{-}\pd}
\newcommand{\xpd}{\catpd{X}}
\newcommand{\catid}[1]{\cat{#1}\text{-}\id}
\newcommand{\yid}{\catid{Y}}
\newcommand{\gfcpd}{\cat{GF}_C\text{-}\pd}
\newcommand{\gfd}{\operatorname{Gfd}}
\newcommand{\icid}{\cat{I}_C\text{-}\id}
\newcommand{\fcpd}{\cat{F}_C\text{-}\pd}
\newcommand{\fcotpd}{\cat{F}^{\text{cot}}\text{-}\pd}
\newcommand{\fccotpd}{\cat{F}^{\text{cot}}_C\text{-}\pd}
\newcommand{\finrescat}[1]{\operatorname{res}\comp{\cat{#1}}}
\newcommand{\finrescatx}{\finrescat{X}}
\newcommand{\finrescaty}{\finrescat{Y}}
\newcommand{\fincorescat}[1]{\operatorname{cores}\comp{\cat{#1}}}
\newcommand{\propcorescat}[1]{\operatorname{cores}\wti{\cat{#1}}}
\newcommand{\fincorescatx}{\fincorescat{X}}
\newcommand{\propcorescatfccot}{\operatorname{cores}\wti{\catfccot}}
\newcommand{\finrescatfccot}{\operatorname{res}\comp{\catfccot}}
\newcommand{\finrescatfc}{\operatorname{res}\comp{\catfc}}
\newcommand{\finrescatpc}{\operatorname{res}\comp{\catpc}}
\newcommand{\finrescatfccott}{\operatorname{res}\comp{\catfccott}}
\newcommand{\finrescatfcot}{\operatorname{res}\comp{\catfcot}}
\newcommand{\finrescatgfc}{\operatorname{res}\comp{\catgfc}}
\newcommand{\fincorescatfccot}{\operatorname{cores}\comp{\catfccot}}
\newcommand{\fincorescaty}{\fincorescat{Y}}
\newcommand{\propcorescaty}{\propcorescat{Y}}
\newcommand{\propcorescatw}{\propcorescat{W}}
\renewcommand{\geq}{\geqslant}
\renewcommand{\leq}{\leqslant}
\renewcommand{\ker}{\Ker}
\newcommand{\G}{\mathcal{G}}
\newcommand{\gpcpd}{\operatorname{\G\PP_C\text{-}\pd}}
\renewcommand{\hom}{\Hom}
\newcommand{\qmodz}{\bbq/\bbz}
\newcommand{\pdual}[1]{\hom_{\bbz}(#1,\qmodz)}
\begin{document}

\bibliographystyle{amsplain}

\author{Sean Sather-Wagstaff}

\address{Sean Sather-Wagstaff, Department of Mathematics,
300 Minard Hall,
North Dakota State University,
Fargo, North Dakota 58105-5075, 
USA}
\email{Sean.Sather-Wagstaff@ndsu.edu}
\urladdr{http://math.ndsu.nodak.edu/faculty/ssatherw/}

\author{Tirdad Sharif}
\address{Tirdad Sharif, School of Mathematics, Institute for Studies in
Theoretical Physics and Mathematics, P. O. Box 19395-5746, Tehran, Iran}
\email{sharif@ipm.ir}
\urladdr{http://www.ipm.ac.ir/IPM/people/personalinfo.jsp?PeopleCode=IP0400060}
\thanks{TS is supported by a grant from IPM, (No. 83130311).}

\author{Diana White}
\address{Diana White, Department of Mathematics,
LeConte College,
1523 Greene Street,
University of South Carolina,
Columbia, SC 29208,
USA}
\email{dwhite@math.sc.edu}
\urladdr{http://www.math.sc.edu/~dwhite/}

\title[AB-Contexts and Stability
for Gorenstein Flat Modules]{AB-Contexts and Stability
for Gorenstein Flat Modules with Respect to
Semidualizing Modules}


\keywords{AB-contexts, Auslander-Buchweitz approximations,
Auslander classes, Bass classes, cotorsion, 
Gorenstein flats, Gorenstein injectives, semidualizing}
\subjclass[2000]{13C05, 13C11, 13D02, 13D05}

\begin{abstract}
We investigate the properties of categories of
$\text{G}_C$-flat $R$-modules where $C$ is a
semidualizing module over a commutative noetherian ring $R$.
We prove that the category of all
$\text{G}_C$-flat $R$-modules is part of a weak AB-context,
in the terminology of Hashimoto.  In particular,
this allows us to deduce the existence of certain
Auslander-Buchweitz approximations
for $R$-modules of finite $\text{G}_C$-flat dimension.
We also prove that two procedures for building $R$-modules
from complete resolutions by certain 
subcategories of $\text{G}_C$-flat $R$-modules
yield only the modules in the original subcategories.
\end{abstract}

\maketitle

\section*{Introduction}

Auslander
and Bridger~\cite{auslander:adgeteac, auslander:smt} 
introduce the modules of finite G-dimension 
over a commutative noetherian ring $R$, in part, to identify
a class of finitely generated $R$-modules with particularly nice duality properties
with respect to $R$.  
They are exactly the $R$-modules  which admit 
a finite resolution by modules of G-dimension 0.
As a special case, the duality theory for these modules recovers the
well-known duality theory for  finitely generated modules
over a Gorenstein ring.

This notion has been extended
in several directions. For instance, 
Enochs, Jenda and Torrecillas~\cite{enochs:gipm, enochs:gf}
introduce the Gorenstein projective modules and the
Gorenstein flat modules; these are analogues of modules of  G-dimension 0
for the  non-finitely generated arena.
Foxby~\cite{foxby:gmarm},
Golod~\cite{golod:gdagpi}
and Vasconcelos~\cite{vasconcelos:dtmc}
focus on finitely generated modules, but consider duality with
respect to a semidualizing module $C$.
Recently,
Holm and J\o rgensen~\cite{holm:smarghd} have unified these approaches
with the $\text{G}_C$-projective modules and the
$\text{G}_C$-flat modules.
For background and definitions, see Sections~\ref{sec01} and~\ref{sec02}.

The purpose of this paper is to use 
cotorsion flat modules 
in order to further study the $\text{G}_C$-flat modules, which 
are more technically challenging to investigate than the
$\text{G}_C$-projective modules. 
Cotorsion flat modules 
have been successfully used to investigate  flat modules,
for instance in the work of Xu~\cite{xu:fcm}, and 
this paper shows how they 
are similarly well-suited for studying the $\text{G}_C$-flat modules. 

More specifically, an $R$-module is \emph{$C$-flat $C$-cotorsion}  
when is isomorphic to an $R$-module of the form $F\otimes_R C$ where $F$ is flat and cotorsion.
We let $\catfccot$ denote the category of all 
$C$-flat $C$-cotorsion $R$-modules, and we let
$\finrescatfccot$ denote the category of all $R$-modules admitting a finite
resolution by
$C$-flat $C$-cotorsion $R$-modules.
The first step of our analysis is carried out in Section~\ref{sec04}
where we investigate the fundamental properties of these categories;
see Theorem~\ref{thma}\eqref{thma2} for some of the conclusions from this 
section.

Section~\ref{sec03} contains our analysis of the category of
$\text{G}_C$-flat modules, denoted $\catgfc$.  This section
culminates in the following theorem. In the terminology
of Hashimoto~\cite{hashimoto:abaem}, it says that the triple
$(\catgfc,\finrescatfccot,\catfccot)$ 
satisfies the axioms for a weak AB-context.
The proof of this result is in~\eqref{thm0301}.

\begin{intthm} \label{thma}
Let $C$ be a semidualizing $R$-module.
\begin{enumerate}[\quad\rm(a)]
\item \label{thma1}
$\catgfc$ is closed under extensions, kernels of epimorphisms
and  summands.
\item \label{thma2}
$\finrescatfccot$ is closed under cokernels of monomorphisms,
extensions and  summands, and $\finrescatfccot\subseteq\finrescatgfc$.
\item \label{thma3}
$\catfccot= \catgfc\cap \finrescatfccot$, and 
$\catfccot$ is an injective cogenerator for $\catgfc$.
\end{enumerate}
\end{intthm}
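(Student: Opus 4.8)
The plan is to prove the three statements in order, reducing everything to manipulations of the complete resolutions that define $\catgfc$ together with the structural results on cotorsion flat modules gathered in Section~\ref{sec04}. Two general tools recur: the Foxby-type equivalence given by $C\otimes_R-$ and $\Hom_R(C,-)$ between the Auslander class $\catac$ and the Bass class $\catbc$, used to convert $C$-flat data into flat data (so that, for instance, $\ext^{\ge1}_R(X,W)=0$ whenever $X\in\catfc$ and $W\in\catfccot$, which via the equivalence reduces to the cotorsion condition $\ext^1_R(\text{flat},\text{flat cotorsion})=0$ and a dimension shift); and the elementary fact that $\catfc\subseteq\catgfc$, whence $\catfccot\subseteq\catgfc$. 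For part~\eqref{thma1}, closure of $\catgfc$ under summands is routine, since a direct summand of a complete resolution is again a complete resolution for the corresponding summand of the module. Closure under extensions is proved by applying the horseshoe lemma to the resolution halves and to the coresolution halves of complete resolutions of the two outer terms and splicing, the horseshoe on the coresolution side being legitimized by the vanishing $\ext^1_R(-,W)=0$ on $\catgfc$ for $W\in\catfccot$. Closure under kernels of epimorphisms is the genuinely hard point; I return to it at the end.

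For part~\eqref{thma2}, once Section~\ref{sec04} provides that $\catfccot$ is closed under extensions, kernels of epimorphisms and summands and is self-orthogonal in the relevant range, the stated closure properties of $\finrescatfccot$ under cokernels of monomorphisms, extensions and summands are the standard resolution-dimension bookkeeping of Auslander--Buchweitz theory: dimension shifting along short exact sequences, the horseshoe lemma, and the monotonicity of $\catfccot$-resolution dimension under passage to summands and short exact sequences. The inclusion $\finrescatfccot\subseteq\finrescatgfc$ is immediate, since $\catfccot\subseteq\catgfc$ means any finite $\catfccot$-resolution is in particular a finite $\catgfc$-resolution.

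For part~\eqref{thma3}, the inclusion $\catfccot\subseteq\catgfc\cap\finrescatfccot$ holds because $\catfccot\subseteq\catgfc$ and every object of $\catfccot$ is its own $\catfccot$-resolution of length $0$. That $\catfccot$ is an injective cogenerator for $\catgfc$ amounts to: (i) $\catfccot\subseteq\catgfc$ (done); (ii) $\ext^{\ge1}_R(M,W)=0$ for $M\in\catgfc$ and $W\in\catfccot$, read off a complete resolution $\mathbf{X}$ of $M$, since $\Hom_R(\mathbf{X},W)$ is exact by construction and the $C$-flat modules occurring in $\mathbf{X}$ are $\ext^{\ge1}$-orthogonal to $W$; and (iii) for each $M\in\catgfc$ a short exact sequence $0\to M\to W\to M'\to0$ with $W\in\catfccot$ and $M'\in\catgfc$. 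For (iii) I start from the first cosyzygy $0\to M\to X^0\to M^1\to0$ of a complete resolution, with $X^0\in\catfc$ and $M^1\in\catgfc$; writing $X^0\cong F\otimes_R C$ with $F$ flat, I take the special cotorsion preenvelope $0\to F\to\widehat F\to F'\to0$ (so $\widehat F$ is flat cotorsion and $F'$ is flat), tensor with $C$, and push out $0\to M\to X^0\to M^1\to0$ along the monomorphism $X^0\hookrightarrow\widehat F\otimes_R C$; the resulting sequence $0\to M\to\widehat F\otimes_R C\to M''\to0$ has $\widehat F\otimes_R C\in\catfccot$, and $M''\in\catgfc$ by part~\eqref{thma1} because $0\to M^1\to M''\to F'\otimes_R C\to0$ exhibits $M''$ as an extension of objects of $\catgfc$. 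Finally, the reverse inclusion $\catgfc\cap\finrescatfccot\subseteq\catfccot$ is proved by induction on the $\catfccot$-resolution length $n$ of $M\in\catgfc$: for $n=0$ it is trivial, and for $n\ge1$ one writes $0\to K\to W_0\to M\to0$ with $W_0\in\catfccot$ and $K$ of $\catfccot$-resolution dimension $\le n-1$; part~\eqref{thma1} (kernels of epimorphisms) gives $K\in\catgfc$, so $K\in\catfccot$ by induction, so $\ext^1_R(M,K)=0$ by (ii), the sequence splits, and $M$ is a summand of $W_0\in\catfccot$, hence lies in $\catfccot$.

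The main obstacle is the closure of $\catgfc$ under kernels of epimorphisms. As in the classical Gorenstein flat situation this is not formal, and it is exactly where the cotorsion flat machinery earns its keep. Given $0\to M'\to M\to M''\to0$ with $M,M''\in\catgfc$, one must build a complete resolution of $M'$: assemble its coresolution half by a horseshoe from the coresolution halves of $M$ and $M''$ (using $\ext^1_R(M'',\catfccot)=0$), take a $\catfc$-resolution of $M'$ for the other half, splice through $M'$, and then verify that the spliced total complex stays exact after applying $\Hom_R(-,W)$ for every $W\in\catfccot$. This last exactness is the crux, and it is here that the structure theory of flat cotorsion modules from Section~\ref{sec04}, which controls $\ext$ and $\tor$ of flat modules against flat cotorsion modules and thereby forces the relevant connecting maps to vanish, does the real work; the remainder of the argument is diagram chasing and dimension shifting.
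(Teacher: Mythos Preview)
Your proposal has a genuine gap rooted in a misreading of the definition of a complete $\catff\catfcc$-resolution. The defining exactness condition (Definition~\ref{defn0301}) is that the complex be $-\otimes_R\caticc$-exact, \emph{not} $\Hom_R(-,\catfccot)$-exact. Thus in (c)(ii) the vanishing $\ext^{\geq 1}_R(M,W)=0$ for $M\in\catgfc$ and $W\in\catfccot$ cannot be ``read off'' the complete resolution ``by construction''; this is precisely the nontrivial content of Lemma~\ref{extgfcvan}, which the paper proves by Pontryagin duality: one shows $N^*\in\catic$, so $\ext^i_R(M,N^{**})\cong(\tor^R_i(M,N^*))^*=0$ from the $\tor$-vanishing that \emph{is} built into the definition, and then checks that $N$ is a summand of $N^{**}$. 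The same misreading infects your treatment of kernels of epimorphisms in (a): you propose to verify $\Hom_R(-,\catfccot)$-exactness of the spliced complex, but what must be verified is $-\otimes_R\caticc$-exactness, and your ``horseshoe from the coresolution halves of $M$ and $M''$'' does not produce a coresolution of $M'$ (horseshoe would give one for the middle term). The paper instead proves all of part~(a) in one stroke by dualizing: $M\in\catgfc$ if and only if $M^*\in\catgic$ (Lemma~\ref{pdual2}), and the closure properties of $\catgic$ are already known. A smaller issue: you assert that Section~\ref{sec04} shows $\catfccot$ is closed under kernels of epimorphisms, but Lemma~\ref{lem0207} only gives products, extensions, and summands; the paper's proof of the closure properties of $\finrescatfccot$ (Proposition~\ref{lem0210}) again goes through Pontryagin duals and the characterization in Proposition~\ref{lem0401}, not through generic Auslander--Buchweitz bookkeeping.

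Where your outline does align with the paper is in (c)(iii): the pushout along a cotorsion-flat preenvelope of the first $C$-flat cosyzygy is exactly the argument of Lemma~\ref{fccotpreenv}, and your splitting argument for $\catgfc\cap\finrescatfccot\subseteq\catfccot$ is essentially Lemma~\ref{lem0304} (the paper avoids the induction by invoking $\catgfc\perp\finrescatfccot$ directly). But both of these steps rely on $\catgfc\perp\catfccot$, and on closure of $\catgfc$ under extensions, so until you repair the orthogonality argument and part~(a) the rest is not self-supporting.
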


In conjunction with~\cite[(1.12.10)]{hashimoto:abaem},
this result implies 
many of the conclusions of~\cite{auslander:htmcma}
for the triple
$(\catgfc,\finrescatfccot,\catfccot)$.
For instance, we conclude that every module $M$ of
finite $\text{G}_C$-flat dimension 
fits in an exact sequence
$$0\to Y\to X\to M\to 0$$
such that $X$ is in $\catgfc$ and $Y$ is in $\finrescatfccot$.
Such ``approximations'' have been very useful, for instance, in the
study of modules of finite G-dimension.
See Corollary~\ref{cor0301} for this and other conclusions.

In Section~\ref{sec05} we apply these techniques to continue our
study of stability properties of Gorenstein categories, initiated
in~\cite{sather:sgc}. 
For each subcategory
$\catx$ of the category of $R$-modules,
let $\catg^1(\catx)$ denote the  category of all $R$-modules
isomorphic to $\coker(\partial^X_1)$ for some
exact complex $X$ in $\catx$ such that the complexes $\hom_R(X',X)$ and
$\hom_R(X,X')$ are exact for each module $X'$ in $\catx$.  
This definition is a modification of the construction of $\text{G}_C$-projective $R$-modules.
Inductively, set $\catg^{n+1}(\catx)=\catg(\catg^n(\catx))$
for each $n\geq 1$.  The techniques of this paper allow us to prove the
following $\text{G}_C$-flat versions of some results of~\cite{sather:sgc};
see Corollary~\ref{cor0501} and Theorem~\ref{prop0702}.

\begin{intthm} \label{thmb}
Let $C$ be a semidualizing $R$-module and let $n\geq 1$.
\begin{enumerate}[\quad\rm(a)]
\item \label{thmb1}
We have $\catg^n(\catgfc\cap\catbc)=\catgfc\cap\catbc$.
\item \label{thmb2}
If
$\dim(R)<\infty$, then
$\catg^n(\catfccot)=\catgfc\cap\catbc\cap\catfc^{\perp}$.
\end{enumerate}
\end{intthm}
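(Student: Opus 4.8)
The plan is to reduce both parts to the assertion that the categories appearing on the right-hand sides are fixed points of the operator $\catg(-)$, and then to iterate. For \eqref{thmb1} it is enough to prove $\catg(\catgfc\cap\catbc)=\catgfc\cap\catbc$; an easy induction on $n$ then yields the general statement. For \eqref{thmb2} it suffices to prove the two equalities $\catg(\catfccot)=\catgfc\cap\catbc\cap\catfc^{\perp}$ and $\catg(\catgfc\cap\catbc\cap\catfc^{\perp})=\catgfc\cap\catbc\cap\catfc^{\perp}$, after which the induction again collapses. Throughout I would lean on the AB-context structure from Theorem~\ref{thma} — in particular that $\catgfc$ is closed under extensions, kernels of epimorphisms and summands, that $\catfccot$ is an injective cogenerator for $\catgfc$, and that $\catfccot$ is self-orthogonal — together with the standard properties of the Bass class: $\catbc$ is closed under extensions, kernels of epimorphisms, cokernels of monomorphisms and summands, it contains $\catfc\supseteq\catfccot$ and satisfies $\catfccot\subseteq\catfc^{\perp}$, and $C\otimes_R-$ and $\hom_R(C,-)$ furnish the Foxby equivalence $\catac\simeq\catbc$ restricting to $\catf\simeq\catfc$.

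For the inclusion of each category into its own $\catg(-)$, fix $M$ in the category. Since $M$ is $\text{G}_C$-flat and $\catfccot$ is an injective cogenerator for $\catgfc$, one can splice repeatedly — a proper $\catfccot$-resolution on one side and the injective-cogenerator coresolution on the other — to obtain an exact complex $X$ with all terms in $\catfccot$, with $M\cong\coker(\partial^X_1)$, and with $\hom_R(X,X')$ exact for every $X'\in\catfccot$; exactness of $\hom_R(X',X)$ for such $X'$ then follows by a dimension-shift, using self-orthogonality of $\catfccot$ and the fact that every syzygy of $X$ remains in $\catgfc$. As $\catfccot$ lies in each of the three ambient categories, this already gives $M\in\catg(\catgfc\cap\catbc)$, and — after checking that the syzygies of $X$ also stay in $\catfc^{\perp}$, which is where one invokes $\dim(R)<\infty$ to keep the resolution inside the $C$-flat $C$-cotorsion modules — also $M\in\catg(\catgfc\cap\catbc\cap\catfc^{\perp})$; the case $M\in\catfccot$ yields $\catfccot\subseteq\catg(\catfccot)$.

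For the reverse inclusions, suppose $M\cong\coker(\partial^X_1)$ for an exact complex $X$ with terms in the ambient category $\cat{Z}$ and with $\hom_R(X,X')$ and $\hom_R(X',X)$ exact for every module $X'\in\cat{Z}$. The membership $M\in\catbc$ is immediate: $\catbc$ is closed under kernels of epimorphisms and cokernels of monomorphisms, so every syzygy of $X$, and $M$ in particular, lies in $\catbc$. For $M\in\catgfc$: each term of $X$ is $\text{G}_C$-flat, hence is the relevant cokernel of a complete resolution by modules of $\catfccot$, and I would amalgamate these along the differentials of $X$ — a ``totally acyclic resolution of the complex $X$'', built by repeated horseshoe constructions — into a single exact complex of $C$-flat $C$-cotorsion modules having $M$ as its relevant cokernel; passing this complex through the Foxby equivalence via $\hom_R(C,-)$, legitimate because its terms and cycles lie in $\catbc$, then exhibits it as arising from an exact complex of flat modules of the kind witnessing $\text{G}_C$-flatness, so $M\in\catgfc$. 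Finally, for \eqref{thmb2} one needs $M\in\catfc^{\perp}$: the two-sided $\hom_R$-exactness of $X$ against $\catfccot$ forces $\ext^{\geq 1}_R(\catfccot,M)=0$, and since $\dim(R)<\infty$ every flat $R$-module has finite projective dimension, so embedding an arbitrary $C$-flat module into one in $\catfccot$ with $C$-flat cokernel and dimension-shifting a bounded number of times propagates this to $\ext^{\geq 1}_R(\catfc,M)=0$. Specializing $\cat{Z}=\catfccot$ identifies $\catg(\catfccot)$ exactly, and taking $\cat{Z}=\catgfc\cap\catbc\cap\catfc^{\perp}$, which still contains $\catfccot$ so that the amalgamation and the $\ext$-propagation both apply, shows that category is fixed by $\catg(-)$ as well.

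The step I expect to be the main obstacle is the amalgamation in the reverse inclusions: converting a complex whose terms are themselves ``Gorenstein'' — members of $\catg$ of the ambient category — back into a genuine totally acyclic complex over the small category $\catfccot$ with the prescribed cokernel, while verifying that two-sided $\hom_R$-exactness against $\catfccot$ survives the construction. In \eqref{thmb2} there is the further delicate point of keeping the amalgamated complex, and every syzygy produced along the way, inside $\catfccot$ rather than merely inside $\catfc$; this is exactly where Xu's structure theory for flat cotorsion modules over a ring of finite Krull dimension is indispensable — flat covers and cotorsion envelopes of flat, respectively cotorsion, modules remain flat, respectively cotorsion, and syzygies in flat resolutions of cotorsion modules can be chosen cotorsion — and it is the only place the hypothesis $\dim(R)<\infty$ enters.
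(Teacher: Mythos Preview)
Your reductions are sound, and for the directions of the form $\cat{Z}\subseteq\catg(\cat{Z})$ the resolution $0\to M\to 0$ already suffices; your splicing construction is only really needed for the nontrivial inclusion $\catgfc\cap\catbc\cap\catfc^{\perp}\subseteq\catg(\catfccot)$ in part~\eqref{thmb2}, where it is plausible. The genuine gap is in the reverse inclusion $\catg(\catgfc\cap\catbc)\subseteq\catgfc$ for part~\eqref{thmb1}. You claim each term $X_i\in\catgfc\cap\catbc$ is the cokernel of a ``complete resolution by modules of $\catfccot$'', but producing a left $\catfccott$-resolution of $X_i$ (via anything like Lemma~\ref{lem0206}\eqref{lem0206c}) requires $X_i$ to be $C$-cotorsion, i.e.\ $X_i\in\catfc^{\perp}$, which is not available in part~\eqref{thmb1} and which in fact rests on $\dim(R)<\infty$ when it does hold (cf.~Proposition~\ref{prop0701}\eqref{prop0701b}). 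Even if one settles for resolutions in $\catfc$ rather than $\catfccot$, the doubly-infinite horseshoe you invoke is delicate: the required splittings at each stage depend on $\ext^1$-vanishing you have not established, and after amalgamation you must still verify $-\otimes_R\caticc$-exactness of the resulting complex to certify $\text{G}_C$-flatness of $M$. You rightly flag the amalgamation as the main obstacle; with the ingredients you list I do not see how to close it.

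The paper avoids amalgamation entirely by introducing an auxiliary operator $\cathc(-)$, built from complexes that are only $\hom_R(\catpcc,-)$-exact and $\hom_R(-,\catfccott)$-exact, and proving $\cathc^n(\catfc)=\catgfc\cap\catbc$ for every $n\geq 1$ via Pontryagin duality and the known identity $\catg(\catic)=\catgic\cap\catac$ from~\cite{sather:sgc}. Part~\eqref{thmb1} then follows from the sandwich $\catgfc\cap\catbc\subseteq\catg^n(\cathc(\catfc))\subseteq\cathc^{n+1}(\catfc)=\catgfc\cap\catbc$. For part~\eqref{thmb2} the paper shows $\catg(\catfccot)=\cathc(\catfccot)=\catgfc\cap\catbc\cap\catfc^{\perp}$ directly and invokes the general stability result~\cite[(4.10)]{sather:sgc} (applicable because $\catfccot\perp\catfccot$) to pass from $n=1$ to all $n$, never needing to analyse $\catg(\catgfc\cap\catbc\cap\catfc^{\perp})$ at all.
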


Here $\catbc$ is the Bass class associated to $C$, and $\catfc^{\perp}$
is the category of all $R$-modules $N$ such that
$\ext^{\geq 1}_R(F\otimes_R C,N)=0$ for each flat $R$-module $F$.
In particular, when $C=R$ this result yields
$\catg^n(\catgf)=\catgf$ and,
when $\dim(R)$ is finite, $\catg^n(\catfcot)=\catgf\cap\catf^{\perp}$.

\section{Modules, Complexes and Resolutions}\label{sec01}

We begin with some notation and terminology for use throughout this paper.

\begin{defn} \label{notation01}
Throughout this work
$R$ is a commutative noetherian ring and
$\catm$ is the category of $R$-modules.
We use the term ``subcategory'' to mean a ``full, additive subcategory
$\catx\subseteq\catm$
such that, for all $R$-modules $M$ and $N$, if $M\cong N$ and
$M\in\catx$, then $N\in\catx$.'' 
Write $\catp$, $\catf$ and $\cati$
for the subcategories of projective, flat and injective
$R$-modules, respectively.
\end{defn}

\begin{defn} \label{notation01a}
We fix subcategories $\catx$, $\caty$, $\catw$, and $\catv$  of $\catm$ such that
$\catw\subseteq\catx$
and $\catv \subseteq\caty$.
Write $\catx\perp\caty$
if $\ext_R^{\geq1}(X,Y)=0$ for each $X\in \catx$ and each $Y\in\caty$.
For an $R$-module $M$, write $M\perp\caty$ (resp., $\catx\perp M$)
if $\ext_R^{\geq1}(M,Y)=0$ for each  $Y\in \caty$
(resp., if $\ext_R^{\geq1}(X,M)=0$ for each  $X\in \catx$).
Set 
$$\catx^{\perp}=\text{the subcategory of 
$R$-modules $M$ such that $\catx\perp M$.}$$
We say $\catw$ is a \emph{cogenerator} for $\catx$ if,
for each  $X\in\catx$, there is an exact sequence 
$$0\to X\to W\to X'\to 0$$
such that $W\in \catw$ and $X'\in \catx$;
and
$\catw$ is an \emph{injective cogenerator} for $\catx$ if
$\catw$ is a cogenerator for $\catx$ and $\catx\perp\catw$.
The terms \emph{generator} and \emph{projective generator}
are defined dually.

We say that $\catx$ is \emph{closed under extensions} when, for every
exact sequence
\begin{equation} \tag{$\ast$} \label{eq01a}
0\to M'\to M\to M''\to 0
\end{equation}
if $M',M''\in \catx$, then $M\in\catx$.
We say that $\catx$ is \emph{closed under kernels of monomorphisms} when, for every
exact sequence~\eqref{eq01a},
if $M',M\in \catx$, then $M''\in\catx$.
We say that $\catx$ is \emph{closed under cokernels of epimorphisms} when, for every
exact sequence~\eqref{eq01a},
if $M,M''\in \catx$, then $M'\in\catx$.
We say that $\catx$ is \emph{closed under summands} when, for every
exact sequence~\eqref{eq01a},
if $M\in \catx$ and~\eqref{eq01a} splits, then $M',M''\in\catx$.
We say that $\catx$ is \emph{closed under products} when, for every
set $\{M_{\lambda}\}_{\lambda\in\Lambda}$ of modules in $\catx$,
we have $\prod_{\lambda\in\Lambda}M_{\lambda}\in\catx$.
\end{defn}

\begin{defn} \label{notation07}
We employ the notation  from~\cite{christensen:gd} for $R$-complexes.
In particular, $R$-complexes are indexed homologically
$$M =\cdots\xra{\partial^M_{n+1}}M_n\xra{\partial^M_n}
M_{n-1}\xra{\partial^M_{n-1}}\cdots$$
with
$n$th homology module denoted
$\HH_n(M)$.
We frequently identify $R$-modules with $R$-complexes concentrated in degree 0.

Let $M,N$ be $R$-complexes.
For each integer $i$,
let $\shift^i M$ denote the complex with
$(\shift^i M)_n=M_{n-i}$ and $\partial_n^{\shift^i M}=(-1)^i\partial_{n-i}^M$.
Let $\hom_R(M,N)$ and $M\otimes_R N$ denote the associated 
Hom complex and tensor product complex, respectively.
A morphism $\alpha\colon M\to N$ is a \emph{quasiisomorphism} 
when each induced map $\HH_n(\alpha)\colon\HH_n(M)\to\HH_n(N)$
is bijective. Quasiisomorphisms are designated by the symbol $\simeq$.

The complex $M$ is \emph{$\hom_R(\catx,-)$-exact} if the complex
$\hom_R(X,M)$ is exact for each $X \in\catx$.
Dually, the complex $M$ is \emph{$\hom_R(-,\catx)$-exact} if 
$\hom_R(M,X)$ is exact for each $X\in \catx$,
and $M$ is \emph{$-\otimes_R\catx$-exact} if 
$M\otimes_R X$ is exact for each $X \in \catx$.
\end{defn}

\begin{defn} \label{notation03}
When
$X_{-n}=0=\HH_n(X)$ for all $n>0$, the natural morphism
$X\to\HH_0(X)=M$ is a quasiisomorphism, that is,
the following sequence is exact
$$X^+ = \cdots\xra{\partial^X_{2}}X_1
\xra{\partial^X_{1}}X_0\to M\to 0.$$ 
In this event, $X$ is an
\emph{$\catx$-resolution} of $M$ if each $X_n$ is  in $\catx$, and
$X^+$ is the \emph{augmented
$\catx$-resolution} of $M$ associated to $X$.
We write ``projective resolution'' in lieu of
``$\cat{P}$-resolution'', and we write ``flat resolution'' in lieu of
``$\cat{F}$-resolution''.
The \emph{$\catx$-projective dimension} of $M$ is the quantity
$$\xpd_R(M)=\inf\{\sup\{n\geq 0\mid X_n\neq 0\}\mid \text{$X$ is an
$\catx$-resolution of $M$}\}.$$
The modules of $\catx$-projective dimension 0 are
the nonzero modules of $\catx$.
We set
$$\finrescatx=
\text{the subcategory of $R$-modules $M$  with $\xpd_R(M)<\infty$.}$$
One checks easily that $\finrescatx$ is additive and contains $\catx$.
Following establised conventions, we set
$\pd_R(M)=\catpd{P}_R(M)$ and $\fd_R(M)=\catpd{F}_R(M)$.

The term \emph{$\caty$-coresolution} is defined dually.
The \emph{$\caty$-injective dimension} of $M$ is denoted $\yid_R(M)$,
and the \emph{augmented
$\caty$-coresolution} associated to a $\caty$-coresolution $Y$ is denoted $^+Y$.
We write ``injective resolution'' for
``$\cat{I}$-coresolution'', and we set
$$\fincorescaty=
\text{the subcategory of $R$-modules $N$ with $\yid_R(N)<\infty$}$$
which is additive and contains $\caty$.
\end{defn}

\begin{defn} \label{notation05}
A $\caty$-coresolution $Y$ is \emph{$\catx$-proper} if the
the augmented resolution $^+Y$ is $\hom_R(,-\catx)$-exact. We set
$$\propcorescaty=
\text{the subcategory of $R$-modules  admitting a $\caty$-proper
$\caty$-coresolution.}$$
One checks readily that $\propcorescaty$ is additive and contains
$\caty$. The term
\emph{$\caty$-proper $\catx$-resolution} is defined dually.
\end{defn}

\begin{defn} \label{defn0205}
An \emph{$\catx$-precover} of an $R$-module $M$ is an $R$-module homomorphism
$\vf\colon X\to M$ where $X\in\catx$ such that, for each $X'\in X$, the homomorphism
$\Hom_R(X',\vf)\colon\hom_R(X',X)\to\hom_R(X',M)$ is surjective.
An $\catx$-precover $\vf\colon X\to M$ is an \emph{$\catx$-cover}
if,  every endomorphism $f\colon X\to X$ such that $\vf=\vf f$
is an automorphism.
The terms \emph{preenvelope} and \emph{envelope}
are defined dually.
\end{defn}

The next three lemmata  have standard proofs;
see~\cite[proofs of (2.1) and (2.3)]{auslander:htmcma}.

\begin{lem} \label{perp03}
Let $0\to M_1\to M_2\to M_3\to 0$ be an exact sequence of $R$-modules.
\begin{enumerate}[\quad\rm(a)]
\item \label{perp03item1}
If $M_3\perp\catw$, then $M_1\perp\catw$ if and only if $M_2\perp\catw$.
If $M_1\perp\catw$  and  $M_2\perp\catw$,
then $M_3\perp\catw$
if and only if the given sequence is  $\hom_R(-,\catw)$-exact.
\item \label{perp03item2}
If $\catv\perp M_1$, then $\catv\perp M_2$ if and only if $\catv\perp M_3$.
If $\catv\perp M_2$  and  $\catv\perp M_3$,
then $\catv\perp M_1$
if and only if the given sequence is  $\hom_R(\catv,-)$-exact.
\item \label{perp03item3}
If $\tor^R_{\geq 1}(M_3, \catv)=0$, then $\tor^R_{\geq 1}(M_1, \catv)=0$ if and only if 
$\tor^R_{\geq 1}(M_2, \catv)=0$.
If $\tor^R_{\geq 1}(M_1, \catv)=0=\tor^R_{\geq 1}(M_2, \catv)$,
then $\tor^R_{\geq 1}(M_3, \catv)=0$
if and only if the given sequence is  $-\otimes_R\catv$-exact.
\qed
\end{enumerate}
\end{lem}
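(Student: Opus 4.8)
The statement to prove is Lemma~\ref{perp03}, which records three parallel facts about how the vanishing of $\ext$ or $\tor$ propagates along a short exact sequence. The plan is to treat each part via the appropriate long exact sequence, exactly as indicated by the reference to \cite[proofs of (2.1) and (2.3)]{auslander:htmcma}. I would dispatch all three items with the same two-move argument, so in practice I would prove \eqref{perp03item1} in full detail and then remark that \eqref{perp03item2} and \eqref{perp03item3} follow by the formally identical argument applied to the contravariant $\ext$ functor and to $\tor$, respectively (the latter using that $\tor$ is balanced and covariant in each slot, so no variance bookkeeping changes).

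For \eqref{perp03item1}: fix $W\in\catw$ and apply $\hom_R(-,W)$ to the sequence $0\to M_1\to M_2\to M_3\to 0$, obtaining the long exact sequence
$$\cdots\to\ext^i_R(M_3,W)\to\ext^i_R(M_2,W)\to\ext^i_R(M_1,W)\to\ext^{i+1}_R(M_3,W)\to\cdots$$
together with the low-degree piece $0\to\hom_R(M_3,W)\to\hom_R(M_2,W)\to\hom_R(M_1,W)\to\ext^1_R(M_3,W)$. For the first assertion, assume $M_3\perp\catw$, i.e.\ $\ext^{\geq1}_R(M_3,W)=0$ for all such $W$. Then for every $i\geq1$ the connecting map $\ext^{i}_R(M_1,W)\to\ext^{i+1}_R(M_3,W)=0$ is zero and $\ext^{i}_R(M_3,W)=0$ feeds in on the left, so $\ext^i_R(M_2,W)\xrightarrow{\cong}\ext^i_R(M_1,W)$; hence $M_1\perp\catw$ iff $M_2\perp\catw$. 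For the second assertion, assume $M_1\perp\catw$ and $M_2\perp\catw$. The long exact sequence then gives $\ext^i_R(M_3,W)\cong\ext^{i-1}_R(M_1,W)=0$ for all $i\geq2$ automatically, so the only obstruction to $M_3\perp\catw$ is $\ext^1_R(M_3,W)$, which sits in the exact four-term sequence $0\to\hom_R(M_3,W)\to\hom_R(M_2,W)\to\hom_R(M_1,W)\to\ext^1_R(M_3,W)\to\ext^1_R(M_2,W)=0$. Thus $\ext^1_R(M_3,W)=0$ for all $W\in\catw$ exactly when $\hom_R(M_2,W)\to\hom_R(M_1,W)$ is surjective for all $W\in\catw$, which is precisely the condition that the given sequence be $\hom_R(-,\catw)$-exact. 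That proves \eqref{perp03item1}.

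Items \eqref{perp03item2} and \eqref{perp03item3} are handled the same way: for \eqref{perp03item2} apply $\hom_R(V,-)$ for $V\in\catv$ to get the long exact sequence $\cdots\to\ext^i_R(V,M_1)\to\ext^i_R(V,M_2)\to\ext^i_R(V,M_3)\to\ext^{i+1}_R(V,M_1)\to\cdots$ with the low-degree tail $0\to\hom_R(V,M_1)\to\hom_R(V,M_2)\to\hom_R(V,M_3)\to\ext^1_R(V,M_1)$; assuming $\catv\perp M_1$ forces $\ext^i_R(V,M_2)\cong\ext^i_R(V,M_3)$ for $i\geq1$, and assuming $\catv\perp M_2$, $\catv\perp M_3$ leaves only $\ext^1_R(V,M_1)$, which vanishes for all $V$ iff $\hom_R(V,M_2)\to\hom_R(V,M_3)$ is onto for all $V$, i.e.\ iff the sequence is $\hom_R(\catv,-)$-exact. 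For \eqref{perp03item3} apply $-\otimes_R V$ and use the homology long exact sequence $\cdots\to\tor^R_i(M_1,V)\to\tor^R_i(M_2,V)\to\tor^R_i(M_3,V)\to\tor^R_{i-1}(M_1,V)\to\cdots$ ending in $\tor^R_1(M_3,V)\to M_1\otimes_R V\to M_2\otimes_R V\to M_3\otimes_R V\to 0$; assuming $\tor^R_{\geq1}(M_3,V)=0$ forces $\tor^R_i(M_1,V)\cong\tor^R_i(M_2,V)$ for $i\geq1$, and assuming $\tor^R_{\geq1}(M_1,V)=0=\tor^R_{\geq1}(M_2,V)$ leaves only $\tor^R_1(M_3,V)$, which vanishes for all $V$ iff $M_1\otimes_R V\to M_2\otimes_R V$ is injective for all $V$, i.e.\ iff the sequence is $-\otimes_R\catv$-exact. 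There is no real obstacle here; the only point requiring a little care is making sure the ``for all $W$/$V$'' quantifier is kept outside the iff in the second half of each item, since $\hom_R(-,\catw)$-exactness (etc.) is a statement about all members of the subcategory simultaneously, and matching it against the vanishing of the relevant first $\ext$/$\tor$ for all members is exactly what the four-term exact sequence delivers.
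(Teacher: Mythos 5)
Your proof is correct and is exactly the standard long-exact-sequence argument that the paper invokes by citing the proofs of (2.1) and (2.3) in Auslander--Buchweitz rather than writing it out. The quantifier care you note at the end (vanishing of the first $\ext$ or $\tor$ for all objects of the subcategory versus exactness of the Hom/tensor sequences for all such objects) is handled properly, so nothing is missing.
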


\begin{lem} \label{gencat01}
If $\catx\perp\caty$, then  $\catx\perp\finrescaty$ and $\fincorescatx\perp\caty$.
\qed
\end{lem}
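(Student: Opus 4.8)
The plan is to prove both assertions by induction on resolution (resp.\ coresolution) length, feeding the inductive step through the long exact sequence in $\ext$.

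For the inclusion $\catx\perp\finrescaty$, fix $X\in\catx$ and $M\in\finrescaty$ and set $n=\ypd_R(M)$, so that there is an exact sequence $0\to Y_n\to\cdots\to Y_1\to Y_0\to M\to 0$ with each $Y_j\in\caty$. I induct on $n$. If $n=0$ then $M$ is zero or lies in $\caty$, so $\ext^{\geq 1}_R(X,M)=0$ by the hypothesis $\catx\perp\caty$. If $n\geq 1$, put $K=\ker(Y_0\to M)$; the exact sequence $0\to K\to Y_0\to M\to 0$ shows $\ypd_R(K)\leq n-1$, so the inductive hypothesis yields $X\perp K$. Since $Y_0\in\caty$ gives $X\perp Y_0$ as well, the portion $\ext^i_R(X,Y_0)\to\ext^i_R(X,M)\to\ext^{i+1}_R(X,K)$ of the long exact sequence in $\ext_R(X,-)$ has both outer terms zero for every $i\geq 1$, whence $\ext^i_R(X,M)=0$. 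As $X\in\catx$ was arbitrary, $\catx\perp M$, and since $M$ was arbitrary, $\catx\perp\finrescaty$.

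The statement $\fincorescatx\perp\caty$ is handled dually: fix $N\in\fincorescatx$ and $Y\in\caty$, set $n=\xid_R(N)$, and choose an exact sequence $0\to N\to X^0\to\cdots\to X^n\to 0$ with each $X^j\in\catx$. Induct on $n$, the case $n=0$ being the hypothesis $\catx\perp\caty$. For $n\geq 1$, let $C=\coker(N\to X^0)$, so $0\to N\to X^0\to C\to 0$ is exact with $\xid_R(C)\leq n-1$; applying $\Hom_R(-,Y)$ and using $C\perp\caty$ (induction) together with $X^0\perp\caty$ (hypothesis), the segment $\ext^i_R(X^0,Y)\to\ext^i_R(N,Y)\to\ext^{i+1}_R(C,Y)$ forces $\ext^i_R(N,Y)=0$ for all $i\geq 1$. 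Hence $N\perp\caty$, and therefore $\fincorescatx\perp\caty$.

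There is no serious obstacle here; the proof is routine, as the authors indicate. The only points requiring a little care are the index bookkeeping in the long exact sequences --- in particular, observing that the connecting maps land in $\ext^{i+1}$ and that the inductive hypothesis $X\perp K$ (meaning $\ext^{\geq 1}_R(X,K)=0$) covers exactly these terms for $i\geq 1$ --- and the standard observation that a syzygy of a module in $\finrescaty$ (resp.\ a cosyzygy of a module in $\fincorescatx$) again lies in $\finrescaty$ (resp.\ $\fincorescatx$) with strictly smaller dimension, which is immediate from the defining exact sequences.
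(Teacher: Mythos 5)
Your proof is correct and follows exactly the standard dimension-shifting argument the paper has in mind (it cites the proofs of (2.1) and (2.3) in Auslander--Buchweitz rather than writing them out): induction on the length of the (co)resolution, peeling off one (co)syzygy and using the long exact sequence in $\ext$. No gaps; the index bookkeeping and the reduction of the (co)resolution length are handled correctly.
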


\begin{lem} \label{lem0216}
Let $X$ be an exact $R$-complex.
\begin{enumerate}[\quad\rm(a)]
\item \label{lem0216a}
Assume $X_i\perp\catv$ for all $i$. If $X$ is $\hom_R(-,\catv)$-exact,
then $\ker(\partial^X_i)\perp\catv$ for all $i$.
Conversely, if $\ker(\partial^X_i)\perp\catv$ for all $i$
or if $X_i=0$ for all $i\ll 0$, then $X$ is $\hom_R(-,\catv)$-exact.
\item \label{lem0216b}
Assume $\catv \perp X_i$ for all $i$. If $X$ is $\hom_R(\catv,-)$-exact,
then $\catv\perp\ker(\partial^X_i)$ for all $i$.
Conversely, if $\catv\perp\ker(\partial^X_i)$ for all $i$
or if $X_i=0$ for all $i\gg 0$, then $X$ is $\hom_R(\catv,-)$-exact.
\item \label{lem0216c}
Assume $\tor^R_{\geq 1}(X_i,\catv)=0$ for all $i$. If the complex
$X$ is $-\otimes_R\catv$-exact,
then $\tor^R_{\geq 1}(\ker(\partial^X_i),\catv)=0$ for all $i$.
Conversely, if $\tor^R_{\geq 1}(\ker(\partial^X_i),\catv)=0$ for all $i$
or if $X_i=0$ for all $i\ll 0$, then $X$ is $-\otimes_R\catv$-exact.
\qed
\end{enumerate}
\end{lem}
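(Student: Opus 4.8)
The plan is to prove Lemma~\ref{lem0216} by treating the three parts in parallel, since each is the same assertion with $\hom_R(-,\catv)$, $\hom_R(\catv,-)$, and $-\otimes_R\catv$ in place of one another; I will write out part~\eqref{lem0216a} and indicate that the other two follow \emph{mutatis mutandis} by dualizing (for~\eqref{lem0216b}) and by the analogous Tor-argument (for~\eqref{lem0216c}), invoking Lemma~\ref{perp03} in the form appropriate to each setting. The core device is the standard decomposition of the exact complex $X$ into short exact sequences of the ``syzygy'' type: for each $i$ set $K_i=\ker(\partial^X_i)$, so that exactness of $X$ gives short exact sequences
\begin{equation} \label{eqn-syzygy}
0\to K_i\to X_i\to K_{i-1}\to 0.
\end{equation}

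First I would prove the converse directions, as they are needed to set up the forward direction and are slightly more elementary. Assuming $K_i\perp\catv$ for all $i$, each sequence~\eqref{eqn-syzygy} has all three terms right-orthogonal to $\catv$ (using $X_i\perp\catv$), so by Lemma~\ref{perp03}\eqref{perp03item1} each~\eqref{eqn-syzygy} is $\hom_R(-,\catv)$-exact; splicing these short exact sequences back together shows $\hom_R(X,V)$ is exact for every $V\in\catv$, i.e. $X$ is $\hom_R(-,\catv)$-exact. For the alternative hypothesis $X_i=0$ for all $i\ll 0$: then the complex is bounded below, so $\hom_R(X,V)$ is bounded above, and a degreewise induction starting from the bottom shows each $K_i\perp\catv$ (the bottom syzygy is a submodule of a module in $\catv^{\perp}$ sitting in a sequence that is automatically exact because $X$ terminates), after which the previous argument applies; alternatively, a bounded-below exact complex is always $\hom_R(-,\catv)$-exact by a direct diagram chase, which is the cleanest route. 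For the forward direction, assume $X$ is $\hom_R(-,\catv)$-exact and $X_i\perp\catv$ for all $i$; I would show $K_i\perp\catv$ by descending induction on the ``depth'' within dimension shifting: from~\eqref{eqn-syzygy}, $\ext^{j}_R(K_{i-1},V)\cong\ext^{j+1}_R(K_i,V)$ for $j\geq 1$ (using $X_i\perp\catv$), and $\ext^1_R(K_{i-1},V)$ is controlled by the cokernel of $\hom_R(X_i,V)\to\hom_R(K_i,V)$, which vanishes precisely because $\hom_R(X,V)$ is exact in the relevant degree. Iterating this dimension shift identifies $\ext^{j}_R(K_{i-1},V)$ with $\ext^{1}_R(K_{i-j},V)=0$ for each $j\geq 1$, giving $K_{i-1}\perp\catv$, hence $K_i\perp\catv$ for all $i$.

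For part~\eqref{lem0216b} the argument is the formal dual, replacing $\hom_R(-,\catv)$-exactness by $\hom_R(\catv,-)$-exactness, using Lemma~\ref{perp03}\eqref{perp03item2} in place of~\eqref{perp03item1}, exchanging the roles of kernels and cokernels in the dimension shift $\ext^{j+1}_R(V,K_i)\cong\ext^{j}_R(V,K_{i-1})$, and using the hypothesis $X_i=0$ for $i\gg 0$ (bounded above) in place of bounded below. For part~\eqref{lem0216c} one repeats the pattern with $\tor^R_{\geq1}(-,\catv)$ in place of $\ext^{\geq1}_R(-,\catv)$, Lemma~\ref{perp03}\eqref{perp03item3} in place of~\eqref{perp03item1}, the dimension shift $\tor^R_{j+1}(K_{i-1},\catv)\cong\tor^R_j(K_i,\catv)$, and the hypothesis $X_i=0$ for $i\ll0$; here $\tor^R_1(K_{i-1},\catv)$ is the kernel of $K_i\otimes_R V\to X_i\otimes_R V$, which vanishes exactly when $X\otimes_R V$ is exact in that degree.

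The main obstacle is purely bookkeeping rather than conceptual: one must be careful that the dimension-shifting isomorphisms require the relevant $\ext$ or $\tor$ of the $X_i$ to vanish in \emph{all} positive degrees (which is exactly the standing hypothesis $X_i\perp\catv$, $\catv\perp X_i$, or $\tor^R_{\geq1}(X_i,\catv)=0$), and that in the unbounded case one genuinely needs the orthogonality/exactness of the syzygies as an input rather than deriving it for free — this is why the statement offers the two alternative hypotheses (``$\ker(\partial^X_i)$ orthogonal for all $i$'' versus ``$X_i=0$ for $i\ll0$'' or ``$i\gg0$''), and I would make sure the write-up keeps these two cases visibly separate. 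Since the excerpt explicitly says these lemmata ``have standard proofs,'' I would keep the exposition terse, proving~\eqref{lem0216a} in a few lines and dispatching~\eqref{lem0216b} and~\eqref{lem0216c} with ``the proof is dual'' and ``the proof is analogous, using Tor in place of Ext,'' respectively.
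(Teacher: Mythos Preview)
The paper does not actually give a proof of this lemma: it is stated with a terminal \qed\ and grouped with Lemmas~\ref{perp03} and~\ref{gencat01} under the remark that they ``have standard proofs; see~\cite[proofs of (2.1) and (2.3)]{auslander:htmcma}.'' Your proposal is exactly the standard argument that remark points to --- break $X$ into the short exact sequences $0\to K_i\to X_i\to K_{i-1}\to 0$, use Lemma~\ref{perp03} on each piece, and dimension-shift --- so there is nothing to compare against and your approach is correct. One small bookkeeping slip to fix when you write it up: from $0\to K_i\to X_i\to K_{i-1}\to 0$ the long exact sequence gives $\ext^{j}_R(K_i,V)\cong\ext^{j+1}_R(K_{i-1},V)$ for $j\geq 1$ (not the reversed isomorphism you wrote), so the iteration sends $\ext^{j}_R(K_{i-1},V)$ to $\ext^{1}_R(K_{i+j-2},V)$ rather than to $\ext^{1}_R(K_{i-j},V)$; this does not affect the argument, since you have already shown $\ext^{1}_R(K_m,V)=0$ for every $m$.
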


A careful reading of the proofs of~\cite[(2.1),(2.2)]{sather:sgc}
yields the next  result.

\begin{lem} \label{lem0602}
Assume that $\catw$ is an injective cogenerator for $\catx$.
If $M$ has an $\catx$-coresolution that is $\catw$-proper and $M\perp\catw$, then
$M$ is in $\propcorescatw$.
\qed
\end{lem}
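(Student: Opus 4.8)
The plan is to take an $\catx$-coresolution $Y$ of $M$ that is $\catw$-proper and surgically replace it with a $\catw$-coresolution that is again $\catw$-proper. Write the augmented coresolution as
$$0\to M\to Y^0\xra{\partial^0} Y^1\xra{\partial^1}Y^2\to\cdots$$
with each $Y^i\in\catx$, and recall that $\catw$-proper means this sequence stays exact after applying $\hom_R(-,W)$ for every $W\in\catw$. Since $\catw$ is an injective cogenerator for $\catx$, each $Y^i$ sits in a short exact sequence $0\to Y^i\to W^i\to Z^i\to 0$ with $W^i\in\catw$ and $Z^i\in\catx$; because $\catx\perp\catw$, these sequences are automatically $\hom_R(-,\catw)$-exact by Lemma~\ref{perp03}\eqref{perp03item1}. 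The first step is to splice: using the hypothesis $M\perp\catw$ together with $Y^0\perp\catw$, apply $\hom_R(-,\catw)$ to the two short exact sequences $0\to M\to Y^0\to C^0\to 0$ (where $C^0=\coker(M\to Y^0)$, which lies in $\catx$ since $\catx$ is closed under cokernels of monomorphisms — this is part of what ``injective cogenerator'' buys us via the standard Auslander--Buchweitz machinery) and $0\to Y^0\to W^0\to Z^0\to 0$, and use the horseshoe-type splicing to produce an exact sequence $0\to M\to W^0\to E^1\to 0$ where $E^1$ is built as an extension of $C^0$ by $Z^0$; since $\catx$ is closed under extensions, $E^1\in\catx$.

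The second step is to iterate. Having replaced the front of the coresolution with a genuine $\catw$-module $W^0$, one repeats the construction on $E^1\in\catx$, again invoking that $\catw$ is an injective cogenerator, to peel off $W^1\in\catw$, and so on. This yields an augmented complex
$$0\to M\to W^0\to W^1\to W^2\to\cdots$$
with every $W^i\in\catw$. The key point to verify is that this new complex is exact and is $\catw$-proper. Exactness follows inductively from the exact sequences $0\to E^i\to W^i\to E^{i+1}\to 0$ by the usual diagram chase (splicing short exact sequences). For $\catw$-properness, one must check that applying $\hom_R(-,W)$ for $W\in\catw$ keeps everything exact: each constituent short exact sequence $0\to E^i\to W^i\to E^{i+1}\to 0$ has $E^i\perp\catw$ and $W^i\perp\catw$ (the former because $E^i\in\catx$ and $\catx\perp\catw$), so Lemma~\ref{perp03}\eqref{perp03item1} — or rather the characterization of $\hom_R(-,\catw)$-exactness in terms of the $\perp$ condition on kernels, which is exactly Lemma~\ref{lem0216}\eqref{lem0216a} applied to the spliced complex — shows the whole complex stays exact under $\hom_R(-,W)$. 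Thus $M$ admits a $\catw$-proper $\catw$-coresolution, i.e. $M\in\propcorescatw$.

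I expect the main obstacle to be the bookkeeping in the splicing step: one has to be careful that at each stage the ``next'' module $E^{i+1}$ genuinely lies in $\catx$ (using closure under extensions and cokernels of monomorphisms, both of which are consequences of $\catw$ being an injective cogenerator for $\catx$ in the Auslander--Buchweitz sense), and that the $\hom_R(-,\catw)$-exactness is preserved — the subtlety being that $\catw$-properness of the \emph{original} $\catx$-coresolution $Y$ is what makes the connecting maps behave, while the freshly introduced sequences $0\to Y^i\to W^i\to Z^i\to 0$ are $\hom_R(-,\catw)$-exact for the independent reason that $\catx\perp\catw$. Once these two sources of exactness are correctly combined via Lemma~\ref{lem0216}\eqref{lem0216a}, the result drops out; this is precisely why the statement says ``a careful reading of the proofs of~\cite[(2.1),(2.2)]{sather:sgc}'' suffices, since that is the context in which this dévissage was first carried out.
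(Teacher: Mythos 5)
Your construction breaks down at the point where you place the cosyzygies inside $\catx$. You assert that $C^0=\coker(M\to Y^0)$ lies in $\catx$ because ``$\catx$ is closed under cokernels of monomorphisms'' and that $E^1\in\catx$ because ``$\catx$ is closed under extensions,'' claiming both come for free from the injective cogenerator hypothesis. Neither closure property is part of the hypothesis: in this paper ``$\catw$ is an injective cogenerator for $\catx$'' means only that each $X\in\catx$ embeds in some $W\in\catw$ with cokernel in $\catx$ and that $\catx\perp\catw$ (Definition~\ref{notation01a}); no closure of $\catx$ under extensions or cokernels of monomorphisms is assumed, and in the place where Lemma~\ref{lem0602} is actually invoked (Lemma~\ref{lem0506}, with $\catx=\cathc(\catfc)=\catgfc\cap\catbc$ and $\catw=\catfccot$) closure under cokernels of monomorphisms is not available, since $\catgfc$ is only known to be closed under kernels of epimorphisms, extensions and summands. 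Since your iteration applies the cogenerator property directly to $E^1$, and $E^1$ need not lie in $\catx$, the second step of your induction has no justification; it is also telling that your argument never really uses the $\catw$-properness of the given $\catx$-coresolution, which is exactly the hypothesis the correct proof needs.

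The repair, which is the route of the cited proofs, is to keep the cosyzygies of the \emph{original} coresolution in play rather than trying to force them into $\catx$. From $M\perp\catw$, $Y^i\perp\catw$ and the assumed $\hom_R(-,\catw)$-exactness, Lemma~\ref{lem0216}\eqref{lem0216a} gives $C^i\perp\catw$ for every cosyzygy $C^i$ of $Y$. One then builds the $\catw$-coresolution of $M$ degreewise: embed $Y^0$ into $W^0\in\catw$ with cokernel in $\catx$ and set $E^1=W^0/M$, which sits in an exact sequence with outer terms $C^0$ and a module of $\catx$, hence $E^1\perp\catw$ by Lemma~\ref{perp03}\eqref{perp03item1}; at the next stage one does \emph{not} apply the cogenerator property to $E^1$, but instead extends the composite $C^0\hookrightarrow Y^1\hookrightarrow W^1$ to a map $E^1\to W^1$ using $\ext^1_R(E^1/C^0,W^1)=0$ (the quotient is an extension of modules of $\catx$ and $\catx\perp\catw$), and adds a $\catw$-summand receiving an embedding of $E^1/C^0$ to make the resulting map $E^1\to W^1\oplus W'$ injective. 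Bookkeeping shows each new cosyzygy is again an iterated extension of the next $C^i$ by modules of $\catx$, so it is $\perp\catw$ and the process continues; the resulting augmented $\catw$-coresolution is then $\hom_R(-,\catw)$-exact by Lemma~\ref{lem0216}\eqref{lem0216a}, giving $M\in\propcorescatw$. Only modules that genuinely lie in $\catx$ are ever fed to the cogenerator property, which is the point your shortcut misses.
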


\section{Categories of Interest}\label{sec02}

This section contains definitions of
and basic facts about the categories to be investigated in this paper.

\begin{defn} \label{defn0203}
An $R$-module $M$ is \emph{cotorsion} if $\catf\perp M$.
We set
$$
\catfcot=\text{the subcategory of  flat cotorsion $R$-modules.}
$$
\end{defn}

\begin{defn} \label{defn0204}
The \emph{Pontryagin dual} or \emph{character module}
of an $R$-module $M$ is the $R$-module
$M^*=\Hom_{\bbz}(M,\bbq/\bbz)$.
\end{defn}

One implication in the following lemma is from~\cite[(3.1.4)]{xu:fcm},
and the others are established similarly.

\begin{lem} \label{lem0202}
Let $M$ be an $R$-module.
\begin{enumerate}[\quad\rm(a)]
\item \label{lem0202a}
The Pontryagin dual $M^*$ is $R$-flat if and only if $M$ is $R$-injective.
\item \label{lem0202b}
The Pontryagin dual $M^*$ is $R$-injective if and only if $M$ is $R$-flat. 
\qed
\end{enumerate}
\end{lem}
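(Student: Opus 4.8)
The plan is to reduce all four implications to Baer's criterion for injectivity and the analogous criterion for flatness, testing against the modules $R/\fa$ as $\fa$ ranges over the ideals of $R$, together with the Hom-tensor adjunction relating $\hom_R(-,M^*)$ and $\pdual{-\otimes_R M}$ and the fact that $\qmodz$ is an injective cogenerator for abelian groups. One of the implications is \cite[(3.1.4)]{xu:fcm}, and the bookkeeping below supplies the other three.

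First I would record the preliminary facts. Since $\qmodz$ is $\bbz$-injective, the functor $\pdual{-}$ is exact, and since $\qmodz$ also cogenerates abelian groups, $\pdual{-}$ is faithful, so an $R$-module $N$ is zero whenever $N^*$ is. Next, applying the adjunction isomorphism $\hom_R(N,\pdual{M})\cong\pdual{N\otimes_R M}$ to a projective resolution of $N$ and using exactness of $\pdual{-}$ gives a natural isomorphism $\ext^i_R(N,M^*)\cong(\tor^R_i(N,M))^*$ for every $i\geq 0$ and every pair of $R$-modules $N,M$, with no finiteness hypothesis. When $N$ is finitely generated, the noetherian hypothesis on $R$ lets us take the resolution to consist of finitely generated free modules, and for such a module $P$ the natural map $P\otimes_R M^*\to\pdual{\hom_R(P,M)}$ is an isomorphism; taking homology then yields $\tor^R_i(N,M^*)\cong(\ext^i_R(N,M))^*$ for all $i\geq 0$ whenever $N$ is finitely generated. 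Finally I would invoke the two classical criteria: $L$ is flat if and only if $\tor^R_1(R/\fa,L)=0$ for every ideal $\fa$, and $L$ is injective if and only if $\ext^1_R(R/\fa,L)=0$ for every ideal $\fa$.

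With these in place, part~\eqref{lem0202b} is immediate: if $M$ is flat then $\ext^1_R(R/\fa,M^*)\cong(\tor^R_1(R/\fa,M))^*=0$ for every $\fa$, so $M^*$ is injective by Baer; conversely, if $M^*$ is injective then $(\tor^R_1(R/\fa,M))^*\cong\ext^1_R(R/\fa,M^*)=0$, whence $\tor^R_1(R/\fa,M)=0$ by faithfulness of $\pdual{-}$, so $M$ is flat. Part~\eqref{lem0202a} is dual, now using the second isomorphism recorded above applied to the finitely generated module $N=R/\fa$: if $M$ is injective then $\tor^R_1(R/\fa,M^*)\cong(\ext^1_R(R/\fa,M))^*=0$ for every $\fa$, so $M^*$ is flat; and if $M^*$ is flat then $(\ext^1_R(R/\fa,M))^*\cong\tor^R_1(R/\fa,M^*)=0$, hence $\ext^1_R(R/\fa,M)=0$ by faithfulness, so $M$ is injective by Baer.

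The one step that is not purely formal, and hence the main obstacle, is the isomorphism $\tor^R_i(N,M^*)\cong(\ext^i_R(N,M))^*$ needed for part~\eqref{lem0202a}: unlike its companion $\ext^i_R(N,M^*)\cong(\tor^R_i(N,M))^*$, it fails for general $N$ and must be obtained by replacing the projective resolution of $N=R/\fa$ by one consisting of finitely generated frees — precisely the input the noetherian hypothesis supplies — so that $-\otimes_R M^*$ agrees with $\pdual{\hom_R(-,M)}$ termwise. Everything else — exactness and faithfulness of $\pdual{-}$, the Hom-tensor adjunction, Baer's criterion, and the flatness criterion — is standard and would be used as a black box.
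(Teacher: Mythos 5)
Your argument is correct, and it is essentially the standard one underlying the paper's treatment: the paper gives no details beyond citing \cite[(3.1.4)]{xu:fcm} and saying the remaining implications are ``established similarly,'' and the proof behind that citation is precisely your combination of the duality isomorphisms $\ext^i_R(N,M^*)\cong\pdual{\tor^R_i(N,M)}$ and, for finitely generated $N$ over the noetherian ring $R$, $\tor^R_i(N,M^*)\cong\pdual{\ext^i_R(N,M)}$, together with Baer's criterion, the Tor-criterion for flatness, and faithful exactness of $\pdual{-}$. You have also located the noetherian hypothesis in the right place (only part~\eqref{lem0202a} needs it), so nothing further is required.
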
 

Semidualizing modules, defined next, form the basis for our categories of interest.

\begin{defn} \label{defn0201}
A finitely generated $R$-module $C$ is \emph{semidualzing} 
if 
the natural homothety morphism
$R\to \Hom_R(C,C)$ is an isomorphism and
$\ext^{\geq 1}_R(C,C)=0$.
An $R$-module $D$ is \emph{dualizing} if it is 
semidualizing and has finite injective dimension.

Let $C$ be a semidualizing $R$-module.
We set
\begin{align*}
\catpc&=\text{the subcategory of modules $P\otimes_R C$ where $P$ is $R$-projective}\\
\catfc&=\text{the subcategory of modules $F\otimes_R C$ where $F$ is $R$-flat}\\
\catfccot&=\text{the subcategory of modules $F\otimes_R C$ where $F$ is flat and cotorsion} \\
\catic&=\text{the subcategory of modules $\Hom_R(C,I)$ where $I$ is $R$-injective.}
\end{align*}
Modules in $\catpc$, $\catfc$, $\catfccot$ and $\catic$ are called \emph{$C$-projective},
\emph{$C$-flat}, \emph{$C$-flat $C$-cotorsion},
and \emph{$C$-injective}, respectively.
An $R$-module $M$ is \emph{$C$-cotorsion} if $\catfc\perp M$.
\end{defn}

\begin{disc} \label{defn0401}
We justify the terminology ``$C$-flat $C$-cotorsion'' in Lemma~\ref{lem0201}
where we show that $M$ is $C$-flat $C$-cotorsion if and only if it is
$C$-flat and $C$-cotorsion.
\end{disc}

The following categories
were introduced  by Foxby~\cite{foxby:gdcmr},
Avramov and Foxby~\cite{avramov:rhafgd},
and Christensen~\cite{christensen:scatac}, thought the idea 
goes at least back to Vasconcelos~\cite{vasconcelos:dtmc}.

\begin{defn} \label{notation08d}
Let $C$ be a
semidualizing $R$-module.  
The \emph{Auslander class} of $C$ is the subcategory $\catac$
of $R$-modules $M$ such that 
\begin{enumerate}[\quad(1)]
\item $\tor^R_{\geq 1}(C,M)=0=\ext_R^{\geq 1}(C,C\otimes_R M)$, and
\item The natural map $M\to\Hom_R(C,C\otimes_R M)$ is an isomorphism.
\end{enumerate}
The \emph{Bass class} of $C$ is the subcategory $\catbc$
of $R$-modules $M$ such that 
\begin{enumerate}[\quad(1)]
\item $\ext_R^{\geq 1}(C,M)=0=\tor^R_{\geq 1}(C,\Hom_R(C,M))$, and 
\item The natural evaluation map $C\otimes_R\Hom_R(C,M)\to M$ is an isomorphism.
\end{enumerate}
\end{defn}

\begin{fact}\label{projac}
Let $C$ be a
semidualizing $R$-module.  
The categories $\catac$ and $\catbc$ are closed under extensions,
kernels of epimorphisms and cokernels of monomorphism; see~\cite[Cor.\ 6.3]{holm:fear}.
The category $\catac$ contains all modules of finite
flat dimension and those of finite 
$\caticc$-injective dimension, and the category $\catbc$ contains all modules of
finite injective dimension and those of finite 
$\catfcc$-projective dimension
by~\cite[Cors.\ 6.1 and 6.2]{holm:fear}.

Arguing as in~\cite[(3.2.9)]{christensen:gd}, we 
see that
$M\in\catac$ if and only if $M^*\in\catbc$,
and
$M\in\catbc$ if and only if $M^*\in\catac$.
Similarly, we have $M\in\catbc$ if and only if $\hom_R(C,M)\in\catac$
by~\cite[(2.8.a)]{takahashi:hasm}.
From~\cite[Thm.\ 6.1]{holm:fear} we know that every module in $\catbc$
has a $\catpcc$-proper
$\catpcc$-resolution.  
\end{fact}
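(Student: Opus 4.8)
I would dispatch the five assertions in turn; each rests on a standard mechanism, and below I indicate the mechanism and where the one genuinely delicate point lies.

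\emph{Closure of $\catac$ and $\catbc$.} I would run the pair of long exact sequences produced from a short exact sequence $0\to M_1\to M_2\to M_3\to 0$ by applying $C\otimes_R-$ and then $\hom_R(C,-)$. First one observes that if two of the $M_j$ satisfy $\tor^R_{\geq 1}(C,-)=0$, then so does the third in each of the three configurations (extension, kernel of an epimorphism, cokernel of a monomorphism), directly from the long exact sequence in $\tor^R_\ast(C,-)$. Given this, the sequence $0\to C\otimes_R M_1\to C\otimes_R M_2\to C\otimes_R M_3\to 0$ is exact, so the long exact sequence in $\ext^\ast_R(C,-)$ supplies the required vanishing $\ext^{\geq 1}_R(C,C\otimes_R M)=0$ for the third module, and a $3\times 3$ diagram of the natural maps $M_j\to\hom_R(C,C\otimes_R M_j)$ together with the five lemma promotes the remaining one to an isomorphism. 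The argument for $\catbc$ is dual: read $\ext$ before $\tor$ and use the evaluation map $C\otimes_R\hom_R(C,M)\to M$ in place of $M\to\hom_R(C,C\otimes_R M)$. These are the computations of \cite[Cor.\ 6.3]{holm:fear}.

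\emph{Finite flat / $\caticc$-injective dimension in $\catac$ (dually for $\catbc$).} I would first settle the base cases: a flat module lies in $\catac$ (by a direct check --- since $C$ is finitely generated over the noetherian $R$, $\catac$ is closed under the direct sums and filtered colimits presenting flats in terms of finite free modules, on which the three defining conditions hold because $R\to\hom_R(C,C)$ is an isomorphism and $\ext^{\geq 1}_R(C,C)=0$), and a $C$-injective module $\hom_R(C,I)$ lies in $\catac$ by a short computation from the definition of a semidualizing module. Then I would induct on $\fdim_R(M)$ (resp.\ on the $\caticc$-injective dimension of $M$) along a short exact sequence whose outer terms are a flat (resp.\ $C$-injective) module and a module of strictly smaller dimension, invoking the closure properties from the previous step. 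The Bass-class statements are dual, with injective and $C$-projective modules as base cases; all of this is \cite[Cors.\ 6.1 and 6.2]{holm:fear}.

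\emph{Pontryagin duality and $M\in\catbc\iff\hom_R(C,M)\in\catac$.} The key identities I would use are $(C\otimes_R M)^\ast\cong\hom_R(C,M^\ast)$ (Hom--tensor adjunction), together with $\tor^R_i(C,M)^\ast\cong\ext^i_R(C,M^\ast)$ and $\tor^R_i(C,N^\ast)\cong\ext^i_R(C,N)^\ast$, all obtained by applying the exact and faithful functor $(-)^\ast$ to a projective resolution of the finitely generated module $C$ (using that $\qmodz$ is an injective cogenerator over $\bbz$). Under these, the $\tor$- and $\ext$-vanishing defining $\catac$ for $M$ translate termwise into the $\ext$- and $\tor$-vanishing defining $\catbc$ for $M^\ast$, and the natural map $M\to\hom_R(C,C\otimes_R M)$ dualizes, via the same identifications, to the evaluation map $C\otimes_R\hom_R(C,M^\ast)\to M^\ast$; since $(-)^\ast$ reflects isomorphisms, $M\in\catac$ gives $M^\ast\in\catbc$ and conversely --- this is the argument of \cite[(3.2.9)]{christensen:gd}. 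For \cite[(2.8.a)]{takahashi:hasm} I would substitute $N=\hom_R(C,M)$ into the definition of $\catac$: when $M\in\catbc$ the evaluation isomorphism $C\otimes_R N\cong M$ rewrites each of the three conditions ``$N\in\catac$'' as the matching condition ``$M\in\catbc$'', and the converse runs the same identifications in reverse.

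\emph{Proper $\catpcc$-resolutions for $M\in\catbc$ --- the main obstacle.} This is the one point needing more than long-exact-sequence bookkeeping, and it is \cite[Thm.\ 6.1]{holm:fear}; I would argue via Foxby equivalence. Given $M\in\catbc$, set $N=\hom_R(C,M)$, which lies in $\catac$ by the previous paragraph; choose a projective resolution $P\xra{\simeq}N$ and apply $C\otimes_R-$ to obtain $C\otimes_R P\to C\otimes_R N\cong M$. Each $C\otimes_R P_i$ is $C$-projective, and the augmented complex is exact because $N\in\catac$ forces $\tor^R_{\geq 1}(C,N)=0$, so this is a $\catpcc$-resolution of $M$. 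Properness amounts to exactness of $\hom_R(C\otimes_R Q,-)$ on the augmented complex for every projective $Q$; using the adjunction $\hom_R(C\otimes_R Q,-)\cong\hom_R(Q,\hom_R(C,-))$ and exactness of $\hom_R(Q,-)$, it suffices that $\hom_R(C,-)$ carries the augmented complex to an exact one, and applying $\hom_R(C,-)$ identifies $C\otimes_R P\to M$ with the original resolution $P\to N$ (via $\hom_R(C,C\otimes_R P_i)\cong P_i$ for $P_i$ projective, together with $\hom_R(C,M)=N$ and $C\otimes_R N\cong M$). I expect this Foxby-equivalence step --- specifically, verifying properness rather than just exactness of the candidate resolution --- to be the main obstacle; everything else is diagram chasing with the long exact sequences in $\tor$ and $\ext$.
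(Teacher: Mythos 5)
This Fact is not proved in the paper at all---the paper simply cites \cite[Cors.\ 6.1--6.3, Thm.\ 6.1]{holm:fear}, \cite[(3.2.9)]{christensen:gd} and \cite[(2.8.a)]{takahashi:hasm}---so your sketch has to stand on its own, and its first step contains a genuine gap. You assert that for a short exact sequence $0\to M_1\to M_2\to M_3\to 0$ the vanishing $\tor^R_{\geq 1}(C,-)=0$ passes to the third module ``directly from the long exact sequence'' in all three configurations. That fails in exactly the configuration that carries the content of the cited closure result: for cokernels of monomorphisms (the $\catac$ case; dually, kernels of epimorphisms for $\catbc$) the long exact sequence only gives $\tor^R_{\geq 2}(C,M_3)=0$, while $\tor^R_1(C,M_3)\cong\ker(C\otimes_R M_1\to C\otimes_R M_2)$, and its vanishing is \emph{equivalent} to exactness of the tensored sequence---precisely the caveat recorded in the paper's own Lemma~\ref{perp03}\eqref{perp03item3}. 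Everything after that sentence (``Given this, the sequence $0\to C\otimes_RM_1\to C\otimes_RM_2\to C\otimes_RM_3\to 0$ is exact\dots'') presupposes the point at issue, and since your induction for modules of finite flat dimension invokes exactly this closure property, the gap propagates there too. Closing it needs an extra idea, for instance: because the natural maps $M_i\to\hom_R(C,C\otimes_RM_i)$ ($i=1,2$) are isomorphisms and $M_1\to M_2$ is injective, the induced map $\hom_R(C,C\otimes_RM_1)\to\hom_R(C,C\otimes_RM_2)$ is injective, so $\hom_R(C,\tor^R_1(C,M_3))=0$; one then concludes $\tor^R_1(C,M_3)=0$ from the faithfulness of $\hom_R(C,-)$ on $R$-modules (every nonzero module over the noetherian ring $R$ has an associated prime $\p$, and $\hom_R(C,R/\p)\neq 0$ since $C_\p\neq 0$). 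The dual point---surjectivity of $\hom_R(C,M_2)\to\hom_R(C,M_3)$ in the $\catbc$ case---needs a corresponding argument; this faithfulness-type input is what the citation to \cite[Cor.\ 6.3]{holm:fear} is hiding.

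A milder instance of the same over-optimism is your converse for the Takahashi--White item: $\hom_R(C,M)\in\catac\Rightarrow M\in\catbc$ does not follow by ``running the same identifications in reverse''; one must still produce $\ext^{\geq 1}_R(C,M)=0$ and the bijectivity of the evaluation map $C\otimes_R\hom_R(C,M)\to M$ from data about $\hom_R(C,M)$, which again requires an argument of faithfulness type rather than pure bookkeeping. The rest of the proposal is sound: the Pontryagin-dual translation matches the argument of \cite[(3.2.9)]{christensen:gd}, and your Foxby-equivalence construction of a $\catpcc$-proper $\catpcc$-resolution for $M\in\catbc$---the step you single out as the main obstacle---is correct, including the verification of properness via the adjunction and the identification $\hom_R(C,C\otimes_RP)\cong P$; that is essentially the proof of the cited \cite[Thm.\ 6.1]{holm:fear}.
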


The next definitions are due to Holm and J\o rgensen~\cite{holm:smarghd} 
in this generality.  

\begin{defn} \label{defn0301}
Let $C$ be a semidualizing $R$-module.
A \emph{complete $\caticc\catii$-resolution} is a complex $Y$ of $R$-modules 
satisfying the following:
\begin{enumerate}[\quad(1)]
\item $Y$ is exact and $\Hom_R(\caticc,-)$-exact, and
\item $Y_i$ is $C$-injective when $i\geq 0$ and $Y_i$ is  injective when  $i< 0$.
\end{enumerate}
An $R$-module $H$ is \emph{$\text{G}_C$-injective} if there
exists a complete $\caticc\catii$-resolution $Y$ such that $H\cong\coker(\partial^Y_1)$,
in which case $Y$ is a \emph{complete $\caticc\catii$-resolution of $H$}.  
We set
$$\catgic=\text{the subcategory of $\text{G}_C$-injective $R$-modules}.$$
In the special case $C=R$, we write
$\catgi$ in place of $\catgir$.

A \emph{complete $\catff\catfcc$-resolution} is a complex $Z$ of $R$-modules 
satisfying the following.
\begin{enumerate}[\quad(1)]
\item $Z$ is exact and $-\otimes_R\caticc$-exact.
\item $Z_i$ is flat if  $i\geq 0$ and $Z_i$ is  $C$-flat if $i< 0$.
\end{enumerate}
An $R$-module $M$ is \emph{$\text{G}_C$-flat} if there
exists a complete $\catff\catfcc$-resolution $Z$ such that $M\cong\coker(\partial^Z_1)$,
in which case $Z$ is a \emph{complete $\catff\catfcc$-resolution of $M$}.  We set
$$\catgfc=\text{the subcategory of $\text{G}_C$-flat $R$-modules}.$$
In the special case $C=R$, we set
$\catgf=\catgfr$,
and $\gfd=\catgff\text{-}\pd$.

A \emph{complete $\catpp\catpcc$-resolution} is a complex $X$ of $R$-modules 
satisfying the following.
\begin{enumerate}[\quad(1)]
\item $X$ is exact and $\Hom_R(-,\catpcc)$-exact.
\item $X_i$ is projective if  $i\geq 0$ and $X_i$ is  $C$-projective if $i< 0$.
\end{enumerate}
An $R$-module $M$ is \emph{$\text{G}_C$-projective} if there
exists a complete $\catpp\catpcc$-resolution $X$ such that $M\cong\coker(\partial^X_1)$,
in which case $X$ is a \emph{complete $\catpp\catpcc$-resolution of $M$}.  We set
$$\catgpc=\text{the subcategory of $\text{G}_C$-projective $R$-modules}.$$
\end{defn}

\begin{fact} \label{fact0201}
Let $C$ be a semidualizing $R$-module.
Flat $R$-modules and $C$-flat $R$-modules are $\text{G}_C$-flat by~\cite[(2.8.c)]{holm:smarghd}.
It is straightforward to show that an $R$-module $M$ is $\text{G}_C$-flat if and only
the following conditions hold:
\begin{enumerate}[(1)]
\item \label{fact0201a}
$M$ admits an augmented $\catfcc$-coresolution
that is $-\otimes_R\caticc$-exact, and
\item \label{fact0201b}
$\tor_{\geq 1}^R(M,\caticc)=0$.
\end{enumerate}
Let $R\ltimes C$ denote the trivial extension of $R$ by $C$, defined to be the $R$-module
$R\ltimes_R C=R\oplus C$ with ring structure given by $(r,c)(r',c')=(rr',rc'+r'c)$.
Each $R$-module $M$ is naturally an $R\ltimes C$-module via the
natural surjection $R\ltimes C\to R$. 
Within this protocol we have
$M\in\catgic$ if and only if $M\in\cat{GI}(R\ltimes C)$ and
$M\in\catgfc$ if and only if $M\in\cat{GF}(R\ltimes C)$ by~\cite[(2.13) and (2.15)]{holm:smarghd}.
Also~\cite[(2.16)]{holm:smarghd} implies
$\gfcpd_R(M)=\gfd_{R\ltimes C}(M)$.
\end{fact}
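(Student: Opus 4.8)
The plan is to verify the three assertions in turn, treating the equivalence in the displayed ``if and only if'' as the only nontrivial point; the others are direct appeals to \cite{holm:smarghd}. First I would dispatch the opening sentence: flat and $C$-flat modules are $\text{G}_C$-flat by \cite[(2.8.c)]{holm:smarghd}, so nothing is required beyond the citation. For the characterization of $\text{G}_C$-flatness via conditions \eqref{fact0201a}–\eqref{fact0201b}, I would argue as follows. If $M$ is $\text{G}_C$-flat with complete $\catff\catfcc$-resolution $Z$, then truncating $Z$ below degree $0$ (more precisely, taking the subcomplex $\cdots\to Z_{-1}\to Z_{-2}\to\cdots$ together with the surjection onto $\coker(\partial^Z_1)\cong M$) yields an augmented $\catfcc$-coresolution of $M$ that is $-\otimes_R\caticc$-exact, giving \eqref{fact0201a}; and \eqref{fact0201b} follows because the hard (left) half $\cdots\to Z_1\to Z_0$ of $Z$ together with exactness and $-\otimes_R\caticc$-exactness computes $\tor^R_{\geq 1}(M,-)$ against $\caticc$ as zero, using Lemma~\ref{lem0216c} with $\catv=\caticc$ (each $Z_i$ is flat or $C$-flat, hence $\tor$-independent of $\caticc$ by the definition of the Bass/Auslander machinery, more simply because flat modules have no higher $\tor$ and $C$-flat modules $F'\otimes_R C$ satisfy $\tor^R_{\geq 1}(F'\otimes_R C,\Hom_R(C,I))=0$). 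Conversely, given \eqref{fact0201a}–\eqref{fact0201b}, I would splice an ordinary flat resolution of $M$ (which is automatically $-\otimes_R\caticc$-exact in the relevant range by \eqref{fact0201b} and Lemma~\ref{lem0216c}) onto the given $-\otimes_R\caticc$-exact augmented $\catfcc$-coresolution; the resulting complex $Z$ has flat modules in nonnegative degrees, $C$-flat modules in negative degrees, is exact, and is $-\otimes_R\caticc$-exact, so it is a complete $\catff\catfcc$-resolution of $M$.

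For the trivial-extension statements, I would simply cite \cite[(2.13), (2.15), (2.16)]{holm:smarghd} after recalling the $R\ltimes C$-module structure on $M$ via the surjection $R\ltimes C\to R$; the identifications $\catgic = \cat{GI}(R\ltimes C)\cap\catm$ (restricted along this surjection) and $\catgfc = \cat{GF}(R\ltimes C)\cap\catm$, together with $\gfcpd_R(M)=\gfd_{R\ltimes C}(M)$, are exactly the content of those references, so no independent argument is needed.

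The only place requiring genuine care — the ``main obstacle'' — is the converse direction of the characterization: one must check that the spliced complex $Z$ is \emph{exact} at the splice point and that its left half really does consist of flat modules giving $\coker(\partial^Z_1)\cong M$. Exactness at degree $0$ is where one uses that an honest augmented flat resolution $\cdots\to F_1\to F_0\to M\to 0$ can be glued to the augmented $\catfcc$-coresolution $0\to M\to C_0\to C_{-1}\to\cdots$ (from \eqref{fact0201a}) to produce an exact complex; exactness is automatic since each piece is exact and they agree on $M$. That the glued complex is $-\otimes_R\caticc$-exact then needs \eqref{fact0201b} for the left half (via Lemma~\ref{lem0216c}) and hypothesis \eqref{fact0201a} for the right half, and one invokes Lemma~\ref{perp03}\eqref{perp03item3} at the junction to see $-\otimes_R\caticc$-exactness is preserved across the splice. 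Everything else is bookkeeping with the definitions.
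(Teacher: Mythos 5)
Your proposal is correct and follows exactly the route the paper intends: the paper gives no proof of the displayed characterization (calling it straightforward and citing Holm--J\o rgensen's results for the remaining assertions, as you do), and your truncate-and-splice argument, with Lemma~\ref{lem0216}\eqref{lem0216c} handling the Tor-vanishing of the cokernels and the $-\otimes_R\caticc$-exactness of the augmented flat resolution, is precisely that straightforward verification. One cosmetic remark: at the splice, exactness of the tensored complex follows directly from the surjectivity of $F_0\otimes_R\Hom_R(C,I)\to M\otimes_R\Hom_R(C,I)$ and the injectivity of $M\otimes_R\Hom_R(C,I)\to X_{-1}\otimes_R\Hom_R(C,I)$, which is cleaner than invoking Lemma~\ref{perp03}\eqref{perp03item3} there.
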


The next definition, from~\cite{sather:sgc}, is modeled on the construction of $\catgi$.

\begin{defn} \label{defn0502}
Let $\catx$ be a subcategory
of $\catm$. A \emph{complete $\catx$-resolution} is an
exact complex $X$ in $\catx$ that is $\hom_R(\catx,-)$-exact and
$\hom_R(-,\catx)$-exact.\footnote{In the literature, these complexes
are sometimes called ``totally acyclic''.}   
Such a complex is a \emph{complete $\catx$-resolution} 
of $\coker(\partial^X_1)$.
We set 
$$\catg(\catx)=\text{the subcategory of $R$-modules
with a complete $\catx$-resolution}.$$
Set $\catg^0(\catx)=\catx$, $\catg^1(\catx)=\catg(\catx)$ and
$\catg^{n+1}(\catx)=\catg(\catg^n(\catx))$
for $n\geq 1$.
\end{defn}

\begin{fact}  \label{fact0501}
Let $\catx$ be a subcategory
of $\catm$. Using a resolution of the form $0\to X\to 0$,
one sees  that
$\catx\subseteq\catg(\catx)$ and so
$\catg^n(\catx)\subseteq\catg^{n+1}(\catx)$ for each $n\geq 0$.
If $C$ is a semidualizing $R$-module, then
$\catg^n(\catic)=\catgic\cap\catac$
for each $n\geq 1$;
see~\cite[(5.5)]{sather:sgc}.
\end{fact}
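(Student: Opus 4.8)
\emph{Proof plan.}
The inclusion $\catx\subseteq\catg(\catx)$ is purely formal: for $X\in\catx$ let $T$ be the complex with $T_i=X$ in every degree and differential $\partial^T_i$ equal to the identity map of $X$ when $i$ is even and to the zero map when $i$ is odd. Then $T$ is exact, $\coker(\partial^T_1)\cong X$, and applying $\hom_R(X',-)$ or $\hom_R(-,X')$ for $X'\in\catx$ returns a complex of exactly the same shape, hence again exact; so $T$ is a complete $\catx$-resolution of $X$. Taking $\catg^n(\catx)$ in place of $\catx$ gives $\catg^n(\catx)\subseteq\catg(\catg^n(\catx))=\catg^{n+1}(\catx)$ for all $n\geq0$. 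To prove $\catg^n(\catic)=\catgic\cap\catac$ I would isolate the base case $n=1$ together with the ``idempotence'' $\catg(\catgic\cap\catac)=\catgic\cap\catac$; the case $n\geq2$ is then the induction $\catg^n(\catic)=\catg^{n-1}(\catg(\catic))=\catg^{n-1}(\catgic\cap\catac)=\catgic\cap\catac$.

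The clean half of the base case is $\catg(\catic)\subseteq\catgic\cap\catac$, proved by transporting total acyclicity across the Foxby functors $C\otimes_R-$ and $\hom_R(C,-)$. Let $X$ be a complete $\catic$-resolution of $M$, say $X_i=\hom_R(C,I_i)$ with each $I_i$ injective. The adjunction isomorphism $\hom_R(X_i,\hom_R(C,I))\cong\hom_R(C\otimes_RX_i,I)$ turns the hypothesis that $X$ be $\hom_R(-,\catic)$-exact into: $\hom_R(C\otimes_RX,I)$ is exact for all injective $I$, i.e.\ $C\otimes_RX$ is exact. Dually, since injectives lie in $\catbc$ (Fact~\ref{projac}), so that $C\otimes_R\hom_R(C,I)\cong I$, the $\hom_R(\catic,-)$-exactness of $X$ becomes $\hom_R(\catii,-)$-exactness of $C\otimes_RX$. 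An exact complex of injective modules is automatically $\hom_R(-,\catii)$-exact, so $C\otimes_RX$ is a complete injective resolution, with terms $C\otimes_R\hom_R(C,I_i)\cong I_i$ and, by right-exactness of $C\otimes_R-$, with the cokernel of its degree-$1$ differential isomorphic to $C\otimes_RM$; hence $C\otimes_RM$ is Gorenstein injective. On the other hand each $X_i\in\catic\subseteq\catac$ gives $\tor^R_{\geq1}(C,X_i)=0$, so Lemma~\ref{lem0216}\eqref{lem0216c}, applied to the now-exact $C\otimes_RX$, yields $\tor^R_{\geq1}(C,M)=0$; and naturality of the unit $N\to\hom_R(C,C\otimes_RN)$---an isomorphism on each $X_i$, hence a chain isomorphism $X\cong\hom_R(C,C\otimes_RX)$---supplies the remaining Auslander-class conditions for $M$. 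Thus $M\in\catac$, and a standard Foxby-equivalence argument for Gorenstein injective modules upgrades ``$C\otimes_RM$ Gorenstein injective and $M\in\catac$'' to $M\cong\hom_R(C,C\otimes_RM)\in\catgic$.

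For the reverse inclusion and the idempotence the same philosophy applies, but the constructions are genuinely delicate. Given $M\in\catgic\cap\catac$, one assembles a complete $\catic$-resolution of $M$ from two pieces: from $M\in\catac$, applying $\hom_R(C,-)$ to an injective coresolution of $C\otimes_RM$ gives a $\catic$-coresolution of $M$ (the cosyzygies have finite injective dimension, hence lie in $\catbc$, which makes $\hom_R(C,-)$ exact on them); from $M\in\catgic$ one extracts a $\catic$-resolution. The work lies in arranging the splice to be simultaneously $\hom_R(\catic,-)$-exact and $\hom_R(-,\catic)$-exact, for which one first verifies that $\catic$ is an injective cogenerator for $\catgic\cap\catac$. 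Granting that, $\catg(\catgic\cap\catac)=\catgic\cap\catac$ follows along the same lines---its cokernels are read into $\catac$ by the $C\otimes_R-$ computation above, and into $\catgic$ by splicing complete $\caticc\catii$-resolutions of the terms---and all of this is the content of~\cite[(5.5)]{sather:sgc} and the results preceding it. The step I expect to be the real obstacle is exactly this shuttling: complete resolutions are unbounded in both directions, so no naive syzygy argument ever reaches a base case, and one must exploit the extra $\hom_R(\catic,\pm)$-exactness---via the Foxby functors and Lemma~\ref{lem0216}---to push the problem into the setting of ordinary Gorenstein injective modules, where the relevant complex is a complete injective resolution by fiat; threading the natural unit and counit maps through so as to stay inside $\catac$ and $\catbc$ exactly, rather than merely up to Tor/Ext vanishing, is the technical chore.
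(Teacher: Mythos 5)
Your handling of the first assertion is correct and is in substance the same observation the paper intends: all one needs is a contractible exact complex in $\catx$ whose cokernel in degree $0$ is $X$ (the paper's shorthand ``$0\to X\to 0$'' stands for the complex $0\to X\xra{\operatorname{id}}X\to 0$, suitably indexed, which taken literally is not exact; your doubly infinite complex with alternating identity and zero differentials is an equally good witness), and the inclusions $\catg^n(\catx)\subseteq\catg^{n+1}(\catx)$ then follow formally exactly as you say. For the equality $\catg^n(\catic)=\catgic\cap\catac$ the paper gives no argument at all---it simply quotes \cite[(5.5)]{sather:sgc}---so your sketch actually does more than the text. Your detailed proof of the containment $\catg(\catic)\subseteq\catgic\cap\catac$ is sound: using Hom-tensor adjointness and the full faithfulness of the Foxby functors on $\catac$ and $\catbc$, the $\hom_R(-,\catic)$- and $\hom_R(\catic,-)$-exactness of a complete $\catic$-resolution $X$ do transfer to exactness and $\hom_R(\catii,-)$-exactness of $C\otimes_RX$, and Lemma~\ref{lem0216} together with the unit isomorphisms on the terms gives the Auslander-class conditions for $M$. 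For the reverse containment and the idempotence you ultimately defer to the cited result, which is legitimate, since that is precisely what the paper does. One small repair in that sketch: the cosyzygies of an (unbounded) injective coresolution of $C\otimes_RM$ need not have finite injective dimension; the correct reason they lie in $\catbc$ is that $C\otimes_RM\in\catbc$ (because $M\in\catac$), injective modules lie in $\catbc$, and $\catbc$ is closed under cokernels of monomorphisms (Fact~\ref{projac})---equivalently, dimension-shift the vanishing $\ext^{\geq1}_R(C,C\otimes_RM)=0$ supplied by the Auslander-class condition.
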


The final definition of this section is for use in the proof of Theorem~\ref{thmb}.

\begin{defn} \label{defn0501}
Let $C$ be a semidualizing $R$-module, and let $\catx$ be a  subcategory
of $\catm$. A \emph{$\catpcc\catfccott$-complete $\catx$-resolution} is an
exact complex $X$ in $\catx$ that is $\hom_R(\catpcc,-)$-exact and
$\hom_R(-,\catfccott)$-exact.  
Such a complex is a \emph{$\catpcc\catfccott$-complete $\catx$-resolution} 
of $\coker(\partial^X_1)$.
We set 
$$\cathc(\catx)=\text{the subcategory of $R$-modules
with a $\catpcc\catfccott$-complete $\catx$-resolution}.$$
Set $\cathc^0(\catx)=\catx$, $\cathc^1(\catx)=\cathc(\catx)$ and
$\cathc^{n+1}(\catx)=\cathc(\cathc^n(\catx))$
for each $n\geq 1$.
\end{defn}

\begin{disc}  \label{disc0501}
Let $C$ be a semidualizing $R$-module, and let $\catx$ be a  subcategory
of $\catm$.  Let $X$ be an 
exact complex  in $\catx$ that is $\hom_R(C,-)$-exact and
$\hom_R(-,\catfccott)$-exact.  
Hom-tensor adjointness implies that
$X$ is
$\hom_R(\catpcc,-)$-exact and hence a $\catpcc\catfccott$-complete $\catx$-resolution,
as is the complex $\shift^iX$ for each $i\in\bbz$. It follows that
$\coker(\partial^X_i)\in\cathc(\catx)$
for each $i$.

Using a resolution of the form $0\to X\to 0$,
one sees  that $\catx\subseteq\cathc(\catx)$ and so
$\cathc^n(\catx)\subseteq\cathc^{n+1}(\catx)$ for each $n\geq 0$.
Furthermore, if  $\catfc\subseteq\catx$, then
$\catg(\catx)\subseteq\cathc(\catx)$ and so
$\catg^n(\catx)\subseteq\cathc^n(\catx)$ for each $n\geq 1$.
\end{disc}

\section{Modules of finite $\catfccott$-projective dimension} \label{sec04}

This section contains the fundamental properties of the 
modules of finite $\catfccott$-projective dimension. The first two results allow us to 
deduce information for these modules
from the modules of finite $\catic$-injective dimension.

\begin{lem}\label{pdual}
Let $M$ be an $R$-module,
and let $C$ be a semidualizing $R$-module.
\begin{enumerate}[\quad\rm(a)]
\item \label{pduala}
The Pontryagin dual
$M^*$ is $C$-flat  if and only if  $M$ is $C$-injective.
\item \label{pdualb}
The Pontryagin dual
$M^*$ is $C$-injective  if and only if  $M$ is $C$-flat.
\item \label{pdualc}
If $\tor^R_{\geq 1}(C,M)=0$, then $M^*$ is $C$-cotorsion.
\item \label{pduald}
If $M$ is $C$-injective, then $M^*$ is $C$-flat and $C$-cotorsion.
\end{enumerate}
\end{lem}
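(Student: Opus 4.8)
The four statements of Lemma~\ref{pdual} all concern the Pontryagin dual $M^*=\Hom_{\bbz}(M,\bbq/\bbz)$ of an $R$-module, and the natural strategy is to reduce each one to the analogous fact about ordinary flat, injective, and cotorsion modules—namely Lemma~\ref{lem0202}—by exploiting the standard Hom-tensor interplay between $(-)\otimes_R C$, $\Hom_R(C,-)$, and Pontryagin duality. First I would record the two isomorphisms that drive everything: for any $R$-modules $C$ and $F$ there is a natural isomorphism $(F\otimes_R C)^*\cong\Hom_R(C,F^*)$ (this is just Hom-tensor adjointness over $\bbz$, since $\bbq/\bbz$ is an injective $\bbz$-module), and dually $(\Hom_R(C,I))^*\cong C\otimes_R I^*$ when $I$ is injective (or more generally using a suitable boundedness/flatness hypothesis). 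These let one translate "$C$-flat" and "$C$-injective" across the dual.

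\emph{Parts (a) and (b).} For \eqref{pdualb}, if $M$ is $C$-flat, write $M\cong F\otimes_R C$ with $F$ flat; then $M^*\cong(F\otimes_R C)^*\cong\Hom_R(C,F^*)$, and $F^*$ is injective by Lemma~\ref{lem0202}\eqref{lem0202b}, so $M^*$ is $C$-injective. Conversely, if $M^*$ is $C$-injective, then $M^*\cong\Hom_R(C,I)$ for some injective $I$; one then needs to recover that $M$ itself is $C$-flat. The cleanest route is biduality: $M$ embeds in $M^{**}$, but more usefully one uses that $M$ is $C$-flat iff $M\in\catbc$ and $\Hom_R(C,M)$ is flat (cf. the characterizations around Fact~\ref{projac} and~\ref{fact0201}), together with the fact from Fact~\ref{projac} that membership in $\catbc$, $\catac$ is detected by Pontryagin duality. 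I expect part \eqref{pduala} to be entirely parallel, using $(\Hom_R(C,I))^*\cong C\otimes_R I^*$ and Lemma~\ref{lem0202}\eqref{lem0202a}: if $M$ is $C$-injective then $M\cong\Hom_R(C,I)$ and $M^*\cong C\otimes_R I^*$ with $I^*$ flat, so $M^*$ is $C$-flat; the converse again goes through the Bass/Auslander class characterization.

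\emph{Parts (c) and (d).} For \eqref{pdualc}, I would show $\catfc\perp M^*$ directly. Given a flat $R$-module $F$, one has $\Hom_R(C,F)$-type adjunctions, but the efficient computation is $\ext^i_R(F\otimes_R C,M^*)\cong\ext^i_R(F\otimes_R C,\Hom_{\bbz}(M,\bbq/\bbz))\cong\Hom_{\bbz}(\tor_i^R(F\otimes_R C,M),\bbq/\bbz)$, by the standard adjunction relating $\ext$ into a Pontryagin dual with $\tor$ (valid since $\bbq/\bbz$ is $\bbz$-injective). Now $\tor_i^R(F\otimes_R C,M)$: since $F$ is flat, $F\otimes_R C\otimes_R^{\mathbf L}M\simeq F\otimes_R(C\otimes_R^{\mathbf L}M)$, so $\tor_i^R(F\otimes_R C,M)\cong F\otimes_R\tor_i^R(C,M)=0$ for $i\geq 1$ by hypothesis. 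Hence $\ext^{\geq 1}_R(F\otimes_R C,M^*)=0$, i.e.\ $M^*$ is $C$-cotorsion. Finally \eqref{pduald} is the combination: if $M$ is $C$-injective then $M\in\catic\subseteq\catbc$, so in particular $M\in\catbc$ gives $\tor_{\geq 1}^R(C,\Hom_R(C,M))=0$ and also $\ext^{\geq 1}_R(C,M)=0$; more to the point $C$-injective modules have $\tor_{\geq 1}^R(C,M)=0$ (a $C$-injective module lies in $\catbc$, hence $\ext^{\geq 1}_R(C,M)=0$; but we need the Tor vanishing—this holds because $M\cong\Hom_R(C,I)$ and $\tor_i^R(C,\Hom_R(C,I))=0$ for $i\geq1$ by the Bass-class conditions). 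So part \eqref{pdualc} applies to give $M^*$ is $C$-cotorsion, and part \eqref{pdualb}—or rather \eqref{pduala}—gives $M^*$ is $C$-flat; combining, $M^*$ is $C$-flat and $C$-cotorsion, which by Remark~\ref{defn0401}/Lemma~\ref{lem0201} is the same as $C$-flat $C$-cotorsion.

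\textbf{Main obstacle.} The converse directions in \eqref{pduala} and \eqref{pdualb} are the delicate part: forward implications are immediate from the adjunction isomorphisms plus Lemma~\ref{lem0202}, but going back from "$M^*$ is $C$-flat/$C$-injective" to a statement about $M$ requires either a biduality argument (and $M\hookrightarrow M^{**}$ is only pure-injective-friendly, not obviously enough) or—more robustly—the fact that the Auslander and Bass classes are interchanged by Pontryagin duality (Fact~\ref{projac}) combined with the module-level characterizations "$N$ is $C$-flat $\iff N\in\catbc$ and $\Hom_R(C,N)$ is flat" and "$N$ is $C$-injective $\iff N\in\catac$ and $C\otimes_R N$ is injective". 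Getting those characterizations lined up correctly, and checking the requisite finiteness of $\Hom$ and flatness conditions survive dualization, is where the real work lies; everything else is formal adjunction bookkeeping.
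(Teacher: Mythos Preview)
Your plan matches the paper's proof closely: both reduce to Lemma~\ref{lem0202} via Hom-tensor adjointness and Hom-evaluation, and your argument for~\eqref{pdualc} via the Ext--Tor duality $\ext^i_R(F\otimes_R C,M^*)\cong\tor_i^R(F\otimes_R C,M)^*\cong(F\otimes_R\tor_i^R(C,M))^*$ is exactly the paper's computation (written there through an explicit projective resolution).

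One correction in~\eqref{pduald}: you write $\catic\subseteq\catbc$, but in fact $\catic\subseteq\catac$ (Fact~\ref{projac}), and it is the \emph{Auslander} class condition that gives $\tor^R_{\geq 1}(C,M)=0$ directly. Your fallback argument---$I\in\catbc$ so $\tor^R_{\geq 1}(C,\Hom_R(C,I))=0$---does reach the right conclusion, but the class labels along the way are reversed. For the converses in~\eqref{pduala}--\eqref{pdualb}, the paper is slightly more concrete than your outline: for~\eqref{pduala}, from $M^*\cong F\otimes_R C$ with $F$ flat and $F\in\catac$ it gets $F\cong\Hom_R(C,M^*)\cong(C\otimes_R M)^*$, so $C\otimes_R M$ is injective, and then invokes \cite[Thm.~1]{holm:fear} to conclude $M\in\catic$. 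Your route through ``$M^*\in\catac\Rightarrow M\in\catbc$'' plus the module-level characterization uses the same ingredients and is equally valid.
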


\begin{proof}
\eqref{pduala}
Assume that $M$ is $C$-injective, so there exists an injective $R$-module $I$ such
that $M\cong\hom_R(C,I)$. This yields the first isomorphism in the following sequence
while the second is from Hom-evaluation~\cite[(0.3.b)]{christensen:apac}:
$$M^*
\cong\Hom_{\bbz}(\hom_R(C,I),\bbq/\bbz)
\cong C\otimes_R\Hom_{\bbz}(I,\bbq/\bbz).
$$
Since $I$ is injective, Lemma~\ref{lem0202}\eqref{lem0202b}
implies that $\Hom_{\bbz}(I,\bbq/\bbz)$ is flat.
Hence, the displayed isomorphisms imply that $M^*$
is $C$-flat.

Conversely, assume that $M^*$ is $C$-flat, so there exists
a flat $R$-module $F$ such that $M^*\cong F\otimes_R C$.
As $F$ is flat it is in $\catac$, and this yields the first isomorphism in the next sequence,
while the third isomorphism is Hom-tensor adjointness
$$F\cong\Hom_R(C,F\otimes_R C)
\cong\Hom_R(C,\Hom_{\bbz}(M,\bbq/\bbz))
\cong\Hom_{\bbz}(C\otimes_RM,\bbq/\bbz).
$$
This module is flat, and so Lemma~\ref{lem0202}\eqref{lem0202a}
implies that $C\otimes_RM$ is injective.
From~\cite[Thm.\ 1]{holm:fear} we conclude that
$M$ is $C$-injective.

\eqref{pdualb}
This is proved similarly.

\eqref{pdualc}
Let $P$ be a projective resolution of $M$.
Our Tor-vanishing hypothesis implies that there is a quasiisomorphism
$C\otimes_R P\simeq C\otimes_RM$.
For each flat $R$-module $F$,  this yields a quasiisomorphism
$$F\otimes_RC\otimes_R P\simeq F\otimes_RC\otimes_RM.$$
Because $\qmodz$ is injective over $\bbz$, this provides the third 
quasiisomorphism in the next
sequence, while
the second quasiisomorphism is Hom-tensor adjointness
\begin{align}
\Hom_R(F\otimes_RC,P^*)
&\simeq\Hom_R(F\otimes_RC,\Hom_{\bbz}(P,\qmodz))\notag\\
&\simeq\Hom_{\bbz}(F\otimes_RC\otimes_RP,\qmodz)\label{pdulac1}\tag{$\ast$}\\
&\simeq\hom_{\bbz}(F\otimes_RC\otimes_RM,\qmodz).\notag
\end{align}
Since $\qmodz$ is injective over $\bbz$, there are quasiisomorphisms
$$M^* \simeq\pdual{M}\simeq\Hom_{\bbz}(P,\qmodz)
\simeq P^*.$$
By Lemma~\ref{lem0202}\eqref{lem0202a}, it follows
that $P^*$ is an injective resolution
of $M^*$ over $R$.
In particular, taking cohomology in the displayed sequence~\eqref{pdulac1} yields
isomorphisms
\begin{align*}
\ext^{i}_R(F\otimes_RC,M^*)
&\cong\HH_{-i}(\Hom_R(F\otimes_RC,P^*))\\
&\cong\HH_{-i}(\hom_{\bbz}(F\otimes_RC\otimes_RM,\qmodz)).
\end{align*}
This is 0 when $i\neq 0$
because 
$\hom_{\bbz}(F\otimes_RC\otimes_RM,\qmodz)$
is a module.
Hence, the desired conclusion.

\eqref{pduald}
Since $M$ is $C$-injective, it is in $\catac$ by Fact~\ref{projac},
and so
$\tor_{\geq 1}^R(C,M)=0$.
Hence $M$ is $C$-cotorsion by part~\eqref{pdualc},
and it is $C$-flat by part~\eqref{pduala}.
\end{proof}

\begin{lem}\label{lem0701}
Let $M$ be an $R$-module,
and let $C$ be a semidualizing $R$-module.
\begin{enumerate}[\quad\rm(a)]
\item \label{lem0701a}
There is an equality 
$\icid_R(M^*)=\fcpd_R(M)$.
\item \label{lem0701b}
There is an equality 
$\fcpd_R(M^*)=\icid_R(M)$.
\end{enumerate}
\end{lem}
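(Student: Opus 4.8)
The plan is to prove each of the two equalities by establishing both inequalities, reducing in every case to the classical Pontryagin-duality identities $\id_R(L^*)=\fd_R(L)$ and $\fd_R(L^*)=\id_R(L)$ (valid over the noetherian ring $R$) by transporting resolutions along the Foxby equivalence $\catac\rightleftarrows\catbc$ given by $C\otimes_R-$ and $\Hom_R(C,-)$. For the inequalities $\icid_R(M^*)\le\fcpd_R(M)$ in~\eqref{lem0701a} and $\fcpd_R(M^*)\le\icid_R(M)$ in~\eqref{lem0701b}, I would apply the exact functor $(-)^*=\Hom_{\bbz}(-,\qmodz)$ to a bounded $\catfc$-resolution of $M$, respectively a bounded $\catic$-coresolution of $M$: exactness of $(-)^*$ converts the one into the other, and Lemma~\ref{pdual}\eqref{pdualb} (resp.~\eqref{pduala}) identifies the dual of a $C$-flat module as $C$-injective (resp.\ the dual of a $C$-injective module as $C$-flat). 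These inequalities are thus immediate, and the real work lies in the reverse ones.

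For $\fcpd_R(M)\le\icid_R(M^*)$ in~\eqref{lem0701a}, I would assume $n:=\icid_R(M^*)<\infty$; then $M^*\in\catac$ and hence $M\in\catbc$ by Fact~\ref{projac}. A $\catic$-coresolution $0\to M^*\to I^0\to\cdots\to I^n\to 0$ has all of its terms and cosyzygies in $\catac$, since $\catic\subseteq\catac$ and $\catac$ is closed under cokernels of monomorphisms; so applying $C\otimes_R-$, which is exact on complexes in $\catac$, and using $C\otimes_R\Hom_R(C,E)\cong E$ for injective $E$ (valid as $E\in\catbc$), one obtains an injective coresolution of $C\otimes_RM^*$ of length $n$, whence $\id_R(C\otimes_RM^*)\le n$. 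Hom-evaluation (as used in the proof of Lemma~\ref{pdual}\eqref{pduala}) gives $C\otimes_RM^*\cong\Hom_R(C,M)^*$, so $\fd_R(\Hom_R(C,M))=\id_R(\Hom_R(C,M)^*)\le n$ by the classical identity. Finally, applying $C\otimes_R-$ to a bounded flat resolution of $\Hom_R(C,M)$ (again a complex in $\catac$) and using $C\otimes_R\Hom_R(C,M)\cong M$ produces a $\catfc$-resolution of $M$ of length at most $n$, as required.

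Part~\eqref{lem0701b} is handled by the mirror-image argument, with the roles of $C\otimes_R-$ and $\Hom_R(C,-)$, and of $\catac$ and $\catbc$, interchanged. Assuming $n:=\fcpd_R(M^*)<\infty$ yields $M^*\in\catbc$ and $M\in\catac$; applying $\Hom_R(C,-)$ (exact on complexes in $\catbc$) to a bounded $\catfc$-resolution of $M^*$, together with $\Hom_R(C,F\otimes_RC)\cong F$ for flat $F$, gives a flat resolution of $\Hom_R(C,M^*)$ of length $n$. Ordinary Hom-tensor adjointness identifies $\Hom_R(C,M^*)\cong(C\otimes_RM)^*$, so $\id_R(C\otimes_RM)=\fd_R((C\otimes_RM)^*)\le n$; applying $\Hom_R(C,-)$ to a bounded injective resolution of $C\otimes_RM$ and using $\Hom_R(C,C\otimes_RM)\cong M$ then gives a $\catic$-coresolution of $M$ of length at most $n$.

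I expect the principal difficulty to be bookkeeping rather than any single hard step: at each stage one must verify that the complex being dualized or transported actually lies in $\catac$ or in $\catbc$, so that $(-)^*$, $C\otimes_R-$ and $\Hom_R(C,-)$ stay exact and the evaluation maps are isomorphisms; and one must keep straight that in~\eqref{lem0701a} the isomorphism $C\otimes_RM^*\cong\Hom_R(C,M)^*$ genuinely uses Hom-evaluation (i.e.\ that $C$ is finitely presented), whereas its analogue $\Hom_R(C,M^*)\cong(C\otimes_RM)^*$ in~\eqref{lem0701b} is merely adjointness. Note also that if one first establishes (or cites) the formulas $\fcpd_R(N)=\fd_R(\Hom_R(C,N))$ for $N\in\catbc$ and $\icid_R(N)=\id_R(C\otimes_RN)$ for $N\in\catac$ (with value $\infty$ in both cases otherwise), then the lemma follows in a couple of lines from the classical duality identities, the isomorphisms above, and Fact~\ref{projac}.
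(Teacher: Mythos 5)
Your argument is correct, but it follows a genuinely different route from the paper's. For the nontrivial inequalities the paper stays entirely inside Pontryagin duality: from $\icid_R(M^*)=j<\infty$ it gets $M^*\in\catac$, hence $M\in\catbc$, takes a (possibly unbounded) $\catpcc$-proper $\catpcc$-resolution $Z$ of $M$ supplied by Fact~\ref{projac}, observes via Lemma~\ref{pdual}\eqref{pdualb} that $Z^*$ is an $\caticc$-coresolution of $M^*$, and then invokes the cosyzygy result \cite[(3.3.b)]{takahashi:hasm} to conclude that $\coker(\partial^Z_{j+1})^*$ is $C$-injective, whence $\coker(\partial^Z_{j+1})\in\catfc$ and the truncation of $Z$ witnesses $\fcpd_R(M)\leq j$. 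You instead transport dimensions across the Foxby equivalence: you pass from a bounded $\caticc$-coresolution of $M^*$ to an injective coresolution of $C\otimes_RM^*\cong\Hom_R(C,M)^*$, apply the classical identities $\id_R(L^*)=\fd_R(L)$ and $\fd_R(L^*)=\id_R(L)$ (the latter needing noetherianness, which you note), and then move a bounded flat resolution of $\Hom_R(C,M)$ back through $C\otimes_R-$; in effect you re-prove the transfer formulas $\fcpd_R(N)=\fd_R(\Hom_R(C,N))$ for $N\in\catbc$ and $\icid_R(N)=\id_R(C\otimes_RN)$ for $N\in\catac$, which is essentially the content of the Takahashi--White results the paper cites. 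Your easy inequalities (dualizing a bounded $\catfcc$-resolution, resp.\ $\caticc$-coresolution, via Lemma~\ref{pdual}) coincide with the paper's. What each approach buys: the paper's proof is shorter given the external cosyzygy lemma and never needs exactness of $C\otimes_R-$ or $\Hom_R(C,-)$ on complexes in $\catac$ or $\catbc$; yours is more self-contained modulo the classical flat/injective duality, at the cost of the bookkeeping you acknowledge (checking that all terms and (co)syzygies stay in $\catac$ or $\catbc$, and distinguishing the Hom-evaluation isomorphism $C\otimes_RM^*\cong\Hom_R(C,M)^*$, which uses that $C$ is finitely generated, from the plain adjointness $\Hom_R(C,M^*)\cong(C\otimes_RM)^*$); all of these checks do go through as you indicate.
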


\begin{proof}
We prove part~\eqref{lem0701a};
the proof of part~\eqref{lem0701b} is similar.

For the inequality $\icid_R(M^*)\leq\fcpd_R(M)$,
assume  that $\fcpd_R(M)<\infty$.
Let $X$ be a $\catfc$-resolution of $M$ such that
$X_i=0$ for all $i>\fcpd_R(M)$.
It follows from Lemma~\ref{pdual}\eqref{pdualb}
that the complex $X^*$ is an $\caticc$-coresolution
of $M^*$ such that $X_i^*=0$ for all $i>\fcpd_R(M)$.
The desired inequality now follows.

For the reverse inequality, 
assume  that $j=\icid_R(M^*)<\infty$.
Fact~\ref{projac} implies that $M^*$ is in $\catac$,
and hence also implies that $M\in\catbc$. This 
condition implies that $M$ has a $\catpcc$-resolution $Z$
by Fact~\ref{projac}.
In particular, this is an $\catfcc$-resolution of $M$,
and so Lemma~\ref{pdual}\eqref{pdualb} implies that
$Z^*$ is an $\caticc$-coresolution of $M^*$. 
From~\cite[(3.3.b)]{takahashi:hasm}
we know that $\ker((\partial^Z_{j+1})^*)\cong\coker(\partial^Z_{j+1})^*$
is in $\catic$. Lemma~\ref{pdual}\eqref{pdualb} implies
$\coker(\partial^Z_{j+1})\in\catfc$.
It follows that the truncated complex
$$Z': 0\to\coker(\partial^Z_{j+1})\to Z_{j-1}\to \cdots\to Z_0\to 0$$
is an $\catfcc$-resolution of $M$ such that
$Z_i'=0$ for all $i>j$.
The desired inequality now follows, and hence the equality.
\end{proof}

The next three lemmata document properties of $\catfccot$
for use in the sequel.  The first of these contains the 
characterization of $C$-flat $C$-cotorsion modules
mentioned in Remark~\ref{defn0401}.

\begin{lem} \label{lem0201}
Let $C$ and $M$ be  $R$-modules with $C$ semidualizing.
The following conditions are equivalent:
\begin{enumerate}[\quad\rm(i)]
\item \label{lem0201a}
$M\in\catfccot$;
\item \label{lem0201b}
$M\in\catfc$ and $\catfc\perp M$;
\item \label{lem0201c}
$M\in\catbc$ and $\hom_R(C,M)\in\catfcot$;
\item \label{lem0201d}
$\hom_R(C,M)\in\catfcot$.
\end{enumerate}
In particular, we have $\catfc\perp\catfccot$.
\end{lem}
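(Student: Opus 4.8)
The plan is to prove the cycle of implications $\eqref{lem0201a}\Rightarrow\eqref{lem0201b}\Rightarrow\eqref{lem0201c}\Rightarrow\eqref{lem0201d}\Rightarrow\eqref{lem0201a}$, using the Pontryagin dual to transport known facts about $C$-injective modules (established in Lemma~\ref{pdual}) into facts about $C$-flat $C$-cotorsion modules, together with the basic stability properties of the Bass class $\catbc$ recorded in Fact~\ref{projac}.

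First I would dispatch $\eqref{lem0201a}\Rightarrow\eqref{lem0201b}$: if $M=F\otimes_R C$ with $F$ flat and cotorsion, then $M\in\catfc$ by definition, so it remains to see $\catfc\perp M$, i.e.\ $\ext^{\geq1}_R(F'\otimes_R C,M)=0$ for every flat $F'$. Since $F\in\catac$, Hom-tensor adjointness gives $\ext^i_R(F'\otimes_R C, F\otimes_R C)\cong\ext^i_R(F', \hom_R(C,F\otimes_R C))\cong\ext^i_R(F',F)$, which vanishes for $i\geq1$ because $F$ is cotorsion and $F'$ is flat. For $\eqref{lem0201b}\Rightarrow\eqref{lem0201c}$: writing $M=F\otimes_R C$ with $F$ flat, we have $M\in\catfc\subseteq\catbc$ (flat modules lie in $\catac$, whence $F\otimes_R C$ satisfies the Bass-class conditions — this is the standard fact; cite Fact~\ref{projac} or argue directly), and $\hom_R(C,M)\cong\hom_R(C,F\otimes_R C)\cong F$ is flat; the hypothesis $\catfc\perp M$ then gives $\ext^{\geq1}_R(F'\otimes_R C,M)=0$ for all flat $F'$, and via the adjunction isomorphism above this says $\ext^{\geq1}_R(F',F)=0$, i.e.\ $F=\hom_R(C,M)$ is cotorsion, so $\hom_R(C,M)\in\catfcot$. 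The implication $\eqref{lem0201c}\Rightarrow\eqref{lem0201d}$ is a forgetting of hypotheses, so it is trivial.

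The remaining implication $\eqref{lem0201d}\Rightarrow\eqref{lem0201a}$ is the one requiring the most care. Assume $\hom_R(C,M)\in\catfcot$. The issue is that a priori we do not know $M\in\catbc$, so we cannot immediately recover $M$ as $C\otimes_R\hom_R(C,M)$. I would handle this by passing to Pontryagin duals: by Lemma~\ref{lem0202} and the flatness half of Lemma~\ref{pdual}\eqref{pduala}--\eqref{pdualb}, the conditions "$\hom_R(C,M)$ flat" and "$\hom_R(C,M)$ cotorsion" dualize to statements about $M^*$; combined with Fact~\ref{projac}'s characterization $M\in\catbc\iff\hom_R(C,M)\in\catac$ and the fact that $\catfcot\subseteq\catac$ (flat modules are in $\catac$), we get $\hom_R(C,M)\in\catac$, hence $M\in\catbc$. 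Once $M\in\catbc$, the evaluation map $C\otimes_R\hom_R(C,M)\to M$ is an isomorphism, so $M\cong C\otimes_R F$ with $F=\hom_R(C,M)$ flat and cotorsion, i.e.\ $M\in\catfccot$. I expect the main obstacle to be exactly this last step — establishing $M\in\catbc$ from $\hom_R(C,M)\in\catfcot$ alone — which forces the detour through $\catfcot\subseteq\catac$ and the Bass-class characterization rather than a direct computation.

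Finally, the "in particular" claim $\catfc\perp\catfccot$ is immediate once the equivalences are in hand: given $F'\otimes_R C\in\catfc$ and $M\in\catfccot$, condition~\eqref{lem0201b} gives $\catfc\perp M$, which by definition means $\ext^{\geq1}_R(F'\otimes_R C,M)=0$; since $F'$ and $M$ were arbitrary, $\catfc\perp\catfccot$.
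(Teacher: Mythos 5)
Your proposal is correct and follows essentially the same route as the paper: the Ext isomorphism $\ext^i_R(F'\otimes_RC,F\otimes_RC)\cong\ext^i_R(F',F)$ obtained from adjointness together with $\Hom_R(C,F\otimes_RC)\cong F$ and $\ext^{\geq1}_R(C,F\otimes_RC)=0$, membership $\catfc\subseteq\catbc$ and the evaluation isomorphism for the Bass-class steps, and the implication from (iv) via $\catfcot\subseteq\catf\subseteq\catac$ and the characterization $M\in\catbc\iff\Hom_R(C,M)\in\catac$ of Fact~\ref{projac} (your Pontryagin-dual remark is superfluous, since this is all that is needed). The only place where the paper is more explicit is in justifying the adjointness at the level of Ext, which it does by resolving $C$ and $F'$ and using the complex $P'\otimes_RP$; your appeal to $F\in\catac$ supplies exactly the vanishing that makes this step legitimate.
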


\begin{proof}
\eqref{lem0201a}$\iff$\eqref{lem0201b}.
It suffices to show, for each flat $R$-module $F$,
that $\catf\perp F$ if and only if $\catfc\perp F\otimes_RC$.
Let $F'$ be a flat $R$-module.
It suffices to show that 
\begin{equation*} 
\ext^{i}_R(F'\otimes_RC,F\otimes_RC)
\cong \ext^i_R(F',F)
\end{equation*}
for each $i$.
From~\cite[(1.11.a)]{white:gpdrsm}
we have the first
isomorphism in the next sequence
$$\ext^i_R(C,F\otimes_RC)
\cong\ext^i_R(C,C)\otimes_R F
\cong\begin{cases}
R\otimes_RF\cong F & \text{if $i\neq 0$} \\
0\otimes_R F\cong 0 & \text{if $i= 0$}
\end{cases}
$$
and the second isomorphism is from the fact that $C$ is semidualizing.
Let $P$ be a projective resolution of $C$.
The previous display provides a quasiisomorphism
$$\Hom_R(P,F\otimes_R C)\simeq F.$$
Let $P'$ be a projective resolution of $F'$.  
Hom-tensor adjointness
yields the first quasiisomorphism in the next sequence
$$\Hom_R(P'\otimes_RP,F\otimes_RC)
\simeq\Hom_R(P',\Hom_R(P,F\otimes_RC))
\simeq\Hom_R(P',F)$$
and the second quasiisomorphism is from the previous display,
because $P'$ is a bounded below complex of projective
$R$-modules.
Since $F'$ is flat, we conclude that $P'\otimes_RP$ is a projective resolution
of $F'\otimes_R C$.
It follows that we have
\begin{align*}
\ext^{i}_R(F'\otimes_RC,F\otimes_RC)
&\cong\HH_{-i}(\Hom_R(P'\otimes_RP,F\otimes_RC)) \\
&\cong\HH_{-i}(\Hom_R(P',F))\\
&\cong \ext^i_R(F',F)
\end{align*}
as desired.

\eqref{lem0201a}$\implies$\eqref{lem0201c}.
Assume that $M\in\catfccot$, that is, that $M\cong C\otimes_R F$ for some
$F\in\catfcot\subseteq\catac$.
Then
$$\hom_R(C,M)\cong\hom_R(C, C\otimes_R F)\cong F\in\catfccot$$
and $M\in\catfccot\subseteq\catfc\subseteq\catbc$.

\eqref{lem0201c}$\implies $\eqref{lem0201a}.
If $M\in\catbc$ and $\hom_R(C,M)\in\catfcot$,
then there is an isomorphism $M\cong C\otimes_R\hom_R(C,M)\in\catfccot$.

\eqref{lem0201c}$\iff$\eqref{lem0201d}. This is from Fact~\ref{projac}
because $\catfcot\subseteq\catac$.

The conclusion $\catfc\perp\catfccot$ follows from the implication
\eqref{lem0201a}$\implies$\eqref{lem0201b}.
\end{proof}

\begin{lem} \label{lem0207}
If $C$ is a semidualzing $R$-module,
then the category $\catfccot$ is closed under products, extensions and summands.
\end{lem}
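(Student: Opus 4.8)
The strategy is to transport each closure property for $\catfccot$ from the corresponding closure property for the class $\catfcot$ of flat cotorsion modules, using the functor $C\otimes_R-$ together with its quasi-inverse $\hom_R(C,-)$ on the Bass class, as encoded in Lemma~\ref{lem0201}. The key structural fact is that a short exact sequence of $C$-flat $C$-cotorsion modules corresponds, under $\hom_R(C,-)$, to a short exact sequence of flat cotorsion modules, and conversely; this is because modules in $\catfccot$ lie in $\catbc$ by Lemma~\ref{lem0201}, the Bass class is closed under extensions, kernels of epimorphisms and cokernels of monomorphisms by Fact~\ref{projac}, and $C\otimes_R-$ is exact on sequences in $\catbc$ (indeed $\tor^R_{\geq1}(C,-)$ vanishes on $\catbc$). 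So the plan has three pieces, one per property.

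\emph{Products.} A product $\prod_\lambda (F_\lambda\otimes_R C)$ with each $F_\lambda$ flat cotorsion: since $C$ is finitely generated, $\prod_\lambda(F_\lambda\otimes_R C)\cong(\prod_\lambda F_\lambda)\otimes_R C$; the class of flat cotorsion modules is closed under arbitrary products (this is standard—flat modules over a noetherian ring are closed under products, and cotorsion modules are closed under products because $\ext$ commutes with products in the second variable), so $\prod_\lambda F_\lambda\in\catfcot$ and the product is in $\catfccot$.

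\emph{Extensions.} Given $0\to M'\to M\to M''\to 0$ with $M',M''\in\catfccot$: both ends lie in $\catbc$, so $M\in\catbc$ by Fact~\ref{projac}, and applying the exact functor $\hom_R(C,-)$ (exact here because $\ext^{\geq1}_R(C,M')=0$ as $M'\in\catbc$) gives a short exact sequence $0\to\hom_R(C,M')\to\hom_R(C,M)\to\hom_R(C,M'')\to0$ with outer terms in $\catfcot$; hence the middle term is flat cotorsion (flats closed under extensions, cotorsions closed under extensions via Lemma~\ref{perp03}\eqref{perp03item1}), so $\hom_R(C,M)\in\catfcot$, and then $M\cong C\otimes_R\hom_R(C,M)\in\catfccot$ by the Bass-class isomorphism.

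\emph{Summands.} If $M\oplus N\cong L\in\catfccot$, then $M$ and $N$ are summands of a module in $\catbc$, hence in $\catbc$ (Fact~\ref{projac}), and $\hom_R(C,M)$ is then a summand of $\hom_R(C,L)\in\catfcot$; flat cotorsion modules are closed under summands, so $\hom_R(C,M)\in\catfcot$ and $M\cong C\otimes_R\hom_R(C,M)\in\catfccot$ by Lemma~\ref{lem0201}, and similarly for $N$.

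The only mild obstacle is the products case: one must be sure that flat cotorsion modules really are closed under arbitrary products and that $\prod$ commutes with $-\otimes_R C$ for a finitely presented $C$; both are well known (the latter since $C$ is finitely generated over a noetherian ring, hence finitely presented), but they should be cited or noted rather than assumed silently. Everything else is a routine application of Lemma~\ref{lem0201}, Fact~\ref{projac}, and the elementary closure properties of flat and of cotorsion modules.
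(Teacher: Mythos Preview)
Your argument is correct, and for products it is exactly the paper's argument. For extensions and summands, however, you take a different route from the paper. The paper uses the characterization in Lemma~\ref{lem0201}\eqref{lem0201b}, namely $\catfccot=\catfc\cap\catfc^{\perp}$: the class $\catfc$ is closed under extensions and summands by results of Holm and White, and the class of $C$-cotorsion modules is closed under extensions by Lemma~\ref{perp03}\eqref{perp03item2} and under summands by additivity of $\ext$; intersecting gives the result immediately, with no Bass-class machinery needed. Your approach instead uses the equivalence in Lemma~\ref{lem0201}\eqref{lem0201c}/\eqref{lem0201d}, transporting the problem along $\hom_R(C,-)$ to the class $\catfcot$ and invoking closure there. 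This is perfectly valid and has the aesthetic virtue of reducing all three closure properties to the corresponding ones for $\catfcot$, but it is slightly longer. One small caveat: when you write that summands of a module in $\catbc$ lie in $\catbc$ ``(Fact~\ref{projac})'', note that Fact~\ref{projac} as stated only records closure under extensions, kernels of epimorphisms, and cokernels of monomorphisms. Closure of $\catbc$ under summands is of course true (and easy, since the defining Tor-, Ext-, and evaluation conditions are additive), but you should justify it separately, or simply bypass it by invoking condition~\eqref{lem0201d} of Lemma~\ref{lem0201}, which does not require membership in $\catbc$.
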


\begin{proof}
Consider a set $\{F_{\lambda}\}_{\lambda\in\Lambda}$ of
modules in $\catfcot$.
From~\cite[(3.2.24)]{enochs:rha} we have
$\prod_{\lambda}F_{\lambda}\in \catfcot$
and so $C\otimes_R(\prod_{\lambda}F_{\lambda})\in \catfccot$.
Hence, we have
$$\textstyle
\prod_{\lambda}(C\otimes_RF_{\lambda})
\cong C\otimes_R(\prod_{\lambda}F_{\lambda})\in \catfccot$$
where the isomorphism comes from the
fact that $C$ is finitely presented. Thus $\catfccot$ is closed under products.

By Lemma~\ref{perp03}\eqref{perp03item2}, the category of
$C$-cotorsion $R$-modules is closed under extensions, and 
it is closed under summands by the additivity
of Ext.
The category $\catfc$ is closed under extensions and summands
by~\cite[Props.\ 5.1(a) and 5.2(a)]{holm:fear}.
The result now follows from Lemma~\ref{lem0201}.
\end{proof}

Note that hypotheses of the next lemma are satisfied when $M\in \catfc^{\perp}\cap\catbc$.

\begin{lem} \label{lem0206}
Let $C$ be a semidualizing $R$-module, and 
let $M$ be a $C$-cotorsion $R$-module such that the natural evaluation map
$C\otimes_R\Hom_R(C,M)\to M$ is bijective.
\begin{enumerate}[\quad\rm(a)]
\item \label{lem0206a}
The module $M$ has an $\catfccott$-cover,
and every $C$-flat cover of $M$ is an $\catfccott$-cover of $M$ with $C$-cotorsion
kernel.
\item \label{lem0206b}
Each $\catfccott$-precover of $M$ is surjective.
\item \label{lem0206c}
Assume further that $\tor_{\geq 1}^R(C,\Hom_R(C,M))=0$.
Then $M$ has an $\catfcc$-proper $\catfccott$-resolution 
such that $\ker(\partial^X_{i-1})$ is $C$-cotorsion for each $i$.
\end{enumerate}
\end{lem}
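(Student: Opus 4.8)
The plan is to transport the problem to Xu's theory of flat covers through the adjoint pair $(C\otimes_R-,\Hom_R(C,-))$. Write $N=\Hom_R(C,M)$ and let $\eta\colon C\otimes_RN\to M$ be the evaluation map, which is bijective by hypothesis. I would begin by recording three consequences of the standing hypotheses. Since $M$ is $C$-cotorsion we have $\ext^{\geq1}_R(C,M)=0$, so Hom-tensor adjointness gives an isomorphism $\ext^i_R(F',N)\cong\ext^i_R(F'\otimes_RC,M)$ for each flat $R$-module $F'$ and each $i\geq1$; thus $N$ is cotorsion. The triangle identities for the adjunction show the unit $N\to\Hom_R(C,C\otimes_RN)$ is split monic with retraction $\Hom_R(C,\eta)$, hence bijective; and $\ext^{\geq1}_R(C,C\otimes_RN)\cong\ext^{\geq1}_R(C,M)=0$.

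For~\eqref{lem0206a}, fix a flat cover $\psi\colon F\to N$ (flat covers exist and have cotorsion kernel; see~\cite{xu:fcm}). Since $\ker\psi$ and $N$ are cotorsion while $F$ is flat, Lemma~\ref{perp03}\eqref{perp03item2} shows $F$ is cotorsion, so $F\in\catfcot$ and $C\otimes_RF\in\catfccot$. Put $\alpha:=\eta\circ(C\otimes_R\psi)\colon C\otimes_RF\to M$; it is surjective because $\eta$ is bijective and $\psi$ is surjective. Using Hom-tensor adjointness and the isomorphisms $\Hom_R(C,C\otimes_RF)\cong F$ (valid as $F\in\catac$) and $\Hom_R(C,C\otimes_RN)\cong\Hom_R(C,M)=N$, one identifies $\Hom_R(F''\otimes_RC,\alpha)$ with $\Hom_R(F'',\psi)$ for each flat $F''$; since $\psi$ is a flat precover this is surjective, so $\alpha$ is a $\catfc$-precover, and a fortiori an $\catfccott$-precover. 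For the covering condition, suppose $f\colon C\otimes_RF\to C\otimes_RF$ satisfies $\alpha f=\alpha$; applying $\Hom_R(C,-)$ and the identifications above yields an endomorphism $g$ of $F$ corresponding to $\Hom_R(C,f)$ with $\psi g=\psi$, so $g$ is an automorphism since $\psi$ is a cover. As $\Hom_R(C,-)$ restricts to an equivalence between $\catbc$ and $\catac$ and $C\otimes_RF\in\catbc$, it follows that $f$ is an automorphism. Hence $\alpha$ is a $C$-flat cover and an $\catfccott$-cover of $M$. By uniqueness of covers, every $C$-flat cover of $M$ is isomorphic over $M$ to $\alpha$, so it too is an $\catfccott$-cover, with kernel isomorphic to $\ker\alpha$; and $\ker\alpha$ is $C$-cotorsion, as the long exact sequence of $\ext_R(F''\otimes_RC,-)$ applied to $0\to\ker\alpha\to C\otimes_RF\to M\to0$ shows, using $\ext^{\geq1}_R(F''\otimes_RC,M)=0$, the relation $\catfc\perp\catfccot$ from Lemma~\ref{lem0201}, and surjectivity of $\Hom_R(F''\otimes_RC,\alpha)$ to annihilate $\ext^1$.

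Part~\eqref{lem0206b} is then immediate: an $\catfccott$-precover $\beta\colon G\to M$ receives the surjection $\alpha$ through its defining property (since $C\otimes_RF\in\catfccot$), so $\beta$ is surjective. For~\eqref{lem0206c}, the additional hypothesis $\tor^R_{\geq1}(C,N)=0$ puts $N$ in $\catac$. Iterating flat covers produces a flat resolution of $N$ with augmented complex $F^+\colon\cdots\to F_1\to F_0\to N\to0$ in which every $F_i\in\catfcot$ and every syzygy $N_i$ is cotorsion (a kernel of a flat cover) and lies in $\catac$, which is closed under kernels of epimorphisms by Fact~\ref{projac}; since each $F_i\to N_i$ is a flat precover, $F^+$ is $\hom_R(\catf,-)$-exact. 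Because all syzygies lie in $\catac$, applying $C\otimes_R-$ preserves exactness, so $X:=C\otimes_RF$ is a $\catfccott$-resolution of $M$ (via $C\otimes_RN\cong M$). It is $\catfcc$-proper: the unit $F^+\to\Hom_R(C,C\otimes_RF^+)$ is a degreewise isomorphism (every term, including the degree $-1$ term $N$, lies in $\catac$), whence $\Hom_R(F'\otimes_RC,X^+)\cong\Hom_R(F',F^+)$ is exact for flat $F'$. Finally each $\ker(\partial^X_{i-1})$ is $C$-cotorsion: for $i=1$ it is $X_0\in\catfccot$, and for $i\geq2$ it is isomorphic to $C\otimes_RN_{i-1}$ (again since $\tor^R_{\geq1}(C,N_{i-1})=0$), which is $C$-cotorsion because $N_{i-1}$ is cotorsion and lies in $\catac$, by the adjointness computation of the first paragraph.

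I expect the main obstacle to be the verification that $\alpha$ is a genuine cover and not merely a precover: this requires pushing the relation $\alpha f=\alpha$ through the adjunction $(C\otimes_R-,\Hom_R(C,-))$ and back, which is legitimate only because $C\otimes_RF\in\catbc$ and $F\in\catac$, even though $M$ itself is not assumed to be in $\catbc$ in parts~\eqref{lem0206a} and~\eqref{lem0206b}. A secondary nuisance, most acute in~\eqref{lem0206c}, is the bookkeeping of which modules lie in $\catac$ or $\catbc$, so that each application of Hom-tensor adjointness, Tor-vanishing, and properness is justified.
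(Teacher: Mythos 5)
Your argument is correct, but it takes a genuinely different route from the paper's. The paper gets part~(a) almost for free from the literature: it invokes the existence of a $C$-flat cover $\vf\colon F\otimes_RC\to M$ (Holm--White), Wakamatsu's lemma for the $C$-cotorsion kernel, and the surjection $P\otimes_RC\onto M$ coming from the bijective evaluation map to force $\vf$ to be surjective; then Lemma~\ref{perp03} and Lemma~\ref{lem0201} force the source of the cover into $\catfccot$, so the $C$-flat cover itself is the desired $\catfccott$-cover. Part~(c) is then handled by citing the construction in Holm--White's Theorem~2. You instead build everything on the flat side: you show $N=\Hom_R(C,M)$ is cotorsion via the adjointness isomorphism $\ext^i_R(F',N)\cong\ext^i_R(F'\otimes_RC,M)$ (legitimate, since $C$-cotorsion gives $\ext^{\geq 1}_R(C,M)=0$; the paper uses the same device in Proposition~\ref{prop0701}), transport a flat cover of $N$ across the adjunction $(C\otimes_R-,\Hom_R(C,-))$, and verify the precover and cover conditions by hand, using $F\in\catac$, $C\otimes_RF\in\catbc$, and the observation that the triangle identity plus bijectivity of the evaluation map force the unit at $N$ to be an isomorphism even though $M$ is not assumed to lie in $\catbc$ -- which is exactly the delicate point, and you handle it correctly. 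The clause about \emph{every} $C$-flat cover then follows from uniqueness of covers, whereas the paper proves it directly for the given cover; the two are equivalent. For~(c) your iterated flat covers of $N$, tensored with $C$, reproduce in detail what the paper delegates to Holm--White, and the properness check via the degreewise unit isomorphism is sound; there is only a harmless indexing slip at the end (for $i\geq 2$ the kernel $\ker(\partial^X_{i-1})$ is $C\otimes_RN_i$ rather than $C\otimes_RN_{i-1}$ in your notation), which does not affect the argument since every $C\otimes_RN_j$ is $C$-cotorsion by the same computation. The trade-off: the paper's proof is shorter and leans on existing relative-cover results, while yours is more self-contained, avoids Holm--White Prop.~5.3(a), Thm.~2 and Takahashi--White (2.2.a), and makes explicit that the $\catfccott$-cover of $M$ is $C\otimes_R(\text{flat cover of }\Hom_R(C,M))$, at the cost of heavier Auslander/Bass class bookkeeping.
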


\begin{proof}
\eqref{lem0206a}
The module $M$ has a $C$-flat cover $\vf\colon F\otimes_RC\to M$
by~\cite[Prop.\ 5.3.a]{holm:fear}, and $\ker(\vf)$ is $C$-cotorsion
by~\cite[(2.1.1)]{xu:fcm}. Furthermore, the bijectivity of the evaluation
map
$C\otimes_R\Hom_R(C,M)\to M$ implies that there is a 
projective $R$-module $P$ and a surjective
map $\vf'\colon P\otimes_RC\onto M$ by~\cite[(2.2.a)]{takahashi:hasm}.
The fact that $\vf$ is a precover provides a map $f\colon P\otimes_R C\to F\otimes_R C$
such that $\vf'=\vf f$. Hence, the surjectivity of $\vf'$ implies that $\vf$
is surjective.  
It follows from Lemma~\ref{perp03}\eqref{perp03item1} that
$F\otimes_R C$ is $C$-cotorsion, and so
$F\otimes_R C\in\catfccot$ by Lemma~\ref{lem0201}.  
Since $\vf$ is a $C$-flat cover and
$\catfccot\subseteq\catfc$, we conclude that $\vf$
is an $\catfccott$-cover.

\eqref{lem0206b}
This follows as in part~\eqref{lem0206a} because $M$ has a surjective
$\catfccott$-cover.

\eqref{lem0206c}
Using parts~\eqref{lem0206a} and~\eqref{lem0206b},
the argument of~\cite[Thm.\ 2]{holm:fear} shows how to construct
a resolution with the desired properties.
\end{proof}

The final three results of this section contain our main conclusions 
for $\finrescatfccot$.
The first of these extends Lemma~\ref{lem0201}.

\begin{prop} \label{lem0401}
Let $C$ and $M$ be  $R$-modules with $C$ semidualizing,
and let $n\geq 0$.
The following conditions are equivalent:
\begin{enumerate}[\qquad\rm(i)]
\item \label{lem0401a}
$\fccotpd_R(M)\leq n$;
\item \label{lem0401b}
$M\in\catbc$ and $\fcotpd_R(\hom_R(C,M))\leq n$;
\item \label{lem0401e}
$\fcotpd_R(\hom_R(C,M))\leq n$;
\item \label{lem0401c}
$M\cong C\otimes_R K$ for some $R$-module $K$ such that
$\fcotpd_R(K)\leq n$;
\item \label{lem0401d}
$\fcpd_R(M)\leq n$ and $\catfc\perp M$.
\end{enumerate}
\end{prop}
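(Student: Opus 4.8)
The plan is to leverage the Foxby equivalence between $\catac$ and $\catbc$ furnished by the functors $C\otimes_R-$ and $\hom_R(C,-)$, bootstrapping from the $n=0$ case already proved in Lemma~\ref{lem0201}. The organizing device will be a short ``dictionary'': if $M\in\catbc$, then applying $\hom_R(C,-)$ transports a $\catfccott$-resolution (resp.\ $\catfc$-resolution) of $M$ of length $\leq m$ to a $\catfcott$-resolution (resp.\ flat resolution) of $\hom_R(C,M)$ of length $\leq m$; conversely, if $K\in\catac$, then applying $C\otimes_R-$ transports a $\catfcott$-resolution (resp.\ flat resolution) of $K$ of length $\leq m$ to a $\catfccott$-resolution (resp.\ $\catfc$-resolution) of $C\otimes_R K$ of the same length. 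To justify the dictionary I would note that $\catac$ and $\catbc$ are closed under kernels of epimorphisms and cokernels of monomorphisms by Fact~\ref{projac}, so in any such finite resolution all syzygies stay in the relevant class; that $\hom_R(C,-)$ is exact on short exact sequences with all terms in $\catbc$ (as $\ext^1_R(C,-)$ vanishes there) and dually $C\otimes_R-$ is exact on short exact sequences in $\catac$ (as $\tor^R_1(C,-)$ vanishes there); and that the terms transform correctly by Lemma~\ref{lem0201} in the ``cot'' case and by the defining isomorphisms of $\catac$, $\catbc$ together with $\hom_R(C,F\otimes_R C)\cong F$ for flat $F$ in the plain flat case.

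Granting the dictionary, the equivalences among \eqref{lem0401a}, \eqref{lem0401b}, \eqref{lem0401e}, \eqref{lem0401c} go as follows. For \eqref{lem0401a}$\Rightarrow$\eqref{lem0401b}: condition \eqref{lem0401a} yields $\fcpd_R(M)\leq n<\infty$, hence $M\in\catbc$ by Fact~\ref{projac}, and the dictionary turns the $\catfccott$-resolution of $M$ into a $\catfcott$-resolution of $\hom_R(C,M)$ of length $\leq n$. The implication \eqref{lem0401b}$\Rightarrow$\eqref{lem0401e} is immediate, while \eqref{lem0401e}$\Rightarrow$\eqref{lem0401b} holds because $\fcotpd_R(\hom_R(C,M))\leq n$ forces $\hom_R(C,M)$ to have finite flat dimension, so $\hom_R(C,M)\in\catac$ and thus $M\in\catbc$ by Fact~\ref{projac}. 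For \eqref{lem0401b}$\Rightarrow$\eqref{lem0401c} take $K=\hom_R(C,M)$ and use $M\cong C\otimes_R\hom_R(C,M)$. For \eqref{lem0401c}$\Rightarrow$\eqref{lem0401a}: $\fcotpd_R(K)\leq n$ gives $K$ finite flat dimension, so $K\in\catac$ by Fact~\ref{projac}, and the dictionary turns a $\catfcott$-resolution of $K$ into a $\catfccott$-resolution of $C\otimes_R K\cong M$ of length $\leq n$.

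It remains to handle \eqref{lem0401a}$\iff$\eqref{lem0401d}. The forward direction is easy: $\catfccott\subseteq\catfc$ gives $\fcpd_R(M)\leq n$, and $\catfc\perp M$ follows by induction on $n$, the base case being Lemma~\ref{lem0201} and the inductive step using an exact sequence $0\to M'\to X_0\to M\to 0$ with $X_0\in\catfccott$, $\fccotpd_R(M')\leq n-1$, together with $\catfc\perp\catfccott$ and Lemma~\ref{perp03}\eqref{perp03item2}. For \eqref{lem0401d}$\Rightarrow$\eqref{lem0401a}: from $\fcpd_R(M)<\infty$ we get $M\in\catbc$ by Fact~\ref{projac}, and $\catfc\perp M$ says precisely that $M$ is $C$-cotorsion, so (since $M\in\catbc$ supplies $\tor^R_{\geq1}(C,\hom_R(C,M))=0$ and bijectivity of the evaluation map) Lemma~\ref{lem0206}\eqref{lem0206c} gives an $\catfccott$-resolution of $M$ all of whose syzygies are $C$-cotorsion. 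Let $N$ be the $n$th syzygy. Applying $\hom_R(C,-)$ to the $\catfc$-part of this resolution and invoking the dictionary together with $\fcpd_R(M)\leq n$ shows that the corresponding syzygy $\hom_R(C,N)$ is flat; since $N\in\catbc$ this forces $N\cong C\otimes_R\hom_R(C,N)\in\catfc$, hence $N\in\catfccott$ by Lemma~\ref{lem0201}, and truncating at $N$ gives $\fccotpd_R(M)\leq n$.

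I expect the main obstacle to be exactly this last step: converting the possibly infinite $\catfccott$-resolution produced by Lemma~\ref{lem0206} into one of length $\leq n$ requires checking that the $n$th syzygy, which is $C$-cotorsion by that lemma, is also $C$-flat, and this forces one to push the flat-dimension bound $\fcpd_R(M)\leq n$ through the Foxby equivalence to bound the flat dimension of the matching syzygy over $\catac$. The only other point needing care is a clean verification of the dictionary itself, especially the exactness of $C\otimes_R-$ and $\hom_R(C,-)$ on the syzygy modules appearing in these finite resolutions.
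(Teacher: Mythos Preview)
Your proposal is correct and follows essentially the same approach as the paper: both use Foxby equivalence to pass between $\catfccott$-resolutions of $M$ and $\catfcott$-resolutions of $\hom_R(C,M)$, and both handle the hard implication \eqref{lem0401d}$\Rightarrow$\eqref{lem0401a} by invoking Lemma~\ref{lem0206}\eqref{lem0206c} and showing the $n$th syzygy is $C$-flat via the flat-dimension bound on $\hom_R(C,M)$. The only cosmetic differences are organizational (the paper cycles \eqref{lem0401a}$\Rightarrow$\eqref{lem0401b}$\Rightarrow$\eqref{lem0401c}$\Rightarrow$\eqref{lem0401d}$\Rightarrow$\eqref{lem0401a}, whereas you treat \eqref{lem0401d} separately) and that the paper obtains $\catfc\perp M$ from Lemma~\ref{gencat01} rather than your explicit induction, which amounts to the same thing.
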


\begin{proof}
\eqref{lem0401a}$\implies$\eqref{lem0401b}
Since $\fccotpd_R(M)\leq n<\infty$, we have $M\in\catbc$
by Fact~\ref{projac}.
Let $X$ be an $\catfccott$-resolution of $M$
such that $X_i=0$ when $i>n$. for each $i$, let $F_i\in\catfcot$
such that $X_i\cong F_i\otimes_R C$.  
Since each $F_i$ is in $\catac$, we have
$$\Hom_R(C,X)_i\cong\Hom_R(C,X_i) \cong\Hom_R(C,F_i\otimes_R C)
\cong F_i.$$
A standard argument using the conditions $M,X_i\in\catbc$ shows that
$\Hom_R(C,X)$ is an $\catfcott$-resolution of $\hom_R(C,M)$
such that $\Hom_R(C,X)_i=0$ when $i>n$.
The inequality $\fcotpd_R(\hom_R(C,M))\leq n$
then follows.

\eqref{lem0401b}$\implies$\eqref{lem0401c}
The condition $M\in\catbc$ implies $M\cong C\otimes_R\hom_R(C,M)$,
and so $K=\hom_R(C,M)$ satisfies the desired conclusions.

\eqref{lem0401c}$\implies$\eqref{lem0401d}
Let $F$ be an $\catfcott$-resolution of $K$
such that $F_i=0$ when $i>n$. 
Using the condition $K,F_i\in \catac$,
a standard argument shows
that $C\otimes_R F$ 
is an $\catfccott$-resolution of $C\otimes_R K\cong M$.
Hence,  this resolution yields $\fcpd_R(M)\leq\fccotpd_R(M)\leq n$.
By Lemma~\ref{lem0201}, we have $\catfc\perp\catfccot$,
and so Lemma~\ref{gencat01} implies $\catfc\perp\finrescatfccot$;
in particular $\catfc\perp M$.

\eqref{lem0401d}$\implies$\eqref{lem0401a}
The assumption $\fcpd_R(M)\leq n$ implies $M\in\catbc$ by Fact~\ref{projac}, and so
$\ext^{\geq 1}_R(C,M)=0$. 
Lemma~\ref{lem0206}\eqref{lem0206c}
implies that $M$ has an $\catfcc$-proper $\catfccott$-resolution $X$
such that 
$K_i=\ker(\partial^X_{i-1})$ 
is $C$-cotorsion for each $i$. 
In particular, the truncated complex
$$X'=\qquad 0\to K_{n}\to X_{n-1}\to\cdots\to X_0\to M\to 0$$
is exact and $\hom_R(C,-)$-exact.
Since $\fcpd_R(M)\leq n$, 
the proof of the implication
\eqref{lem0401a}$\implies$\eqref{lem0401b}
shows that $\fd_R(\Hom_R(C,M))\leq n$.  
Since each $R$-module $\hom_R(C,X_i)$ is flat by Lemma~\ref{lem0201},
the exact complex $\hom_R(C,X')$ is a truncation of an augmented 
flat resolution of $\Hom_R(C,M)$. It follows that
$\hom_R(C,K_n)$ is flat, and so
$K_n\in\catfc$ by~\cite[Thm.\ 1]{holm:fear}.
Hence 
$X'$ is an augmented $\catfccott$-resolution of $M$,
and so $\fccotpd_R(M)\leq n$.

\eqref{lem0401b}$\iff$\eqref{lem0401e} follows from Fact~\ref{projac}
because $\finrescatfcot\subseteq\catac$.
\end{proof}

\begin{lem} \label{lem0211}
Let $C$ be a semidualizing $R$-module.
If $\fccotpd_R(M)<\infty$, then any bounded $\catfccott$-resolution
$X$ of $M$ is $\catfcc$-proper.
\end{lem}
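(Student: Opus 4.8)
The plan is to unwind the definition of ``$\catfcc$-proper'' and then invoke Lemma~\ref{lem0216}\eqref{lem0216b}. By the ``defined dually'' clause of Definition~\ref{notation05}, an $\catfccott$-resolution $X$ of $M$ is $\catfcc$-proper precisely when its augmented resolution
$$X^+ = \cdots\to X_1\to X_0\to M\to 0$$
is $\hom_R(\catfc,-)$-exact (we write $\catfc$ for the category $\catfcc$ of $C$-flat modules). Since $X$ is bounded we may fix $n$ with $X_i=0$ for $i>n$; then $X^+$ is a bounded, exact $R$-complex, the exactness being built into the notion of an augmented resolution.

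Now I would apply Lemma~\ref{lem0216}\eqref{lem0216b} to the complex $X^+$ with $\catv=\catfc$. Its ``$X_i=0$ for $i\gg0$'' hypothesis holds because $X$ is bounded above, so the lemma will yield $\hom_R(\catfc,-)$-exactness once we check that $\catfc\perp N$ for every term $N$ of $X^+$. For the terms $X_i$ with $i\ge 0$ this is Lemma~\ref{lem0201}, which gives $\catfc\perp\catfccot$ and hence $\catfc\perp X_i$ since $X_i\in\catfccot$. For the one remaining term $M$, the hypothesis $\fccotpd_R(M)<\infty$ is exactly condition~\eqref{lem0401a} of Proposition~\ref{lem0401} with $n=\fccotpd_R(M)$, so the equivalent condition~\eqref{lem0401d} holds, and in particular $\catfc\perp M$. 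Thus every term of the bounded exact complex $X^+$ is right orthogonal to $\catfc$, so Lemma~\ref{lem0216}\eqref{lem0216b} gives that $X^+$ is $\hom_R(\catfc,-)$-exact, i.e.\ $X$ is $\catfcc$-proper.

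I do not expect a genuine obstacle; the argument is pure bookkeeping. The two points needing care are (i) checking that dualizing Definition~\ref{notation05} turns ``$\catfcc$-proper'' into ``$X^+$ is $\hom_R(\catfc,-)$-exact'' as used above, and (ii) observing that it is the boundedness of $X$---rather than any a priori information about its syzygies---that activates the relevant clause of Lemma~\ref{lem0216}\eqref{lem0216b}. One could also avoid (ii) entirely: since $X_i=0$ for $i>n$ and $X^+$ is exact, $\ker(\partial^X_{n-1})\cong X_n\in\catfccot$, and a short downward induction using Lemma~\ref{perp03}\eqref{perp03item2} shows every syzygy $\ker(\partial^X_i)$ is right orthogonal to $\catfc$, after which the conclusion follows as before.
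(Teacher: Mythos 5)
Your proof is correct and follows essentially the same route as the paper: both verify $\catfc\perp X_i$ and $\catfc\perp M$ (the paper cites Proposition~\ref{lem0401} for both, you use Lemma~\ref{lem0201} for the terms and Proposition~\ref{lem0401} for $M$) and then apply the boundedness clause of Lemma~\ref{lem0216}\eqref{lem0216b} to the exact augmented complex $X^+$. The alternative syzygy induction you sketch at the end is fine but unnecessary, exactly as you say.
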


\begin{proof}
Observe that $\catfc\perp X_i$ for all $i$ 
and $\catfc\perp M$
by Proposition~\ref{lem0401}.
So, the complex $X^+$ is exact and such that
$(X^+)_i=0$ for $i\gg 0$ and $\catfc\perp (X^+)_i$.
Hence, Lemma~\ref{lem0216}\eqref{lem0216b}
implies that $X^+$ is
$\hom_R(\catfcc,-)$-exact.
\end{proof}

\begin{prop} \label{lem0210}
Let $C$ be a semidualizing $R$-module.
The category $\finrescatfccot$ is
closed under extensions, cokernels of monomorphisms and  summands.
\end{prop}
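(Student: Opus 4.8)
The plan is to reduce everything, via Proposition~\ref{lem0401}, to the (well understood) behaviour of ordinary injective dimension transported across Pontryagin duality. First I would record that the equivalence of parts~\eqref{lem0401a} and~\eqref{lem0401d} of Proposition~\ref{lem0401} identifies $\finrescatfccot$ with the intersection $\finrescatfc\cap\catfc^{\perp}$, and that by Lemmas~\ref{lem0201} and~\ref{gencat01} every module of finite $\catfccott$-projective dimension already lies in $\catfc^{\perp}$. Hence it suffices to prove separately that $\catfc^{\perp}$ and $\finrescatfc$ are \emph{each} closed under extensions, cokernels of monomorphisms, and summands; the three corresponding statements for the intersection $\finrescatfccot$ then follow formally (for a short exact sequence with two terms in $\catx\cap\caty$, closure of $\catx$ and of $\caty$ under the relevant operation puts the third term in $\catx\cap\caty$).

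For $\catfc^{\perp}$ the three closure properties are immediate from the long exact sequence in $\ext_R(F\otimes_RC,-)$ for flat $F$ (essentially Lemma~\ref{perp03}\eqref{perp03item1}): given $0\to M_1\to M_2\to M_3\to0$, vanishing of $\ext^{\geq1}_R(\catfc,-)$ on any two of the three modules forces it on the third, the point being that one has vanishing in \emph{every} positive degree; closure under summands is additivity of Ext. For $\finrescatfc$ I would dualize: Lemma~\ref{lem0701}\eqref{lem0701a} gives $\icid_R(M^{*})=\fcpd_R(M)$, so $M\in\finrescatfc$ precisely when $M^{*}$ has finite $\catic$-injective dimension, and since $(-)^{*}$ is an exact contravariant functor sending $0\to M_1\to M_2\to M_3\to0$ to $0\to M_3^{*}\to M_2^{*}\to M_1^{*}\to0$, closure of $\finrescatfc$ under extensions, cokernels of monomorphisms and summands is equivalent to closure of the class of modules of finite $\catic$-injective dimension under extensions, kernels of epimorphisms and summands. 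That I would in turn deduce from the classical fact that finite injective dimension is preserved under these operations, together with the description of $\icid_R(N)<\infty$ as ``$N\in\catbc$ and $\id_R(C\otimes_RN)<\infty$'' (from \cite{holm:fear,takahashi:hasm}), the closure properties of $\catbc$ in Fact~\ref{projac}, and exactness of $C\otimes_R-$ on short exact sequences of modules in $\catbc$ (the Tor-vanishing built into $\catbc$).

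The main difficulty here is bookkeeping rather than any single hard step: at each reduction one must check that the ambient class in play ($\catfc^{\perp}$, $\catbc$, modules of finite injective dimension) is closed under the precise operation needed, and that the functors $(-)^{*}$, $\Hom_R(C,-)$ and $C\otimes_R-$ genuinely preserve exactness of the sequences involved --- which is exactly what the Ext- and Tor-vanishing defining the Auslander and Bass classes provides. The one genuinely subtle point is closure of $\catfc^{\perp}$ under cokernels of monomorphisms: this uses that membership in $\finrescatfccot$ kills \emph{all} higher $\ext_R(\catfc,-)$, a strictly stronger condition than being merely $C$-cotorsion (which controls only $\ext^1$), and it is Lemmas~\ref{lem0201} and~\ref{gencat01} in combination with Proposition~\ref{lem0401} that supply this. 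An alternative to the Pontryagin-duality step is to invoke the $\catfc$-analogue of Proposition~\ref{lem0401} --- that $\fcpd_R(M)<\infty$ iff $M\in\catbc$ and $\fd_R(\Hom_R(C,M))<\infty$, available from \cite{takahashi:hasm} --- and reduce directly to the (elementary) closure properties of the category of modules of finite flat dimension; either route works.
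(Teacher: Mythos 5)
Your proposal is essentially the paper's own argument: Proposition~\ref{lem0401} splits membership in $\finrescatfccot$ into finiteness of $\fcpd_R$ together with $\catfc\perp M$, the perpendicularity condition is handled by Lemma~\ref{perp03} (note the relevant item is~\eqref{perp03item2}, not~\eqref{perp03item1}, since here Ext maps \emph{into} the modules in question), and the finiteness of $\fcpd_R$ is transported through Pontryagin duality via Lemma~\ref{lem0701} to finiteness of $\caticc$-injective dimension, whose closure under extensions and kernels of epimorphisms the paper simply cites from \cite[(3.4)]{takahashi:hasm}. The one slip is in your re-derivation of that closure: $\caticc$-injective dimension pairs with the Auslander class, not the Bass class --- the correct characterization is $\icid_R(N)<\infty$ if and only if $N\in\catac$ and $\id_R(C\otimes_RN)<\infty$ (Fact~\ref{projac} already places modules of finite $\caticc$-injective dimension in $\catac$, and it is the Tor-vanishing $\tor^R_{\geq 1}(C,N)=0$ built into $\catac$ that makes $C\otimes_R-$ exact on the sequences you need). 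The Bass-class statement you wrote belongs instead to your alternative route, $\fcpd_R(M)=\fd_R(\Hom_R(C,M))$ for $M\in\catbc$, which is correct, uses the closure properties of $\catbc$ from Fact~\ref{projac} and of finite flat dimension, and avoids Pontryagin duals altogether.
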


\begin{proof}
Consider an exact sequence
$$0\to M_1\to M_2\to M_3\to 0$$
such that $\fccotpd_R(M_1)$ and $\fccotpd_R(M_3)$ are finite.
To show that $\finrescatfccot$ is
closed under extensions 
we need to show that $\fccotpd_R(M_2)$ is finite.

The condition $\fccotpd_R(M_1)<\infty$ implies
$\icid(M_1^*)=\fcpd_R(M_1)<\infty$ by Lemma~\ref{lem0701}\eqref{lem0701a}
and Proposition~\ref{lem0401};
and similarly $\icid(M_3^*)<\infty$.
From~\cite[(3.4)]{takahashi:hasm} we know that the category of
$R$-modules of finite $\caticc$-injective dimension is closed under extensions.
Using the dual exact sequence
$$0\to M_3^*\to M_2^*\to M_1^*\to 0$$
we conclude
that $\icid(M_2^*)$ is  finite.
Thus, Lemma~\ref{lem0701}\eqref{lem0701a}
implies that
that  $\fcpd_R(M_2)$ is finite.

Since $\fccotpd_R(M_1)<\infty$, Proposition~\ref{lem0401}
implies $\catfc\perp M_1$; and similarly $\catfc\perp M_3$.
Thus, we have $\catfc\perp M_2$
by Lemma~\ref{perp03}\eqref{perp03item2}.
Combining this with the previous paragraph,
Proposition~\ref{lem0401} implies that $\fccotpd_R(M_2)<\infty$.

The proof of the fact that $\finrescatfccot$ is
closed under cokernels of monomorphisms is similar.
The fact  that $\finrescatfccot$ is
closed under  summands is even easier to prove using
the natural isomorphism $(M_1\oplus M_2)^*\cong M_1^*\oplus M_2^*$.
\end{proof}

\section{Weak AB-Context} \label{sec03}

Let $C$ be a semidualizing $R$-module.
The point of this section is to show that the triple
$(\catgfc,\finrescatfccot,\catfccot)$ is a weak AB-context,
and to document the immediate consequences;
see Theorem~\ref{thma} and Corollary~\ref{cor0301}.
We begin the section with two results  modeled on~\cite[(3.22) and (3.6)]{holm:ghd}.

\begin{lem}\label{extgfcvan}
If $C$ is a semidualizing $R$-module, then
$\catgfc\perp\finrescatfccot$.
\end{lem}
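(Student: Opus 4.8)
The plan is to reduce immediately to the statement $\catgfc\perp\catfccot$. Once that is known, Lemma~\ref{gencat01}, applied with $\catx=\catgfc$ and $\caty=\catfccot$, gives $\catgfc\perp\finrescatfccot$, which is the assertion. So fix $M\in\catgfc$ and $N\in\catfccot$; we must prove $\ext^{\geq1}_R(M,N)=0$.

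The main idea is to realize $N$ as a direct summand of the Pontryagin dual of a $C$-injective module, which converts the Ext-vanishing into the defining Tor-vanishing for $\text{G}_C$-flat modules. Write $N\cong C\otimes_RF$ with $F$ flat and cotorsion. The first step is to show that $F$ is a direct summand of its double character module $F^{**}=(F^*)^*$. By Lemma~\ref{lem0202}, the module $I:=F^*$ is $R$-injective and $F^{**}=I^*$ is $R$-flat. The canonical biduality map $F\to F^{**}$ is a pure monomorphism whose cokernel $F^{**}/F$ is therefore flat (a pure quotient of the flat module $F^{**}$); since $F$ is cotorsion, $\ext^1_R(F^{**}/F,F)=0$, so $0\to F\to F^{**}\to F^{**}/F\to 0$ splits and $F$ is a summand of $I^*$. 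Hence $N\cong C\otimes_RF$ is a direct summand of $C\otimes_RI^*$, and Hom-evaluation (exactly as in the proof of Lemma~\ref{pdual}\eqref{pduala}) identifies $C\otimes_RI^*\cong(\Hom_R(C,I))^*=J^*$, where $J:=\Hom_R(C,I)$ is $C$-injective. Thus $N$ is a direct summand of $J^*$.

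It remains to check $\ext^{\geq1}_R(M,J^*)=0$. Applying $\Hom_R(-,J^*)$ to a projective resolution of $M$, invoking Hom-tensor adjointness, and using that $\qmodz$ is injective over $\bbz$ (so that $(-)^*$ is exact), we obtain natural isomorphisms $\ext^i_R(M,J^*)\cong(\tor_i^R(M,J))^*$ for every $i$, precisely as in Lemma~\ref{pdual}\eqref{pdualc}. Since $M$ is $\text{G}_C$-flat and $J$ is $C$-injective, Fact~\ref{fact0201}\eqref{fact0201b} yields $\tor^R_{\geq1}(M,J)=0$; hence $\ext^{\geq1}_R(M,J^*)=0$, and so $\ext^{\geq1}_R(M,N)=0$ by additivity of $\ext$ over direct summands. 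This proves $\catgfc\perp\catfccot$ and completes the argument. The only step that takes any real care is the splitting $F\mid F^{**}$, resting on the standard facts that the biduality map is pure and that purity carries flatness to the quotient; the rest is formal manipulation of duality isomorphisms together with the $\tor$-vanishing property of $\text{G}_C$-flat modules.
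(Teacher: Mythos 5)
Your proof is correct and follows essentially the same route as the paper: reduce to $\catgfc\perp\catfccot$ via Lemma~\ref{gencat01}, identify $N$ as a direct summand of the dual of a $C$-injective module (your $J^*$ is exactly the paper's $N^{**}$), and conclude from the Ext--Tor duality together with the Tor-vanishing of Fact~\ref{fact0201}. The only cosmetic difference is that you spell out the splitting $F\mid F^{**}$ via purity and cotorsion, where the paper simply cites the proof of \cite[(3.22)]{holm:ghd} for that step.
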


\begin{proof}
By Lemma~\ref{gencat01} it suffices to show
$\catgfc\perp\catfccot$.
Fix modules $M\in\catgfc$ and 
$N\in\catfccot$. 
By Lemma~\ref{pdual}, we know that
the Pontryagin dual $N^*$ is $C$-injective.
Hence, for $i\geq 1$, the vanishing in the next sequence is from
Fact~\ref{fact0201}
$$
\ext^i_R(M,N^{**})
\cong\ext^i_R(M,\pdual{N^*})
\cong\pdual{\tor_R^i(M,N^*)}
=0.
$$
The second isomorphism is a form of Hom-tensor adjointness using the
fact that $\qmodz$ is injective over $\bbz$.
To finish the proof, it suffices to show that $N$ is a summand of $N^{**}$;
then the last sequence shows $\ext^{\geq 1}_R(M,N)=0$.
Write $N\cong C\otimes_R F$ for some flat cotorsion $R$-module $F$, and use
Hom-tensor adjointness to conclude
$$N^*\cong
\pdual{C\otimes_R F}
\cong\hom_R(C,\pdual{F}).$$
Lemma~\ref{lem0202}\eqref{lem0202b} implies that $\pdual{F}$ is injective,
so the proof of Lemma~\ref{pdual}\eqref{pduala} 
explains the second isomorphism in the next sequence
$$N^{**}\cong\hom_R(C,\pdual{F})^*
\cong C\otimes_R\pdual{\pdual{F}}
\cong C\otimes_R F^{**}.
$$
The proof of~\cite[(3.22)]{holm:ghd} shows that 
$F$ is a summand of $F^{**}$,
and it follows that $N\cong C\otimes_R F$ is a summand of 
$C\otimes_R F^{**}\cong N^{**}$, as desired.
\end{proof}

\begin{lem}\label{pdual2}
Let $C$ be a semidualizing $R$-module. If $M$ is an $R$-module, then
$M$ is in $\catgfc$ if and only if 
its Pontryagin dual $M^*$ is
in $\catgic$.
\end{lem}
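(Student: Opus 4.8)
The plan is to reduce the statement to the classical case $C=R$ over the trivial extension $S=R\ltimes C$; this disposes of both implications at once. Since $C$ is a finitely generated $R$-module, $S$ is commutative and noetherian. Regard each $R$-module as an $S$-module via the surjection $S\to R$, as in Fact~\ref{fact0201}. If the $S$-action on an abelian group $N$ factors through $S\to R$, then so does the $S$-action on $\Hom_{\bbz}(N,\qmodz)$, and the resulting $R$-module is $N^*$; hence the Pontryagin dual of the $S$-module $M$ is exactly the $S$-module obtained from the $R$-module $M^*$. Now Fact~\ref{fact0201}, that is, \cite[(2.13) and (2.15)]{holm:smarghd}, gives $M\in\catgfc\iff M\in\cat{GF}(S)$ and $M^*\in\catgic\iff M^*\in\cat{GI}(S)$. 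As $S$ is noetherian, hence coherent, the classical Gorenstein-flat/Gorenstein-injective duality over $S$---namely \cite[(3.6)]{holm:ghd}, the result on which this lemma is modeled---gives $M\in\cat{GF}(S)\iff M^*\in\cat{GI}(S)$. Composing these three equivalences finishes the proof.

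A direct proof of the forward implication is also available and worth recording, since it avoids the passage to $S$. Given a complete $\catff\catfcc$-resolution $Z$ of $M$, I would check that $\shift^{-1}Z^*$ is a complete $\caticc\catii$-resolution of $M^*$. Indeed, $Z^*$ is exact because $\qmodz$ is injective over $\bbz$; the degree conditions hold because the Pontryagin dual of a flat module is injective (Lemma~\ref{lem0202}\eqref{lem0202b}) while that of a $C$-flat module is $C$-injective (Lemma~\ref{pdual}\eqref{pdualb}); and $Z^*$ is $\Hom_R(\caticc,-)$-exact since, for each $K\in\caticc$, the Hom-tensor adjointness isomorphism $\Hom_R(K,Z^*)\cong\pdual{K\otimes_R Z}$ converts the $-\otimes_R\caticc$-exactness of $Z$ into the required exactness (again using that $\qmodz$ is injective over $\bbz$). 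A short diagram chase on the differentials then identifies $M^*$ with the cokernel of the degree-one differential of $\shift^{-1}Z^*$, so this complex is a complete $\caticc\catii$-resolution of $M^*$.

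The step I expect to be the main obstacle is the reverse implication. Dualizing a complete $\caticc\catii$-resolution of $M^*$ in the naive way---mirroring the previous paragraph---produces a complete $\catff\catfcc$-resolution of $M^{**}$ rather than of $M$, and in general $M$ is not a direct summand of $M^{**}$, so closure of $\catgfc$ under summands does not salvage the argument. This is precisely the point at which coherence of the ground ring is genuinely used in \cite[(3.6)]{holm:ghd}, for instance through the purity of the canonical map $M\to M^{**}$. Rather than reproduce that argument in the present $C$-relative setting, I would simply invoke it over the noetherian ring $S=R\ltimes C$ as in the first paragraph.
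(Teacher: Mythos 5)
Your proposal is correct and follows essentially the same route as the paper: pass to the trivial extension $R\ltimes C$, identify $\catgfc$ and $\catgic$ with $\cat{GF}(R\ltimes C)$ and $\cat{GI}(R\ltimes C)$ via Fact~\ref{fact0201}, and invoke \cite[(3.6)]{holm:ghd} over that noetherian ring. Your extra check that the Pontryagin dual is the same whether taken over $R$ or over $R\ltimes C$ is a sensible detail the paper leaves implicit.
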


\begin{proof} 
Consider the trivial extension $R\ltimes C$ from Fact~\ref{fact0201}.
By~\cite[(3.6)]{holm:ghd} we know that
$M$ is in $\catgff(R\ltimes C)$ if and only if 
$M^*$ is
in $\catgii(R\ltimes C)$.
Also $M$ is in $\catgff(R\ltimes C)$ if and only if 
$M$ is in $\catgfc$, and $M^*$ is in $\catgii(R\ltimes C)$ if and only if 
$M^*$ is in $\catgic$ by Fact~\ref{fact0201}. Hence, the equivalence.
\end{proof}

The following result establishes Theorem~\ref{thma}\eqref{thma1}.

\begin{prop}\label{gfcprojres}
Let $C$ be a semidualizing $R$-module.
The category 
$\catgfc$ is closed under kernels of epimorphisms,
extensions and  summands.
\end{prop}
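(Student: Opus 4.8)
The plan is to transfer the statement through the Pontryagin dual to the corresponding closure properties for $\catgic$, which are known (or have standard proofs), using Lemma~\ref{pdual2} as the bridge. Concretely, I would argue that each of the three closure properties for $\catgfc$ is equivalent, via $M \mapsto M^*$, to the corresponding property for $\catgic$, and then invoke the Gorenstein-injective analogues.

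First I would recall the exactness of $(-)^*=\Hom_{\bbz}(-,\qmodz)$: since $\qmodz$ is injective over $\bbz$, a sequence $0\to M'\to M\to M''\to 0$ of $R$-modules is exact if and only if its dual $0\to M''^*\to M^*\to M'^*\to 0$ is exact, and splitting is likewise preserved in both directions. Combined with Lemma~\ref{pdual2} ($M\in\catgfc \iff M^*\in\catgic$) and the fact that $M$ is always a summand of $M^{**}$ (used already in the proof of Lemma~\ref{extgfcvan}), this lets me pass each property back and forth.

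For \emph{kernels of epimorphisms}: given an exact sequence $0\to M'\to M\to M''\to 0$ with $M,M''\in\catgfc$, the dual sequence $0\to M''^*\to M^*\to M'^*\to 0$ is exact with $M''^*,M^*\in\catgic$, so $M'^*\in\catgic$ because $\catgic$ is closed under cokernels of monomorphisms (the Gorenstein-injective analogue; this is the dual of the fact that $\catgfc=\cat{GF}(R\ltimes C)$ is closed under kernels of epimorphisms, or can be quoted directly for $\text{G}_C$-injectives). Then Lemma~\ref{pdual2} gives $M'^{**}\in\catgfc$, and since $M'$ is a summand of $M'^{**}$, closure under summands (proved next) yields $M'\in\catgfc$. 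For \emph{extensions}: dualize to get $M'^*,M''^*\in\catgic$, use that $\catgic$ is closed under extensions, conclude $M^{**}\in\catgfc$, and again pass to the summand $M$. For \emph{summands}: if $M'\oplus M''\in\catgfc$ then $(M'\oplus M'')^*\cong M'^*\oplus M''^*\in\catgic$, and $\catgic$ is closed under summands, so $M'^*,M''^*\in\catgic$, hence $M'^{**},M''^{**}\in\catgfc$; as $M'$ and $M''$ are summands of their double duals, and a direct summand of a summand is a summand, closure under summands needs to be established without circularity --- the cleanest route is to prove closure under summands \emph{first}, directly: if $M\oplus N\in\catgfc$, splice together complete $\catff\catfcc$-resolutions so that the idempotent $M\oplus N\to M\oplus N$ lifts, or simply observe that $M^*$ is a summand of $(M\oplus N)^*\in\catgic$ and invoke closure of $\catgic$ under summands, which is elementary from the definition of a complete $\caticc\catii$-resolution together with Eilenberg's swindle / idempotent-splitting in $\catm$.

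The main obstacle is avoiding circularity: closure under summands is used to deduce the other two properties from their ``double-dual'' versions, so it must be proved independently. I expect to dispatch it directly on the $\text{G}_C$-injective side (summands of $\text{G}_C$-injectives are $\text{G}_C$-injective, e.g.\ via Fact~\ref{fact0201} reducing to $\cat{GI}(R\ltimes C)$ where this is standard), and then everything else follows formally from the duality. The remaining routine points --- exactness of $(-)^*$ reflecting exact sequences, and $M$ being a summand of $M^{**}$ --- are already in play in this section and require no new argument.
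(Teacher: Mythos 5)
Your overall route---dualize with $(-)^*$ and quote the closure properties of $\catgic$---is exactly the approach of the paper's proof, but the way you return from the $\text{G}_C$-injective side rests on a step that Lemma~\ref{pdual2} does not support. That lemma is the equivalence ``$M\in\catgfc$ if and only if $M^*\in\catgic$''; it does not assert that the Pontryagin dual of a $\text{G}_C$-injective module is $\text{G}_C$-flat. So from $M'^*\in\catgic$ you cannot cite it to conclude $M'^{**}\in\catgfc$: applying the lemma to the module $M'^{**}$ would require knowing $M'^{***}\in\catgic$, which is not available (nothing in the paper gives closure of $\catgic$ under double Pontryagin duals, and the implication ``$N\in\catgic\Rightarrow N^*\in\catgfc$'' is never established here). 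The same unsupported passage to double duals occurs in your extension and summand arguments, and it is what forces your detour through ``$M'$ is a summand of $M'^{**}$'' and the ensuing worry about circularity.

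The repair is immediate and dissolves those complications: use the equivalence of Lemma~\ref{pdual2} in both directions. From $M'^*\in\catgic$ you conclude $M'\in\catgfc$ directly, with no double dual and no prior appeal to closure under summands, so the three properties are proved independently of one another: dualize the given (split or exact) sequence, apply the corresponding closure property of $\catgic$ (the dual of \cite[(3.8)]{white:gpdrsm}, which is also what the paper quotes rather than an Eilenberg-swindle argument from scratch), and return by the converse direction of Lemma~\ref{pdual2}. This is precisely the paper's proof (done explicitly for summands, with the other two cases noted to be similar); your proposal is the same argument burdened with an avoidable and, as written, unjustified intermediate claim.
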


\begin{proof}
The result dual to~\cite[(3.8)]{white:gpdrsm}
says that $\catgic$ is closed under cokernels of monomorphisms,
extensions and  summands.
To see that $\catgfc$ is closed under  summands,
let $M\in\catgfc$ and assume that $N$ is a direct summand of $M$.
It follows that the Pontryagin dual $N^*$ is a direct summand of $M^*$.
Lemma~\ref{pdual2} implies that $M^*$ is in $\catgic$ which is closed
under  summands.  We conclude that $N^*\in\catgic$,
and so $N\in\catgfc$.  Hence 
$\catgfc$ is closed under  summands,
and the other properties are verified similarly.
\end{proof}

The next four results put the finishing touches on Theorem~\ref{thma}.

\begin{lem} \label{lem0215}
Let $C$ be a semidualizing $R$-module.
If $X$ is a complete $\catff\catfcc$-resolution,
then $\coker(\partial^X_n)\in\catgfc$ for each $n\in\bbz$.
\end{lem}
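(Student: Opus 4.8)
The plan is to reduce the statement to the already-established fact that the Pontryagin dual functor interchanges complete $\catff\catfcc$-resolutions with complete $\caticc\catii$-resolutions, and then to invoke the analogous (dual) statement for $\text{G}_C$-injective modules, which follows from the result dual to~\cite[(3.8)]{white:gpdrsm} cited in the proof of Proposition~\ref{gfcprojres}. So first I would take a complete $\catff\catfcc$-resolution $X$ and form its Pontryagin dual $X^*$. Using Lemma~\ref{lem0202} together with Lemma~\ref{pdual}, each term $X_i$ dualizes correctly: the flat terms $X_i$ with $i\geq 0$ have injective duals $X_i^*$, and the $C$-flat terms $X_i$ with $i<0$ have $C$-injective duals $X_i^*$ by Lemma~\ref{pdual}\eqref{pdualb}. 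Since $\qmodz$ is injective over $\bbz$, the functor $(-)^*$ is exact, so $X^*$ is an exact complex; and the $-\otimes_R\caticc$-exactness of $X$ translates, via Hom-tensor adjointness and the injectivity of $\qmodz$, into $\hom_R(\caticc,-)$-exactness of $X^*$ (this is the same manipulation used in the proof of Lemma~\ref{extgfcvan}). Hence $X^*$, after re-indexing by $\shift$ to line up the cohomological convention, is a complete $\caticc\catii$-resolution.

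Next I would fix $n\in\bbz$ and set $M=\coker(\partial^X_n)$. Applying $(-)^*$ to the short exact sequence $0\to\im(\partial^X_{n+1})\to X_{n-1}\to\coker(\partial^X_n)\to 0$ (equivalently, tracking cokernels through the dualization), one identifies $M^*$ with a kernel appearing in the exact complex $X^*$; more precisely $M^*\cong\ker$ of the appropriate differential of $X^*$, which is exactly a cokernel of a differential of the reindexed complex $(\shift^? X)^*$. Since $X^*$ (suitably shifted) is a complete $\caticc\catii$-resolution, this kernel/cokernel module is $\text{G}_C$-injective, i.e. $M^*\in\catgic$. Then Lemma~\ref{pdual2} immediately gives $M\in\catgfc$, which is the desired conclusion.

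The main obstacle is purely bookkeeping: matching the homological indexing conventions of Definition~\ref{defn0301} for the two resolution types through the contravariant functor $(-)^*$, so that "flat in degrees $\geq 0$, $C$-flat in degrees $<0$" for $X$ becomes "$C$-injective in degrees $\geq 0$, injective in degrees $<0$" for the shifted dual, and so that $\coker(\partial^X_n)$ for each $n$ really does occur as some $\coker(\partial^Y_1)$ for a complete $\caticc\catii$-resolution $Y$. Here the standard trick (used throughout the paper, e.g.\ in Remark~\ref{disc0501}) that a shift $\shift^iX$ of a complete resolution is again a complete resolution of the same type handles the "for each $n\in\bbz$" uniformly: it suffices to treat one value of $n$, say via $\coker(\partial^X_1)$, and then shift. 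Aside from this indexing care, every ingredient — exactness of $(-)^*$, the duality of terms in Lemma~\ref{pdual}, the Hom-tensor adjointness converting $-\otimes_R\caticc$-exactness to $\hom_R(\caticc,-)$-exactness, and the closure/duality statement for $\catgic$ — is already available in the excerpt, so the argument is short once the conventions are pinned down.
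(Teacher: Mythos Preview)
Your duality idea is sound for the single value $n=1$, but the step that extends it to all $n$ does not work. You invoke the ``standard trick'' from Remark~\ref{disc0501} that a shift of a complete resolution is again one of the same type; however, Remark~\ref{disc0501} is about $\catpcc\catfccott$-complete $\catx$-resolutions (Definition~\ref{defn0501}), whose terms all lie in a \emph{single} subcategory $\catx$, and those are trivially shift-invariant. A complete $\catff\catfcc$-resolution (Definition~\ref{defn0301}) is a different animal: $X_i$ is flat for $i\geq 0$ and $C$-flat for $i<0$, so shifting moves the transition point away from degree $0$, and $\shift^iX$ is in general \emph{not} a complete $\catff\catfcc$-resolution (nor is a shift of $X^*$ a complete $\caticc\catii$-resolution). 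Thus Pontryagin duality only hands you $(\coker(\partial^X_1))^*\in\catgic$ for free; for $n\neq 1$ you would still need the $\catgic$-analogue of the very lemma under discussion, namely that every cokernel in a complete $\caticc\catii$-resolution is $\text{G}_C$-injective. That statement is true, but it is not a consequence of the closure properties in the dual of~\cite[(3.8)]{white:gpdrsm} alone: those give only cokernels of monomorphisms, which handles one direction of the index range, while the other direction needs a direct appeal to the definition. So the detour through $\catgic$ does not actually shorten anything.

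The paper's argument is direct and avoids duality entirely. For $n\geq 1$ it uses the exact sequences $0\to M_n\to X_{n-2}\to M_{n-1}\to 0$ together with the already-proved closure of $\catgfc$ under kernels of epimorphisms (Proposition~\ref{gfcprojres}). For $n\leq 0$ it verifies the two conditions of Fact~\ref{fact0201} for $M_n$: the tail $0\to M_n\to X_{n-2}\to X_{n-3}\to\cdots$ is an augmented $\catfcc$-coresolution that inherits $-\otimes_R\caticc$-exactness from $X$, and the Tor-vanishing $\tor^R_{\geq 1}(M_n,\caticc)=0$ comes from Lemma~\ref{lem0216}\eqref{lem0216c}. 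This is precisely the two-case split your approach would have to replicate on the $\catgic$ side.
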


\begin{proof}
Write $M_n=\coker(\partial^X_n)$, and note that $M_1\in\catgfc$ by definition.
Fact~\ref{fact0201} implies that $X_n\in\catgfc$ for 
each $n\in\bbz$.  Since $M_1$ is in $\catgfc$, an induction argument using
Proposition~\ref{gfcprojres} shows $M_n\in\catgfc$ for each $n\geq 1$.

Now assume $n\leq 0$.  
Lemma~\ref{lem0216}\eqref{lem0216c},
implies $\tor_{\geq 1}^R(M_n,\caticc)=0$.
By construction, the following sequence is exact
and $-\otimes_R\caticc$-exact
$$0\to M_n\to X_{n-2}\to X_{n-3}\cdots$$
with each $X_{n-i}\in\catgfc$,
and so $M_n\in\catgfc$ by
Fact~\ref{fact0201}. 
\end{proof}

\begin{lem}\label{lem1001}
Let $C$ be a semidualizing $R$-module.
If $M\in\catfc$, then there is an exact sequence
$0\to M\to M_1\to M_2\to 0$
with $M_1\in\catfccot$ and $M_2\in\catfc$.
\end{lem}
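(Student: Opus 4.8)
The plan is to reduce the statement to the corresponding (dual) fact about flat modules over $R$, namely that every flat module embeds into a flat cotorsion module with flat cokernel, and then transport that short exact sequence through the functor $C\otimes_R-$. First I would write $M\cong F\otimes_R C$ for some flat $R$-module $F$, using the definition of $\catfc$. By the theory of cotorsion pairs for flat modules (as in Xu~\cite{xu:fcm}, e.g.\ the existence of flat cotorsion envelopes), there is an exact sequence $0\to F\to F_1\to F_2\to 0$ in which $F_1$ is flat and cotorsion and $F_2$ is flat; concretely one may take $F_1$ to be a flat cotorsion preenvelope of $F$, whose cokernel $F_2$ is flat because the class of flat modules is closed under cokernels of monomorphisms along such maps (or, dually to Lemma~\ref{lem1001} itself, because the flat cotorsion modules cogenerate the flat modules).

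Next I would apply $C\otimes_R-$ to this sequence. Since $F$, $F_1$, $F_2$ are all flat, they all lie in $\catac$, and in particular $\tor^R_{\geq 1}(C,F_2)=0$, so the sequence
$$0\to C\otimes_R F\to C\otimes_R F_1\to C\otimes_R F_2\to 0$$
is exact. By construction $C\otimes_R F\cong M$, the module $C\otimes_R F_2$ lies in $\catfc$, and $C\otimes_R F_1$ lies in $\catfccot$ because $F_1$ is flat and cotorsion (this is exactly the definition of $\catfccot$ in Definition~\ref{defn0201}, and is also consistent with the characterization in Lemma~\ref{lem0201}). Taking $M_1=C\otimes_R F_1$ and $M_2=C\otimes_R F_2$ gives the desired exact sequence $0\to M\to M_1\to M_2\to 0$.

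The only real obstacle is pinning down the existence of the sequence $0\to F\to F_1\to F_2\to 0$ at the level of flat $R$-modules with $F_1$ flat cotorsion and $F_2$ flat; everything after that is formal, using only flatness of the $F_i$ (hence membership in $\catac$) to see that $C\otimes_R-$ preserves exactness. This existence is standard: it follows from the fact that $(\catf,\catfcot)$-type cotorsion pairs are complete, so $F$ admits a special $\catfcot$-preenvelope, i.e.\ an injection $F\hookrightarrow F_1$ with $F_1\in\catfcot$ and $\coker\in{}^{\perp}\catfcot\supseteq\catf$; since flat modules form the left-hand class here, in fact $\coker$ is flat. I would cite Xu~\cite{xu:fcm} (or Enochs--Jenda~\cite{enochs:rha}) for this and keep the write-up short.
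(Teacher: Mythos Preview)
Your proposal is correct and follows essentially the same approach as the paper: obtain a flat module attached to $M$, embed it into a flat cotorsion module with flat cokernel via Xu~\cite{xu:fcm}, and then tensor with $C$. The only cosmetic difference is that the paper works with the flat module $\hom_R(C,M)$ and then uses $M\in\catbc$ to identify $C\otimes_R\hom_R(C,M)\cong M$, whereas you choose an $F$ with $M\cong F\otimes_R C$ directly from the definition; since $F\in\catac$ forces $\hom_R(C,M)\cong F$, the two routes coincide.
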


\begin{proof}
Since $M$ is $C$-flat, we know from~\cite[Thm.\ 1]{holm:fear}
that $\hom_R(C,M)$ is flat.
By~\cite[(3.1.6)]{xu:fcm} there is a cotorsion flat module $F$ containing 
$\hom_R(C,M)$ such
that the quotient $F/\hom_R(C,M)$
is flat.
Consider the exact sequence
$$0\to \hom_R(C,M)\to F\to F/\hom_R(C,M)\to 0.$$
Since $F/\hom_R(C,M)$ is flat, an application of $C\otimes_R-$
yields an exact sequence
$$0\to C\otimes_R\hom_R(C,M)\to C\otimes_RF\to C\otimes_R(F/\hom_R(C,M))\to 0.$$
Because $M$ is $C$-flat, it is in $\catbc$ and so
$C\otimes_R\hom_R(C,M)\cong M$. With
$M_1=C\otimes_RF$ and $M_2=C\otimes_R(F/\hom_R(C,M))$
this yeilds the desired sequence.
\end{proof}

\begin{lem}\label{fccotpreenv}
Let $C$ be a semidualizing $R$-module.
Each module $M\in\catgfc$ admits an injective
$\catfccott$-preenvelope $\alpha\colon M\to Y$ such that $\coker(\alpha)\in\catgfc$.
\end{lem}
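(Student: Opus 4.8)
The plan is to mimic the $\text{G}_C$-projective argument from~\cite[(3.8)]{white:gpdrsm}, but in the flat–cotorsion setting, producing the preenvelope one step at a time. Fix $M\in\catgfc$ and let $Z$ be a complete $\catff\catfcc$-resolution of $M$, so that $M\cong\coker(\partial^Z_1)$ and the modules $K_n=\coker(\partial^Z_n)$ all lie in $\catgfc$ by Lemma~\ref{lem0215}. The key point is that the map $M=K_1\to Z_{-1}$ coming from the complex $Z$ is a monomorphism with cokernel $K_{-1}\in\catgfc$, and $Z_{-1}\in\catfc$. So it suffices to improve $Z_{-1}$ to a module in $\catfccot$ without leaving $\catgfc$. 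To do this I would apply Lemma~\ref{lem1001} to $Z_{-1}$, obtaining an exact sequence $0\to Z_{-1}\to W\to W'\to 0$ with $W\in\catfccot$ and $W'\in\catfc$. Splicing, we get an exact sequence $0\to M\to W\to Q\to 0$ where $Q$ sits in an extension $0\to K_{-1}\to Q\to W'\to 0$; since $\catgfc$ is closed under extensions (Proposition~\ref{gfcprojres}) and both $K_{-1}$ and $W'$ are $\text{G}_C$-flat (the latter by Fact~\ref{fact0201}), we conclude $Q\in\catgfc$. Setting $Y=W$ and letting $\alpha\colon M\to Y$ be the composite monomorphism gives the desired exact sequence $0\to M\to Y\to Q\to 0$ with $Y\in\catfccot$ and $\coker(\alpha)=Q\in\catgfc$.

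It remains to check that $\alpha$ is an injective $\catfccott$-preenvelope, i.e.\ that $\Hom_R(\alpha,N)$ is surjective for every $N\in\catfccot$. This is where Lemma~\ref{extgfcvan} does the work: applying $\Hom_R(-,N)$ to $0\to M\to Y\to Q\to 0$ yields an exact sequence whose relevant piece is
\[
\Hom_R(Y,N)\to\Hom_R(M,N)\to\ext^1_R(Q,N),
\]
and the last term vanishes because $Q\in\catgfc$, $N\in\catfccot$, and $\catgfc\perp\finrescatfccot\supseteq\catfccot$ by Lemma~\ref{extgfcvan}. Hence $\Hom_R(\alpha,N)$ is surjective, so $\alpha$ is an $\catfccott$-preenvelope; it is injective by construction.

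The only genuinely delicate point is verifying that the map $M=\coker(\partial^Z_1)\to Z_{-1}$ induced by $Z$ is indeed injective with $\text{G}_C$-flat cokernel. This is really the content of the fact that complete $\catff\catfcc$-resolutions break into short exact sequences $0\to K_{n+1}\to Z_n\to K_n\to 0$ with all terms as required, and in the range $n\le 0$ the modules $K_n$ are $\text{G}_C$-flat by the second half of the proof of Lemma~\ref{lem0215}. So the main obstacle is bookkeeping with the truncations of $Z$ rather than any new idea; once that is in place, Lemma~\ref{lem1001}, Proposition~\ref{gfcprojres}, and Lemma~\ref{extgfcvan} assemble the statement immediately.
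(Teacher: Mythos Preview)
Your proof is correct and follows essentially the same route as the paper: embed $M$ into the first negative-degree term of its complete $\catff\catfcc$-resolution, push that $C$-flat module into a $C$-flat $C$-cotorsion module via Lemma~\ref{lem1001}, and then use closure of $\catgfc$ under extensions together with Lemma~\ref{extgfcvan} to verify the preenvelope property. One minor indexing slip: with $K_n=\coker(\partial^Z_n)$ the short exact sequences read $0\to K_{n+1}\to Z_{n-1}\to K_n\to 0$, so the cokernel of $M=K_1\hookrightarrow Z_{-1}$ is $K_0$, not $K_{-1}$; this does not affect the argument since every $K_n$ lies in $\catgfc$ by Lemma~\ref{lem0215}.
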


\begin{proof}
Let $M\in\catgfc$ with complete $\catff\catfcc$-resolution  $X$.
By definition, this says that $M$ is a submodule of the
$C$-flat $R$-module $X_{-1}$, and Lemma~\ref{lem0215}
implies that $X_{-1}/M\in\catgfc$.
Since $X_{-1}$ is $C$-flat, 
Lemma~\ref{lem1001} yields an exact sequence
$$0\to X_{-1}\to Z\to Z/X_{-1}\to 0$$
with $Z\in\catfccot$ and $Z/X_{-1}\in\catfc$.
It follows that $Z/X_{-1}$ is in $\catgfc$.
Since $X_{-1}/M$ is also  in $\catgfc$, and $\catgfc$ is closed under extensions
by Proposition~\ref{gfcprojres},
the following exact sequence shows that
$Z/M$ is also in $\catgfc$ 
$$0\to X_{-1}/M\to Z/M\to Z/X_{-1}\to 0.$$
In particular, Lemma~\ref{extgfcvan} implies
$Z/M\perp\catfccot$,
and it follows that the next sequence is
$\hom_R(-,\catfccott)$-exact by Lemma~\ref{perp03}\eqref{perp03item1}.
$$0\to M\to C\otimes_R F\to Z/M\to 0$$
The conditions $Z\in \catfccot$ 
and $Z/M\in\catgfc$
then
implies that the inclusion $M\to Z$ is 
an $\catfccott$-preenvelope whose cokernel is in $\catgfc$.
\end{proof}

\begin{prop}  \label{lem0303}
Let $C$ be a semidualizing $R$-module.
The category $\catfccot$ is an injective cogenerator for the category
$\catgfc$. In particular, every module in $\catgfc$ admits 
a $\catfccott$-proper $\catfccott$-coresolution, and so $\catgfc\subseteq\propcorescatfccot$. 
\end{prop}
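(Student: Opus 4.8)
The plan is to verify the two defining conditions for $\catfccot$ to be an injective cogenerator for $\catgfc$: namely, that $\catgfc\perp\catfccot$ and that every module in $\catgfc$ fits in a short exact sequence $0\to M\to W\to M'\to 0$ with $W\in\catfccot$ and $M'\in\catgfc$. The first condition is exactly Lemma~\ref{extgfcvan}. The second condition is exactly the content of Lemma~\ref{fccotpreenv}: given $M\in\catgfc$, that lemma produces an injective $\catfccott$-preenvelope $\alpha\colon M\to Y$ with $Y\in\catfccot$ and $\coker(\alpha)\in\catgfc$, which is precisely the required cogenerating sequence. So the main body of the proof is a two-line citation of these two lemmas.

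For the "in particular" clause, the point is that once we know $\catfccot$ is an injective cogenerator for $\catgfc$, we can iterate the cogenerating sequence to build a coresolution of any $M\in\catgfc$ by modules of $\catfccot$. First I would set $M^0=M$ and inductively apply the cogenerator property to $M^i\in\catgfc$ to get $0\to M^i\to W^i\to M^{i+1}\to 0$ with $W^i\in\catfccot$ and $M^{i+1}\in\catgfc$; splicing these yields an augmented $\catfccott$-coresolution of $M$. To see it is $\catfccott$-proper, I would invoke Lemma~\ref{lem0216}\eqref{lem0216a}: the augmented coresolution is an exact complex whose terms lie in $\catfccot$, and since $\catgfc\perp\catfccot$ (hence each kernel/cokernel $M^i$ satisfies $M^i\perp\catfccot$), the complex is $\hom_R(-,\catfccott)$-exact. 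Actually, since each individual short exact sequence $0\to M^i\to W^i\to M^{i+1}\to 0$ has $M^{i+1}\perp\catfccot$, Lemma~\ref{perp03}\eqref{perp03item1} shows directly that each is $\hom_R(-,\catfccott)$-exact, and these splice together to give the $\catfccott$-properness of the full augmented coresolution. This shows $M\in\propcorescatfccot$, hence $\catgfc\subseteq\propcorescatfccot$.

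I do not anticipate a serious obstacle here, since essentially all the work has been front-loaded into Lemmas~\ref{extgfcvan} and~\ref{fccotpreenv}. The only mild care needed is the bookkeeping in the iteration for the "in particular" statement — making sure the spliced complex is genuinely a coresolution (terms indexed in nonpositive degrees, exact, augmented map an isomorphism on $\HH_0$) and that properness follows termwise from the short exact sequences rather than requiring a separate argument. If anything, the subtlety is purely organizational: one must remember that each stage $M^{i+1}$ is again in $\catgfc$ so the induction does not break down, and that being in $\catgfc$ is what supplies $M^{i+1}\perp\catfccot$ via Lemma~\ref{extgfcvan}, which in turn is what makes the splice $\hom_R(-,\catfccott)$-exact.

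\begin{proof}
By Lemma~\ref{extgfcvan} we have $\catgfc\perp\catfccot$. By Lemma~\ref{fccotpreenv}, each $M\in\catgfc$ admits an exact sequence $0\to M\to Y\to M'\to 0$ with $Y\in\catfccot$ and $M'\in\catgfc$; thus $\catfccot$ is a cogenerator for $\catgfc$. Combining these two facts shows that $\catfccot$ is an injective cogenerator for $\catgfc$.

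For the final statement, fix $M\in\catgfc$ and set $M^0=M$. Inductively, given $M^i\in\catgfc$, Lemma~\ref{fccotpreenv} yields an exact sequence
$$0\to M^i\xra{\alpha^i} W^i\to M^{i+1}\to 0$$
with $W^i\in\catfccot$ and $M^{i+1}\in\catgfc$. Splicing these sequences produces an augmented $\catfccott$-coresolution $^+Y$ of $M$ with $Y_{-i}=W^i$ for $i\geq 0$. For each $i$, Lemma~\ref{extgfcvan} gives $M^{i+1}\perp\catfccot$, so Lemma~\ref{perp03}\eqref{perp03item1} shows that the sequence displayed above is $\hom_R(-,\catfccott)$-exact. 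Since each short exact sequence in the splice is $\hom_R(-,\catfccott)$-exact, so is $^+Y$. Therefore $Y$ is a $\catfccott$-proper $\catfccott$-coresolution of $M$, that is, $M\in\propcorescatfccot$. As $M\in\catgfc$ was arbitrary, $\catgfc\subseteq\propcorescatfccot$.
\end{proof}
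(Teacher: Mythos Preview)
Your proof is correct and follows the same approach as the paper: the paper's own proof simply cites Lemmas~\ref{extgfcvan} and~\ref{fccotpreenv} for the injective cogenerator claim and then says the remaining conclusions ``follow immediately.'' You supply the routine details behind that last phrase (the inductive splice and the $\hom_R(-,\catfccott)$-exactness check), but there is no substantive difference in strategy.
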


\begin{proof}
Lemmas~\ref{extgfcvan} and~\ref{fccotpreenv} 
imply that $\catfccot$ is an injective cogenerator for 
$\catgfc$.
The remaining conclusions follow immediately.
\end{proof}

\begin{lem}  \label{lem0304}
If $C$ is a semidualizing $R$-module,
then there is an equality
$\catfccot=\catgfc\cap\finrescatfccot$.
\end{lem}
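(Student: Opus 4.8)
The plan is to prove the two inclusions separately, with essentially all of the content sitting in the inclusion $\catgfc\cap\finrescatfccot\subseteq\catfccot$.

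First I would dispatch the easy inclusion $\catfccot\subseteq\catgfc\cap\finrescatfccot$: every $C$-flat $C$-cotorsion module is in particular $C$-flat, hence $\text{G}_C$-flat by Fact~\ref{fact0201}, so $\catfccot\subseteq\catgfc$; and $\catfccot\subseteq\finrescatfccot$ simply because $\finrescatfccot$ contains $\catfccot$ by construction (Definition~\ref{notation03}).

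For the reverse inclusion, fix $M\in\catgfc\cap\finrescatfccot$ and argue by induction on $n=\fccotpd_R(M)$. The case $n=0$ is immediate. If $n\geq1$, choose a short exact sequence $0\to K\to W\to M\to 0$ with $W\in\catfccot$ and $\fccotpd_R(K)\leq n-1$; this is the first syzygy obtained from an $\catfccott$-resolution of $M$ of length $n$. Since $W\in\catfccot\subseteq\catfc\subseteq\catgfc$ by Fact~\ref{fact0201} and $M\in\catgfc$, closure of $\catgfc$ under kernels of epimorphisms (Proposition~\ref{gfcprojres}) gives $K\in\catgfc$; as $K$ also has finite $\catfccott$-projective dimension, the induction hypothesis yields $K\in\catfccot$.

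The last step is the only place where an idea is needed, and I expect it to be the main obstacle: one cannot pass from ``$K,W\in\catfccot$'' to ``$M\in\catfccot$'' by invoking closure of $\catfccot$ under cokernels of monomorphisms, because no such closure property is available (Lemma~\ref{lem0207} only gives products, extensions, and summands). Instead I would use the orthogonality $\catgfc\perp\finrescatfccot$ of Lemma~\ref{extgfcvan}: since $M\in\catgfc$ and $K\in\catfccot\subseteq\finrescatfccot$, we have $\ext^1_R(M,K)=0$, so the sequence $0\to K\to W\to M\to 0$ splits and $M$ is a direct summand of $W$. As $\catfccot$ is closed under summands by Lemma~\ref{lem0207}, we conclude $M\in\catfccot$, completing the induction and the proof.
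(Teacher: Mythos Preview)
Your proof is correct and rests on exactly the same key idea as the paper's: use the orthogonality $\catgfc\perp\finrescatfccot$ from Lemma~\ref{extgfcvan} to split the short exact sequence and then invoke closure of $\catfccot$ under summands (Lemma~\ref{lem0207}). The only difference is that your induction is unnecessary: you work hard to show $K\in\catfccot$ via the induction hypothesis, but Lemma~\ref{extgfcvan} already gives $\ext^1_R(M,K)=0$ from the weaker information $K\in\finrescatfccot$, which is immediate from the construction of the sequence. The paper's proof simply truncates a bounded $\catfccott$-resolution, notes $\fccotpd_R(K)<\infty$, applies Lemma~\ref{extgfcvan} directly, and splits---no induction, no appeal to Proposition~\ref{gfcprojres}.
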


\begin{proof}
The containment $\catfccot\subseteq\catgfc\cap\finrescatfccot$ is
straightforward; see Definition~\ref{notation03}
and Fact~\ref{fact0201}. For the reverse containment, let
$M\in\catgfc\cap\finrescatfccot$. Truncate a bounded
$\catfccott$-resolution to obtain an exact sequence
$$0\to K \to F\otimes_R C \to M \to 0$$ 
with $F\in\catfcot$ 
and such that $\fccotpd_R(K)<\infty$.
We have $\ext^1_R(M,K)=0$ by Lemma~\ref{extgfcvan}, so this
sequence splits.  
Hence $M$ is a summand of $F\otimes_R C\in\catfccot$.
Lemma~\ref{lem0207} implies that $\catfccot$ is closed under summands,
so $M\in \catfccot$.
\end{proof}

\begin{para}\label{thm0301}
\textit{Proof of Theorem~\ref{thma}.}
Part~\eqref{thma1} is in Proposition~\ref{gfcprojres}.
Since $\catfccot\subseteq\catgfc$ by Fact~\ref{fact0201},
we have $\finrescatfccot\subseteq\finrescatgfc$.
With this, part~\eqref{thma2} follows from Proposition~\ref{lem0210}.
Proposition~\ref{lem0303} and
Lemma~\ref{lem0304} justify part~\eqref{thma3}.
\qed
\end{para}

Here is the list of immediate consequences of
Theorem~\ref{thma} and~\cite[(1.12.10)]{hashimoto:abaem}.
For part~\eqref{cor0301a}, recall that 
$\operatorname{add}(\catx)$ is the subcategory of
all $R$-modules isomorphic to a direct summand of a finite direct sum of 
modules in $\catx$.

\begin{cor}  \label{cor0301}
Let $C$ be a semidualizing $R$-module and let $M\in\finrescatgfc$.
\begin{enumerate}[\quad\rm(a)]
\item \label{cor0301a}
If $\catx$ is an injective cogenerator for
$\catgfc$, then $\operatorname{add}(\catx)=\catfccot$.
\item \label{cor0301b}
There
exists an exact sequence $0\to Y \to X \to M \to 0$ with $X\in
\catgfc$ and $Y\in\finrescatfccot$.
\item \label{cor0301c}
There
exists an exact sequence $0\to M \to Y \to X \to 0$ with $X\in
\catgfc$ and $Y\in\finrescatfccot$.
\item \label{cor0301d}
The
following conditions are equivalent:
\begin{enumerate}[\quad\rm(i)]
\item $M\in \catgfc$;
\item $\ext_R^{\geq 1}(M,\finrescatfccott)=0$;
\item $\ext_R^1(M,\finrescatfccott)=0$;
\item $\ext_R^{\geq 1}(M,\catfccott)=0$.
\end{enumerate}
Thus, the surjection $X\to M$ from~\eqref{cor0301b} is a $\catgfcc$-precover of $M$.
\item \label{cor030e}
The
following conditions are equivalent:
\begin{enumerate}[\quad\rm(i)]
\item $M\in\finrescatfccot$;
\item $\ext_R^{\geq 1}(\catgfcc,M)=0$;
\item $\ext_R^1(\catgfcc,M)=0$;
\item $\sup\{i\geq 0\mid \ext^i_R(\catgfcc,M)\neq 0\}<\infty$ 
and $\ext^{\geq 1}_R(\catfccott,M)=0$.
\end{enumerate}
Thus, the injection $M\to Y$ from~\eqref{cor0301c} is a $\finrescatfccott$-preenvelope of $M$.
\item \label{cor0301f}
There are equalities
\begin{align*}
\catpd{\catgfcc}_R(M)
&=\sup\{i\geq 0\mid \ext^i_R(M,\finrescatfccott)\neq 0\} \\
&= \sup\{i\geq 0\mid \ext^i_R(M,\catfccott)\neq 0\}
\end{align*}
\item \label{cor0301g}
There is an inequality
$\catpd{\catgfcc}_R(M)\leq\catpd{\catfccott}_R(M)$
with equality when $\catpd{\catfccott}_R(M)<\infty$.
\item The category $\finrescatgfc$ 
is closed under extensions, kernels of epimorphisms and 
cokernels of monomorphisms.
\qed
\end{enumerate}
\end{cor}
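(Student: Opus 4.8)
The plan is to derive every clause from Theorem~\ref{thma} together with the omnibus result \cite[(1.12.10)]{hashimoto:abaem} and the Auslander--Buchweitz machinery of \cite{auslander:htmcma}. The first step is to record that Theorem~\ref{thma}, in combination with Lemma~\ref{extgfcvan} and Proposition~\ref{lem0303}, verifies exactly the axioms required of the triple $(\catgfc,\finrescatfccot,\catfccot)$: part~\eqref{thma1} gives that $\catgfc$ is closed under kernels of epimorphisms, extensions, and summands; part~\eqref{thma2} gives the corresponding closure properties of $\finrescatfccot$ together with $\finrescatfccot\subseteq\finrescatgfc$; part~\eqref{thma3} gives $\catfccot=\catgfc\cap\finrescatfccot$ and that $\catfccot$ is an injective cogenerator for $\catgfc$; and Lemma~\ref{extgfcvan} supplies the orthogonality $\catgfc\perp\finrescatfccot$. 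By Fact~\ref{fact0201} we also have $\catfccot\subseteq\catgfc$, so the cogenerator lies inside the Gorenstein category, which is what places us in the ``resolving'' situation rather than the merely orthogonal one.

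With this in hand, parts~\eqref{cor0301b} and~\eqref{cor0301c} are the existence of the Auslander--Buchweitz approximation and coapproximation sequences, read off from \cite[(1.12.10)]{hashimoto:abaem}. Parts~\eqref{cor0301d}, \eqref{cor030e}, \eqref{cor0301f}, and~\eqref{cor0301g} are the standard $\ext$-orthogonality characterizations of membership in $\catgfc$ and in $\finrescatfccot$, together with the two induced relative-dimension formulas; each transcribes a clause of \cite[(1.12.10)]{hashimoto:abaem} (respectively the pertinent statement of \cite{auslander:htmcma}), where the passage between the ``$\finrescatfccott$'' and the ``$\catfccott$'' versions of the vanishing conditions uses $\catfccot\subseteq\catgfc$. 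The ``Thus'' clauses in~\eqref{cor0301d} and~\eqref{cor030e} --- that the approximation map $X\to M$ is a $\catgfcc$-precover and the coapproximation map $M\to Y$ is a $\finrescatfccott$-preenvelope --- follow by feeding the orthogonality of Lemma~\ref{extgfcvan} into the long exact $\ext$-sequences of the sequences in~\eqref{cor0301b} and~\eqref{cor0301c}. For part~\eqref{cor0301a}: an arbitrary injective cogenerator $\catx$ for $\catgfc$ satisfies $\catx\subseteq\catgfc$ and $\catgfc\perp\catx$, so the uniqueness of the cogenerating subcategory (the ``core'' of the context, as in \cite{auslander:htmcma}) forces $\operatorname{add}(\catx)=\catgfc\cap\finrescatfccot$, and this equals $\catfccot$ by Theorem~\ref{thma}\eqref{thma3} since $\catfccot$ is closed under summands by Lemma~\ref{lem0207}. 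Finally, the closure properties of $\finrescatgfc$ in the last item are the resolving-subcategory conclusion of \cite{auslander:htmcma} applied to the category of modules of finite $\catgfc$-projective dimension.

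I expect no genuine mathematical obstacle here --- all the substance is in Theorem~\ref{thma} --- so the task is essentially bookkeeping: aligning the hypotheses verified above with the exact axioms labelled ``weak AB-context'' in \cite{hashimoto:abaem}, and then checking that each numbered conclusion cited from \cite{hashimoto:abaem} and \cite{auslander:htmcma} specializes verbatim to the corresponding clause. The one point worth stating carefully in the write-up is that the dimension equality in~\eqref{cor0301f} and the closure statement in the final item live in the resolving part of the theory, and so genuinely use $\catfccot\subseteq\catgfc$ (Fact~\ref{fact0201}), not merely $\catgfc\perp\catfccot$.
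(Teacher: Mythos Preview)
Your proposal is correct and matches the paper's approach exactly: the paper states the corollary with a trailing \qed\ and only the preamble ``Here is the list of immediate consequences of Theorem~\ref{thma} and~\cite[(1.12.10)]{hashimoto:abaem},'' so your elaboration of which axioms are verified by which pieces of Theorem~\ref{thma} (and the auxiliary lemmas) is in fact more detailed than what the paper supplies.
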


For the next result recall that the triple
$(\catgfc,\finrescatfccot,\catfccot)$ is an AB-context
if it is a weak AB-context and such that
$\finrescatgfc=\catm$.

\begin{prop} \label{prop0301}
Assume that $\dim(R)$ is finite,
and let $C$ be a semidualizing $R$-module.
The triple
$(\catgfc,\finrescatfccot,\catfccot)$ is an AB-context
if and only if $C$ is dualizing for $R$.
\end{prop}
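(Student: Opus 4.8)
The plan is to reduce the equivalence to a known characterization of dualizing modules via the finiteness of a relevant homological dimension. Recall that the triple $(\catgfc,\finrescatfccot,\catfccot)$ is an AB-context precisely when it is a weak AB-context (which holds by Theorem~\ref{thma}) and $\finrescatgfc=\catm$, i.e., every $R$-module has finite $\text{G}_C$-flat dimension. So the task is to show: every $R$-module has finite $\text{G}_C$-flat dimension if and only if $C$ is dualizing. For the ``if'' direction, suppose $C$ is dualizing. Using Fact~\ref{fact0201}, we have $\gfcpd_R(M)=\gfd_{R\ltimes C}(M)$, and $R\ltimes C$ is a noetherian ring of finite Krull dimension; since $C$ is dualizing for $R$, the trivial extension $R\ltimes C$ is a Gorenstein ring (this is a standard fact: $R\ltimes D$ is Gorenstein when $D$ is dualizing). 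Over a Gorenstein ring of finite Krull dimension, every module has finite Gorenstein flat dimension, so $\gfcpd_R(M)<\infty$ for every $R$-module $M$, giving $\finrescatgfc=\catm$.

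For the ``only if'' direction, assume $\finrescatgfc=\catm$; I want to conclude $C$ is dualizing, i.e., $\id_R(C)<\infty$. The key is to test the hypothesis on a single well-chosen module. I would apply the hypothesis to an injective $R$-module $I$, or more precisely use the Pontryagin duality machinery developed in Section~\ref{sec04} and Lemma~\ref{pdual2}: $M\in\catgfc$ if and only if $M^*\in\catgic$, and more generally $\gicpd_R(M^*)=\gfcpd_R(M)$ should follow by dualizing resolutions (cf.\ Lemmas~\ref{lem0701} and~\ref{pdual2} together with a dimension-shifting argument). Thus $\finrescatgfc=\catm$ forces every Pontryagin dual $M^*$ to have finite $\text{G}_C$-injective dimension. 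Applying this with $M=R$: the module $R^* = \Hom_\bbz(R,\qmodz)$ is injective as an $R$-module (since $R$ is flat, by Lemma~\ref{lem0202}\eqref{lem0202b}), hence $\text{G}_C$-injective dimension finite is automatic and gives nothing. Instead I would run the argument in the other direction: the hypothesis $\finrescatgfc=\catm$ means in particular $R\ltimes C$ has the property that every module has finite Gorenstein flat dimension; by a theorem of the type ``global Gorenstein flat/projective dimension finite over a noetherian ring of finite Krull dimension forces the ring to be Gorenstein'' (this is where I expect to cite work on Gorenstein homological dimensions, e.g.\ results ensuring $\text{global Gfd}(S)<\infty \Leftrightarrow S$ Gorenstein for $S$ noetherian with $\dim S<\infty$), we conclude $R\ltimes C$ is Gorenstein. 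Finally, $R\ltimes C$ Gorenstein together with $R$ noetherian local (or by localizing) forces $C$ to be dualizing for $R$: this is the converse half of the standard correspondence between dualizing modules over $R$ and the Gorenstein property of $R\ltimes C$.

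The main obstacle I anticipate is the passage ``every $R$-module has finite $\text{G}_C$-flat dimension $\Rightarrow$ $R\ltimes C$ is Gorenstein.'' One must be careful that finiteness of $\gfd$ for all modules over a noetherian ring of finite Krull dimension genuinely characterizes the Gorenstein property; the cleanest route is probably to transfer to Gorenstein \emph{injective} dimension via Pontryagin duals (Lemma~\ref{pdual2} applied over $R\ltimes C$) and invoke the known result that finite Gorenstein injective dimension for all modules, over such a ring, implies Gorenstein. The reduction $\gfcpd_R(-)=\gfd_{R\ltimes C}(-)$ from Fact~\ref{fact0201} does all the heavy lifting in converting the statement about $C$-relative homological algebra over $R$ into a statement about absolute Gorenstein homological algebra over $R\ltimes C$, and the two halves of the $R\ltimes C$-is-Gorenstein $\Leftrightarrow$ $C$-is-dualizing equivalence close the loop. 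I would keep the write-up short, citing the Gorenstein-homological-dimension characterization and the trivial-extension/dualizing-module correspondence rather than reproving them.
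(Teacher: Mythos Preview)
Your ``if'' direction matches the paper exactly: $C$ dualizing $\Rightarrow$ $R\ltimes C$ Gorenstein of finite Krull dimension $\Rightarrow$ every module has finite Gorenstein flat dimension over $R\ltimes C$, and the identification $\gfcpd_R(M)=\gfd_{R\ltimes C}(M)$ from Fact~\ref{fact0201} finishes.

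For the ``only if'' direction your plan has a genuine gap. You write that the hypothesis $\finrescatgfc=\catm$ ``means in particular $R\ltimes C$ has the property that every module has finite Gorenstein flat dimension,'' and then want to invoke a global characterization of Gorensteinness. But $\catm$ here is $\catm(R)$, and the identity $\gfcpd_R(M)=\gfd_{R\ltimes C}(M)$ only tells you that every $R$-module, viewed over $R\ltimes C$ via the surjection $R\ltimes C\to R$, has finite $\gfd_{R\ltimes C}$. Not every $R\ltimes C$-module arises this way (an $R\ltimes C$-module carries a nontrivial $C$-action in general), so you have not established that the global Gorenstein flat dimension of $R\ltimes C$ is finite, and the global characterization you cite does not apply.

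The paper's fix, which closes the gap with essentially no extra work, is to observe that the \emph{residue fields} of $R\ltimes C$ do come from $R$: every maximal ideal of $R\ltimes C$ has the form $\m\ltimes C$ with $\m\subset R$ maximal, and $(R\ltimes C)/(\m\ltimes C)\cong R/\m$. So your hypothesis gives $\gfd_{R\ltimes C}(R/\m)<\infty$ for each $\m$, and after localizing one gets that the residue field of each localization $(R\ltimes C)_{\m\ltimes C}\cong R_\m\ltimes C_\m$ has finite Gorenstein flat dimension. That is the right test object: it forces $R_\m\ltimes C_\m$ to be Gorenstein (via the cited results of Christensen), hence $C_\m$ dualizing for $R_\m$ by Reiten, and then $C$ dualizing globally since $\dim(R)<\infty$. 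Rewriting your ``only if'' paragraph to target residue fields rather than all $R\ltimes C$-modules would make the argument go through.
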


\begin{proof}
Assume first that 
$(\catgfc,\finrescatfccot,\catfccot)$ is an AB-context.
Recall that  every maximal ideal of
the trivial extension
$R\ltimes C$ is of the form $\m\ltimes C$ for some maximal ideal $\m\subset R$,
and there is an isomorphism
$(R\ltimes C)/(\m\ltimes C)\cong R/\m$.
With Fact~\ref{fact0201}, this yields the equality in 
the next sequence
\begin{align*}
\gfd_{(R\ltimes C)_{\m\ltimes C}}((R\ltimes C)_{\m\ltimes C}/(\m\ltimes C)_{\m\ltimes C})
&\leq\gfd_{R\ltimes C}((R\ltimes C)/(\m\ltimes C))\\
&=\gfcpd_R(R/\m) <\infty.
\end{align*}
The first inequality follows from~\cite[(5.1.3)]{christensen:gd},
and the finiteness is by assumption.
Using~\cite[(1.2.7),(1.4.9),(5.1.11)]{christensen:gd}
we deduce that the following ring is Gorenstein
$$
(R\ltimes C)_{\m\ltimes C}
\cong
R_{\m}\ltimes C_{\m}
$$
and so~\cite[(7)]{reiten:ctsgm}
implies that $C_{\m}$ is dualizing for $R_{\m}$.
(This also follows from~\cite[(8.1)]{christensen:scatac}
and~\cite[(3.1)]{holm:smarghd}.)
Since this is true for each maximal ideal of $R$ and $\dim(R)<\infty$, we conclude
that $C$ is dualizing for $R$ by~\cite[(5.8.2)]{hartshorne:rad}.

Conversely, assume that $C$ is dualizing for $R$.
Using Theorem~\ref{thma}, 
it suffices to show that each $R$-module $M$ has
$\gfcpd_R(M)<\infty$.
Since $C$ is dualizing, the trivial extension $R\ltimes C$
is Gorenstein by~\cite[(7)]{reiten:ctsgm}.
Also, we have $\dim(R\ltimes C)=\dim(R)<\infty$
as $\spec(R\ltimes C)$ is in bijection with $\spec(R)$.
Thus, in the next sequence
$$\gfcpd_R(M)=\gfd_{R\ltimes C}(M)<\infty$$
the finiteness 
is from~\cite[(12.3.1)]{enochs:rha}
and the equality is from Fact~\ref{fact0201}.
\end{proof}

To end this section, we prove a compliment to~\cite[(4.6)]{white:gpdrsm}
which establishes the existence of certain approximations.  
For this, we need the following preliminary
result which compares to Lemma~\ref{lem0304}.

\begin{lem} \label{lem1002}
If $C$ is a semidualizing $R$-module,
then there is an equality
$\catfc=\catgfc\cap\finrescatfc$.
\end{lem}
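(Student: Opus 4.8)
I want to show $\catfc = \catgfc \cap \finrescatfc$. The inclusion $\catfc \subseteq \catgfc \cap \finrescatfc$ is immediate: every $C$-flat module is $\text{G}_C$-flat by Fact~\ref{fact0201}, and it trivially has finite $\catfc$-projective dimension (namely $0$). So the work is in the reverse containment. The natural strategy is to mimic the proof of Lemma~\ref{lem0304}: take $M \in \catgfc \cap \finrescatfc$, build a short exact sequence presenting $M$ as a quotient of a $C$-flat module by a module of finite $\catfc$-projective dimension, and then split it using an Ext-vanishing statement.

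First I would fix $M \in \catgfc \cap \finrescatfc$ and, by truncating a bounded $\catfc$-resolution of $M$, produce an exact sequence
$$0 \to K \to F\otimes_R C \to M \to 0$$
with $F$ flat (so $F\otimes_R C \in \catfc$) and $\fcpd_R(K) < \infty$. The key point is then to verify $\ext^1_R(M,K) = 0$, which forces this sequence to split, exhibiting $M$ as a summand of $F\otimes_R C \in \catfc$; since $\catfc$ is closed under summands by~\cite[Prop.\ 5.2(a)]{holm:fear}, this gives $M \in \catfc$.

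The main obstacle — the only real content — is establishing $\ext^{\geq 1}_R(\catgfc, \finrescatfc) = 0$, or at least enough of it to handle $\ext^1_R(M,K)$. This is the analogue, for $\finrescatfc$ in place of $\finrescatfccot$, of Lemma~\ref{extgfcvan}. I would try to reduce it to Lemma~\ref{extgfcvan} plus Lemma~\ref{gencat01}: it suffices to show $\catgfc \perp \catfc$. For $M \in \catgfc$ and $N = F'\otimes_R C \in \catfc$, I would pass to Pontryagin duals. By Lemma~\ref{pdual}\eqref{pdualb}, $N^*$ is $C$-injective, hence lies in $\catic \subseteq \fincorescati$ after noting $C$-injective modules have finite $\caticc$-injective dimension; more directly, $N^*$ being $C$-injective means $N^* \perp$ nothing is needed — rather, I want $\ext^{\geq 1}_R(M, N) = 0$, and via the adjunction isomorphism $\ext^i_R(M, N^{**}) \cong (\tor^R_i(M, N^*))^*$ used in the proof of Lemma~\ref{extgfcvan}, this reduces to showing $\tor^R_{\geq 1}(M, N^*) = 0$ and that $N$ is a summand of $N^{**}$. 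The Tor-vanishing holds because $M \in \catgfc$ satisfies $\tor^R_{\geq 1}(M, \caticc) = 0$ by Fact~\ref{fact0201}, and $N^*$ is $C$-injective; the summand statement is exactly as in the proof of Lemma~\ref{extgfcvan}, writing $N \cong C\otimes_R F'$ with $F'$ flat and using that $F'$ is a summand of $F'^{**}$ (this last fact for flat modules is~\cite[(3.22)]{holm:ghd} or elementary). So in fact Lemma~\ref{extgfcvan}'s proof, with ``flat cotorsion'' weakened to ``flat'', already delivers $\catgfc \perp \catfc$; I would either cite that proof or simply invoke $\catfccot$-vanishing after the splitting — but since $K$ need not be $C$-flat $C$-cotorsion, I genuinely need the $\catfc$-version. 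Granting $\catgfc \perp \catfc$, Lemma~\ref{gencat01} upgrades this to $\catgfc \perp \finrescatfc$, which kills $\ext^1_R(M,K)$ and completes the argument.
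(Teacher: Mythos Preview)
Your argument has a genuine gap: the orthogonality $\catgfc \perp \catfc$ that you need is \emph{false} in general, so the splitting of $0 \to K \to F\otimes_R C \to M \to 0$ cannot be obtained this way. For a concrete failure take $R = C = \bbz$, so that $\catfc = \catf$ and $\catgfc = \catgf \supseteq \catf$; then $\bbq, \bbz \in \catf \subseteq \catgfc$ but $\ext^1_{\bbz}(\bbq,\bbz) \neq 0$. The step that breaks in your Pontryagin-dual argument is the assertion that a flat module $F'$ is a summand of $(F')^{**}$. Holm's argument in \cite[(3.22)]{holm:ghd} establishes this \emph{when $F'$ is cotorsion}: the pure-exact sequence $0 \to F' \to (F')^{**} \to (F')^{**}/F' \to 0$ has flat cokernel, and cotorsion of $F'$ is exactly what makes $\ext^1((F')^{**}/F', F')$ vanish. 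Without cotorsion it fails; for instance $\bbz$ is not a summand of $\bbz^{**} \cong \widehat{\bbz}$, since $\widehat{\bbz}$ is cotorsion and $\bbz$ is not.

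The paper sidesteps this by never trying to split against a merely $C$-flat kernel. It inducts on $\fcpd_R(M)$. In the base case $\fcpd_R(M)\leq 1$ one has $0 \to X_1 \to X_0 \to M \to 0$ with $X_0, X_1 \in \catfc$; Lemma~\ref{lem1001} is invoked to embed $X_1$ into some $Y_1 \in \catfccot$ with $C$-flat cokernel, and pushing out along this embedding yields a sequence $0 \to Y_1 \to V \to M \to 0$ with $V \in \catfc$. Now Lemma~\ref{extgfcvan} legitimately applies (since $Y_1 \in \catfccot$), the sequence splits, and $M$ is a summand of $V \in \catfc$. The induction step reduces to the base case using that $\catgfc$ is closed under kernels of epimorphisms (Proposition~\ref{gfcprojres}). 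The moral is that the kernel must first be enlarged to something $C$-cotorsion before any Ext-vanishing from Lemma~\ref{extgfcvan} is available.
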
 

\begin{proof}
The containment $\catfc\subseteq\catgfc\cap\finrescatfc$ is
from Definition~\ref{notation03}
and Fact~\ref{fact0201}. For the reverse containment, let
$M\in\catgfc\cap\finrescatfc$. Let $n\geq 1$ be an integer with $\fcpd_R(M)\leq n$.
We show by induction on $n$ that $M$ is $C$-flat. 

For the base case $n=1$, there is an exact sequence
\begin{equation} \label{lem1002a} \tag{$\dagger$}
0\to X_1\to X_0\to M\to 0
\end{equation}
with $X_1,X_0\in\catfc$.  Lemma~\ref{lem1001}
provides an exact sequence
\begin{equation} \label{lem1002b} \tag{$\ddagger$}
0\to X_1\to Y_1\to Y_2\to 0
\end{equation}
with $Y_1\in\catfccot$ and $Y_2\in\catfc$.
Consider the following pushout diagram
whose top row is~\eqref{lem1002a} and whose leftmost column is~\eqref{lem1002b}.
\begin{equation}\tag{$\ast$}
\begin{split} \label{lem1002c} 
\xymatrix{
& 0 \ar[d] & 0 \ar[d] \\
0\ar[r]  & X_1\ar[r] \ar[d] \ar@{}[rd]|>>{\lrcorner} & X_0\ar[r] \ar[d] & M\ar[r]\ar[d]_{\cong}  & 0 \\
0\ar[r] & Y_1\ar[r]\ar[d] & V\ar[r]\ar[d] & M\ar[r] & 0 \\
& Y_2\ar[r]^{\cong}\ar[d] & Y_2\ar[d] \\
& 0 & 0
}
\end{split}
\end{equation}
Since $M$ is in $\catgfc$ and $Y_1$ is in $\catfccot$, Lemma~\ref{extgfcvan}
implies $\ext^1_R(M,Y_1)=0$.  Hence, the middle row of~\eqref{lem1002c}
splits. The subcategory $\catfc$ is closed under extensions and summands 
by~\cite[Props.\ 5.1(a) and 5.2(a)]{holm:fear}. Hence, the middle column of~\eqref{lem1002c}
shows that $V\in\catfc$, so the fact that the middle row of~\eqref{lem1002c}
splits implies that $M\in\catfc$, as desired.

For the induction step, assume that $n\geq 2$.
Truncate a bounded $\catfcc$-resolution of $M$
to find an exact sequence 
$$0\to K\to Z\to M\to 0$$
such that $Z\in\catfc$ and $\fcpd_R(K)\leq n-1$.
By induction, we conclude that $K\in\catfc$.
Hence, the displayed sequence implies $\fcpd_R(M)\leq 1$,
and the base case implies that $M\in\catfc$.
\end{proof}

\begin{prop} \label{prop1001}
Let $C$ be a semidualizing $R$-module and assume that $\dim(R)$ is finite.
If $M\in\catgfc$, then there exists an exact sequence
$$0\to K\to X\to M\to 0$$
such that $K\in\catfc$ and $X\in\catgpc$.
\end{prop}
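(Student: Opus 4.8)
The plan is to produce the short exact sequence by taking a $\catgpc$-approximation of $M$ coming from the theory of Gorenstein projective modules with respect to $C$, and then to identify the kernel as a $C$-flat module using the finiteness of $\dim(R)$. First I would recall from \cite{white:gpdrsm} that $(\catgpc, \finrescatpc, \catpc)$ is a (weak) AB-context, and in particular — since $\dim(R)<\infty$ forces every $R$-module to have finite $\catgpc$-projective dimension, just as in the proof of Proposition \ref{prop0301} via the trivial extension $R\ltimes C$ and \cite[(12.3.1)]{enochs:rha} — the triple is an honest AB-context on all of $\catm$. Thus \cite[(4.6)]{white:gpdrsm} (cited in the paragraph preceding the statement) yields an exact sequence
$$0\to K\to X\to M\to 0$$
with $X\in\catgpc$ and $\pcdim_R(K)<\infty$, i.e. $K\in\finrescatpc$. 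Since $\catpc\subseteq\catfc$, a bounded $\catpcc$-resolution of $K$ is also a bounded $\catfcc$-resolution, so $K\in\finrescatfc$, giving $\fcpd_R(K)<\infty$.

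Next I would promote $K$ from finite $C$-flat dimension to being honestly $C$-flat, using the hypothesis $M\in\catgfc$. The key observation is that the sequence $0\to K\to X\to M\to 0$ has $X\in\catgpc\subseteq\catgfc$ (every $\text{G}_C$-projective module is $\text{G}_C$-flat, which follows from Fact \ref{fact0201} together with \cite{holm:smarghd}, or can be cited directly), and $M\in\catgfc$ by hypothesis. Since $\catgfc$ is closed under kernels of epimorphisms by Proposition \ref{gfcprojres}, we get $K\in\catgfc$. Combining $K\in\catgfc$ with $\fcpd_R(K)<\infty$, Lemma \ref{lem1002} gives $K\in\catfc=\catgfc\cap\finrescatfc$. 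This is exactly the $C$-flat conclusion we want, and the sequence $0\to K\to X\to M\to 0$ with $K\in\catfc$ and $X\in\catgpc$ is the desired one.

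The main obstacle — really the only subtle point — is making sure the inputs from \cite{white:gpdrsm} are available in the form needed: namely that when $\dim(R)<\infty$ the triple $(\catgpc,\finrescatpc,\catpc)$ is an AB-context (so that \cite[(4.6)]{white:gpdrsm} applies to every module $M$, not merely to those of finite $\catgpc$-projective dimension), and that $\catgpc\subseteq\catgfc$. The first follows from the trivial-extension argument already used in Proposition \ref{prop0301}: $R\ltimes C$ has finite Krull dimension, so by \cite[(12.3.1)]{enochs:rha} every $R\ltimes C$-module has finite Gorenstein projective dimension, and hence via \cite{holm:smarghd} every $R$-module has finite $\catgpc$-projective dimension. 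The second inclusion is standard. Once these are in hand, the argument is just the two-step "approximate, then improve the kernel" scheme above, with Lemma \ref{lem1002} doing the essential work of the second step.
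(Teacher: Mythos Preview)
Your second step (promoting $K$ from $\finrescatfc$ to $\catfc$ via $K\in\catgfc$ and Lemma~\ref{lem1002}) is exactly what the paper does, and is fine. The gap is in your first step.

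You claim that $\dim(R)<\infty$ forces $(\catgpc,\finrescatpc,\catpc)$ to be an honest AB-context, i.e.\ that \emph{every} $R$-module has finite $\catgpc$-projective dimension. This is false in general. Your proposed justification misreads Proposition~\ref{prop0301}: there \cite[(12.3.1)]{enochs:rha} is invoked only after assuming $C$ is dualizing, which makes $R\ltimes C$ Gorenstein; it is the Gorenstein hypothesis, not merely finite Krull dimension, that gives finiteness of Gorenstein dimensions for all modules. For a non-Gorenstein $R$ with $C=R$, there are modules of infinite Gorenstein projective dimension, so your claim cannot hold.

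The paper closes this gap differently: it uses the hypothesis $M\in\catgfc$ already in the first step, citing \cite[(3.3.c)]{sather:crct} to conclude that a $\text{G}_C$-flat module over a ring of finite Krull dimension has $\gpcpd_R(M)<\infty$ (morally, this is the Gruson--Raynaud/Jensen bound on projective dimension of flats, lifted to the $\text{G}_C$ setting). Only then does \cite[(4.6)]{white:gpdrsm} apply to $M$ specifically. After obtaining the approximation $0\to K\to X\to M\to 0$ with $K\in\finrescatpc$ and $X\in\catgpc$, the paper proceeds exactly as you do: $\catgpc\subseteq\catgfc$ by \cite[(3.3.a)]{sather:crct}, then $K\in\catgfc$ by Proposition~\ref{gfcprojres}, then $K\in\catfc$ by Lemma~\ref{lem1002}.
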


\begin{proof}
Since $M$ is in $\catgfc$ and $\dim(R)<\infty$, we know
that $\gpcpd_R(M)<\infty$ by~\cite[(3.3.c)]{sather:crct}. Hence,
from~\cite[(4.6)]{white:gpdrsm} there is an exact sequence
$$0\to K\to X\to M\to 0$$
with $K\in\finrescatpc$ and $X\in\catgpc$.
From~\cite[(3.3.a)]{sather:crct} we have $X\in\catgpc\subseteq\catgfc$.
Since $\catgfc$ is closed under kernels of epimorphisms
by Proposition~\ref{gfcprojres}, the displayed sequence
implies that $K\in\catgfc$. 
The containment $\catpc\subseteq\catfc$ implies
$K\in\finrescatpc\subseteq\finrescatfc$, and so
Lemma~\ref{lem1002} says $K\in\catfc$.
Thus, the displayed sequence has the desired properties.
\end{proof}

\section{Stability of Categories} \label{sec05}

This section contains our analysis of the categories
$\catg^n(\catfc)$ and $\catg^n(\catfccot)$; see Definition~\ref{defn0502}. 
We draw many of our conclusions from the known behavior 
for $\catg^n(\catic)$ using Pontryagin duals. This requires, however,
the use of the categories
$\cathc^n(\catfc)$ and $\cathc^n(\catfccot)$ as a bridge; see Definition~\ref{defn0501}. 

\begin{lem}  \label{lem0501}
Let $C$ be a semidualizing $R$-module, and let
$X$ be an $R$-complex.
If $X$ is $\hom_R(-,\catfccott)$-exact, then it is
$-\otimes_R\caticc$-exact.
\end{lem}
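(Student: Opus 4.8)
The plan is to translate the hypothesis and conclusion into statements about Pontryagin duals, then invoke Lemma~\ref{lem0202} and Lemma~\ref{pdual} to convert the $\hom_R(-,\catfccott)$-exactness into $-\otimes_R\caticc$-exactness. Recall that for an $R$-module $N$ and an $R$-complex $X$ there is a natural isomorphism of complexes $(X\otimes_R N)^*\cong\hom_R(X,N^*)$ coming from Hom-tensor adjointness together with the fact that $\qmodz$ is injective over $\bbz$; since $\qmodz$ is a faithfully injective $\bbz$-module, a complex of $R$-modules is exact if and only if its Pontryagin dual is exact. So it suffices to show: for every $C$-injective module $I'$ (that is, every module of the form $\hom_R(C,I)$ with $I$ injective), the complex $\hom_R(X,(I')^*)$ is exact. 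The point is that $(I')^*$ is, by Lemma~\ref{pdual}\eqref{pduald}, a $C$-flat $C$-cotorsion module, hence lies in $\catfccot$; and the hypothesis says precisely that $\hom_R(X,W)$ is exact for every $W\in\catfccot$.

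First I would fix an arbitrary module $J\in\caticc$, so $J\cong\hom_R(C,I)$ for some injective $R$-module $I$, and form $J^*$. By Lemma~\ref{pdual}\eqref{pduald}, $J^*$ is $C$-flat and $C$-cotorsion, so $J^*\in\catfccot$. Next, the hypothesis that $X$ is $\hom_R(-,\catfccott)$-exact gives that $\hom_R(X,J^*)$ is an exact complex. Then I would use the natural isomorphism $\hom_R(X,J^*)\cong(X\otimes_R J)^*$ (Hom-tensor adjointness plus injectivity of $\qmodz$ over $\bbz$) to conclude that $(X\otimes_R J)^*$ is exact. Finally, since $\qmodz$ is faithfully injective over $\bbz$, the exactness of $(X\otimes_R J)^*$ forces $X\otimes_R J$ to be exact. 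As $J\in\caticc$ was arbitrary, this shows $X$ is $-\otimes_R\caticc$-exact.

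The main obstacle, such as it is, is simply bookkeeping: making sure the natural isomorphism $\hom_R(X,N^*)\cong(X\otimes_R N)^*$ is applied at the level of complexes (degreewise plus compatibility with differentials) rather than just on homology, and recording that the faithful injectivity of $\qmodz$ over $\bbz$ lets one detect exactness of a complex by that of its dual — both of these are standard and already used elsewhere in the paper (e.g.\ in the proof of Lemma~\ref{pdual}\eqref{pdualc} and Lemma~\ref{extgfcvan}). Everything else is a direct chain of the already-established facts Lemma~\ref{lem0202}, Lemma~\ref{pdual}\eqref{pduald}, and the definitions in Definition~\ref{notation07}, so no genuine difficulty is anticipated.
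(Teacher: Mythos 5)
Your proposal is correct and follows essentially the same route as the paper: dualize an arbitrary $C$-injective module, note via Lemma~\ref{pdual}\eqref{pduald} (together with Lemma~\ref{lem0201}, which identifies $C$-flat $C$-cotorsion modules with members of $\catfccot$) that the dual lies in $\catfccot$, apply the hypothesis, and use adjointness plus faithful injectivity of $\qmodz$ to recover exactness of $X\otimes_R N$.
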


\begin{proof}
Let $N\in\catic$. From Lemmas~\ref{pdual}\eqref{pduald}
and~\ref{lem0201} we know that
the Pontryagin dual  $N^*$ is in $\catfccot$.
Hence, the following complex is exact
by assumption
$$\hom_R(X,N^*)
\cong\hom_R(X,\pdual{N})
\cong\pdual{X\otimes_RN}.
$$
As $\qmodz$ is faithfully injective over $\bbz$,
we conclude that $X\otimes_RN$ is exact,
and so $X$ is $-\otimes_R\caticc$-exact.
\end{proof}

Note that the hypotheses of the next lemma are satisfied whenever
$\catx\subseteq\catgfc$ by Fact~\ref{fact0201} and Lemma~\ref{extgfcvan}.

\begin{lem}  \label{lem0502}
Let $C$ be a semidualizing $R$-module and  $\catx$  a  subcategory
of $\catm$.  
\begin{enumerate}[\quad\rm(a)]
\item \label{lem0502a}
If $\tor^R_{\geq 1}(\catx,\caticc)=0$, 
then
$\tor^R_{\geq 1}(\cathc^n(\catx),\caticc)=0$  for each $n\geq 1$.
\item \label{lem0502b}
If $\catx\perp\catfccot$, 
then $\cathc^n(\catx)\perp\catfccot$ for each $n\geq 1$.
\end{enumerate}
\end{lem}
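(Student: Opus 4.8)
The plan is to prove both parts by induction on $n$, with all the content in the base case $n=1$; the inductive step is purely formal, since $\cathc^{n+1}(\catx)=\cathc(\cathc^n(\catx))$, so once the case $n=1$ is established for an arbitrary subcategory satisfying the relevant vanishing, applying it with $\cathc^n(\catx)$ in place of $\catx$ (legitimate by the induction hypothesis) yields the statement for $n+1$. I will therefore concentrate on $n=1$.

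Fix $M\in\cathc(\catx)$, say $M\cong\coker(\partial^X_1)$ where $X$ is an exact complex in $\catx$ that is $\hom_R(\catpcc,-)$-exact and $\hom_R(-,\catfccott)$-exact. The first step is to note that, since $X$ is exact, $\coker(\partial^X_1)$ is isomorphic to the cycle module $\ker(\partial^X_{-1})$, so $M$ is isomorphic to one of the kernels $\ker(\partial^X_i)$; this is what lets the conclusions of Lemma~\ref{lem0216} about kernels of differentials be transferred to $M$ itself.

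For part~\eqref{lem0502b}: each term $X_i$ lies in $\catx$, hence $X_i\perp\catfccot$ by hypothesis, and $X$ is $\hom_R(-,\catfccott)$-exact; so Lemma~\ref{lem0216}\eqref{lem0216a}, applied with $\catv=\catfccot$, gives $\ker(\partial^X_i)\perp\catfccot$ for all $i$, and in particular $M\perp\catfccot$. For part~\eqref{lem0502a}: here the extra ingredient is Lemma~\ref{lem0501}, which upgrades the $\hom_R(-,\catfccott)$-exactness of $X$ to $-\otimes_R\caticc$-exactness; since moreover $\tor^R_{\geq1}(X_i,\caticc)=0$ for every $i$ by the hypothesis on $\catx$, Lemma~\ref{lem0216}\eqref{lem0216c} yields $\tor^R_{\geq1}(\ker(\partial^X_i),\caticc)=0$ for all $i$, hence $\tor^R_{\geq1}(M,\caticc)=0$.

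There is no real obstacle here. The only point requiring a moment's care is the bookkeeping in the first step, identifying $\coker(\partial^X_1)$ with a kernel of a differential of the exact complex $X$ so that Lemma~\ref{lem0216} applies to $M$ directly; and, in part~\eqref{lem0502a}, remembering to route the argument through Lemma~\ref{lem0501} rather than attempting to obtain $-\otimes_R\caticc$-exactness of $X$ by hand.
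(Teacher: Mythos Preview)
Your proposal is correct and follows essentially the same route as the paper: reduce to $n=1$ by induction, identify $M\cong\ker(\partial^X_{-1})$, and apply Lemma~\ref{lem0216}\eqref{lem0216c} (via Lemma~\ref{lem0501}) for part~\eqref{lem0502a} and Lemma~\ref{lem0216}\eqref{lem0216a} for part~\eqref{lem0502b}. The paper writes out only part~\eqref{lem0502a} and declares part~\eqref{lem0502b} similar, but your argument for~\eqref{lem0502b} is exactly what they intend.
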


\begin{proof}
By induction on $n$, it suffices to prove the result for $n=1$.
We prove part~\eqref{lem0502a}. The proof of part~\eqref{lem0502b}
is similar.

Let $M\in\cathc(\catx)$ with $\catpcc\catfccott$-complete $\catx$-resolution
$X$.  
The complex $X$ is $-\otimes_R\caticc$-exact
by Lemma~\ref{lem0501}. 
Since we have assumed that
$\tor^R_{\geq 1}(\catx,\caticc)=0$, 
the desired conclusion follows from Lemma~\ref{lem0216}\eqref{lem0216c}
because $M\cong\ker(\partial^X_{-1})$.
\end{proof}

The converse of the next result is in Proposition~\ref{prop0502}.

\begin{lem}  \label{lem0503}
If $C$ is a semidualizing $R$-module and
$M\in\cathc(\catfc)$, then $M^*\in\catg(\catic)$.
\end{lem}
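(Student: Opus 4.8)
The plan is to unpack the definition of $\cathc(\catfc)$ and dualize it term by term, using the Pontryagin-duality dictionary already built up in Section~\ref{sec04} and Lemma~\ref{lem0201}. Let $M\in\cathc(\catfc)$ with a $\catpcc\catfccott$-complete $\catfc$-resolution $X$, so $X$ is an exact complex of $C$-flat modules that is $\hom_R(\catpcc,-)$-exact and $\hom_R(-,\catfccott)$-exact, with $M\cong\coker(\partial^X_1)$. I would apply $(-)^*=\Hom_{\bbz}(-,\qmodz)$ to $X$. Since $\qmodz$ is faithfully injective over $\bbz$, the complex $X^*$ is again exact, and $M^*\cong\coker(\partial^X_1)^*\cong\ker((\partial^X_1)^*)$ sits in $X^*$ as the appropriate kernel/cokernel module (up to a harmless shift). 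By Lemma~\ref{pdual}\eqref{pdualb}, each $(X_i)^*$ is $C$-injective, so $X^*$ is a complex in $\catic$.

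Next I would verify the two exactness conditions needed for $X^*$ to be a complete $\catic$-resolution, namely $\hom_R(\catic,-)$-exactness and $\hom_R(-,\catic)$-exactness. For $\hom_R(-,\catic)$-exactness: given $N\in\catic$, Lemma~\ref{pdual}\eqref{pduald} together with Lemma~\ref{lem0201} shows $N^*\in\catfccot$ (this is exactly the observation already used in Lemma~\ref{lem0501}); then $\hom_R(X^*,N)$ relates by adjointness to $\pdual{X\otimes_R N^*}$ — more precisely $\hom_R(X,N^*)\cong\pdual{X\otimes_R N}$ is the wrong pairing, so I should instead use $\hom_R(X^*,N)\cong\hom_R(X^*,\pdual{N^*})\cong\pdual{X^*\otimes_R N^*}$... — here the cleaner route is: $X$ is $\hom_R(-,\catfccott)$-exact, hence $\hom_R(X,N^*)$ is exact for all $N\in\catic$ (since $N^*\in\catfccot$), and $\hom_R(X,N^*)\cong\pdual{X\otimes_R N}$, so $X$ is $-\otimes_R\catic$-exact; but I want a statement about $X^*$, so I dualize once more. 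The upshot is that $\hom_R(X^*,N)\cong\pdual{N\otimes_R X}$ (Hom-evaluation / adjointness, using that $N$ is finitely related-ish via $N=\hom_R(C,I)$, or more simply $\hom_R(X^*,N)=\hom_R(\pdual{X},N)\cong\pdual{N\otimes_R X}$ when $N$ is a module), and this is exact because $X\otimes_R N$ is. For $\hom_R(\catic,-)$-exactness: using $\hom_R(N,X^*)\cong\hom_R(N,\pdual{X})\cong\pdual{N\otimes_R X}$ again, this is exact for the same reason.

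The one genuinely delicate point, and the step I expect to be the main obstacle, is getting the adjunction/evaluation isomorphisms exactly right and confirming that the relevant complex $X\otimes_R N$ (for $N\in\catic$) is actually exact from what is assumed about $X$. The hypothesis gives $\hom_R(-,\catfccott)$-exactness of $X$; Lemma~\ref{lem0501} is precisely the bridge that converts this into $-\otimes_R\catic$-exactness of $X$, so I would invoke it directly rather than re-deriving it. After that, the remaining work is bookkeeping: assemble that $X^*$ is exact, lies in $\catic$, and is both $\hom_R(\catic,-)$-exact and $\hom_R(-,\catic)$-exact, hence is a complete $\catic$-resolution in the sense of Definition~\ref{defn0502}, with $M^*$ one of its cokernel modules (possibly after the shift coming from $M\cong\coker(\partial^X_1)\cong\ker(\partial^X_0)$, which matches $\coker$ of a differential in the shifted complex $\shift^{-1}X$). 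Therefore $M^*\in\catg(\catic)$, as claimed. I would keep the proof short by citing Lemma~\ref{lem0501} for the tensor-exactness, Lemma~\ref{pdual}\eqref{pdualb},\eqref{pduald} and Lemma~\ref{lem0201} for the identification of duals of $C$-flat/$C$-injective and of $\catic$-modules, and the faithful injectivity of $\qmodz$ for the exactness transfers.
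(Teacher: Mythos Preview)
Your overall strategy---dualize the complete resolution $X$ and verify the two exactness conditions for $X^*$---is exactly the paper's, and your treatment of $\hom_R(\catic,-)$-exactness of $X^*$ is correct: for $N\in\catic$, Hom-tensor adjointness gives $\hom_R(N,X^*)\cong\pdual{N\otimes_R X}$, and Lemma~\ref{lem0501} makes $N\otimes_R X$ exact.

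The gap is in the other direction. You assert $\hom_R(X^*,N)\cong\pdual{N\otimes_R X}$, but this is not a valid isomorphism: $\hom_R(\pdual{X},N)$ does not simplify by adjointness (the dual is on the wrong side), and Hom-evaluation does not apply because the $X_i$ are arbitrary $C$-flat modules, not finitely presented. In effect you are trying to extract both exactness conditions from the single hypothesis that $X$ is $\hom_R(-,\catfccott)$-exact, but that hypothesis only buys you one of them. Notice you never use the other half of the definition of a $\catpcc\catfccott$-complete resolution, namely that $X$ is $\hom_R(\catpcc,-)$-exact (equivalently $\hom_R(C,-)$-exact).

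The paper uses that second hypothesis in an essential way: from $\hom_R(C,X)$ exact, Hom-evaluation (valid because $C$ is finitely generated) gives
\[
C\otimes_R X^*\cong C\otimes_R\pdual{X}\cong\pdual{\hom_R(C,X)},
\]
so $C\otimes_R X^*$ is exact; then for any injective $I$, Hom-tensor adjointness yields $\hom_R(X^*,\hom_R(C,I))\cong\hom_R(C\otimes_R X^*,I)$, which is exact. That is how $\hom_R(-,\catic)$-exactness of $X^*$ is obtained. Replace your claimed isomorphism with this argument and the proof goes through.
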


\begin{proof}
Let $X$ be a  $\catpcc\catfccott$-complete $\catfcc$-resolution of $M$.
Lemma~\ref{pdual}\eqref{pdualb}
implies that the complex $X^*=\pdual{X}$ is an exact
complex in $\catic$. Furthermore $M^*\cong\coker(\partial^{X^*}_{1})$.
Thus, it suffices to show that $X^*$ is
$\hom_R(\caticc,-)$-exact and $\hom_R(-,\caticc)$-exact.
Let $I$ be an injective $R$-module.

The second isomorphism in the following sequence
is Hom-evaluation~\cite[(0.3.b)]{christensen:apac} 
$$C\otimes_RX^*
\cong C\otimes_R\pdual{X}
\cong \pdual{\Hom_R(C,X)}.
$$
Since $\Hom_R(C,X)$ is exact by assumption,
we conclude that $C\otimes_RX^*\cong X^*\otimes_RC$ is also exact.
It follows that the following complexes are also exact
$$\hom_R(X^*\otimes_RC,I)
\cong\hom_R(X^*,\Hom_R(C,I))
$$
where the isomorphism is Hom-tensor adjointness.
Thus $X^*$ is $\hom_R(-,\caticc)$-exact.

Lemma~\ref{lem0501} implies that the complex
$\hom_R(C,I)\otimes_RX$ is exact. Hence, the following
complexes are also exact
\begin{align*}
\pdual{\hom_R(C,I)\otimes_RX}
&\cong\hom_R(\hom_R(C,I),\pdual{X})\\
&\cong\hom_R(\hom_R(C,I),X^*)
\end{align*}
and so $X^*$ is $\hom_R(\caticc,-)$-exact.
\end{proof}

The next result is a version of~\cite[(5.2)]{sather:sgc} for $\cathc(\catfc)$.

\begin{prop}  \label{prop0501}
If $C$ is a semidualizing $R$-module, then there is an equality
$\cathc(\catfc)=\catgfc\cap\catbc$.
\end{prop}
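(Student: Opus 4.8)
The plan is to establish the two containments $\cathc(\catfc)\subseteq\catgfc\cap\catbc$ and $\catgfc\cap\catbc\subseteq\cathc(\catfc)$ separately, with the first being straightforward from the machinery already set up and the second requiring an explicit construction of a $\catpcc\catfccott$-complete $\catfcc$-resolution.

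For the forward containment, let $M\in\cathc(\catfc)$ with $\catpcc\catfccott$-complete $\catfcc$-resolution $X$. First I would observe that $X$ is $-\otimes_R\caticc$-exact by Lemma~\ref{lem0501}, and that $\tor^R_{\geq 1}(\catfc,\caticc)=0$ (since $C$-flat modules lie in $\catac$ by Fact~\ref{projac}, or directly since $\catfc\subseteq\catgfc$ and one invokes the remark before Lemma~\ref{lem0502}). Then Lemma~\ref{lem0216}\eqref{lem0216c} gives $\tor^R_{\geq 1}(M,\caticc)=0$, and the exact, $-\otimes_R\caticc$-exact augmented coresolution $0\to M\to X_{-1}\to X_{-2}\to\cdots$ with terms in $\catfc$ shows $M\in\catgfc$ via the characterization in Fact~\ref{fact0201}. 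To see $M\in\catbc$, I would use that $\catfc\subseteq\catbc$ (Fact~\ref{projac}, since $\catfc$ consists of modules of finite $\catfcc$-projective dimension, indeed dimension zero) together with the fact that $\catbc$ is closed under extensions, kernels of epimorphisms and cokernels of monomorphisms; applying this to the finite truncations of $X$ built up from $M\cong\ker(\partial^X_{-1})$ and taking the appropriate syzygy/cosyzygy modules puts $M$ in $\catbc$. (Alternatively, and more cleanly, dualize: Lemma~\ref{lem0503} gives $M^*\in\catg(\catic)=\catgic\cap\catac$ by Fact~\ref{fact0501}, so $M^*\in\catac$, hence $M\in\catbc$ by Fact~\ref{projac}.)

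For the reverse containment, let $M\in\catgfc\cap\catbc$. I need to produce an exact complex $X$ in $\catfc$ that is $\hom_R(\catpcc,-)$-exact and $\hom_R(-,\catfccott)$-exact, with $M\cong\coker(\partial^X_1)$; by Remark~\ref{disc0501} it suffices to build an exact complex in $\catfc$ that is $\hom_R(C,-)$-exact and $\hom_R(-,\catfccott)$-exact. The left half of $X$ (nonnegative degrees) comes from a $\catpcc$-resolution of $M$: since $M\in\catbc$, Fact~\ref{projac} gives such a resolution, which is automatically $\hom_R(C,-)$-exact by Hom-tensor adjointness, and it is $\hom_R(-,\catfccott)$-exact because $\catfc\perp\catfccot$ (Lemma~\ref{lem0201}) via Lemma~\ref{lem0216}\eqref{lem0216a}. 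The right half (negative degrees) is the harder part: since $M\in\catgfc$, it has a complete $\catff\catfcc$-resolution, and I would splice $M$ into the negative tail of that, then repeatedly apply Lemma~\ref{fccotpreenv} — each $M$-like module in $\catgfc$ admits an injective $\catfccott$-preenvelope with cokernel again in $\catgfc$ — to manufacture a coresolution $0\to M\to Y_{-1}\to Y_{-2}\to\cdots$ with each $Y_i\in\catfccot\subseteq\catfc$, each cokernel in $\catgfc$, and the whole thing $\hom_R(-,\catfccott)$-exact; it is then $\hom_R(C,-)$-exact because each cokernel lies in $\catbc$ (intersecting the previous paragraph's result, or since $\catgfc\cap\catbc$ is where we started and is closed appropriately) so the evaluation maps are isomorphisms and $C\otimes_R-$ behaves well. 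Gluing the two halves along $M$ produces the required complex $X$, whence $M\in\cathc(\catfc)$.

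The main obstacle I anticipate is the right-half construction: one must check simultaneously that the coresolution built from iterated $\catfccott$-preenvelopes is genuinely exact, that it is $\hom_R(-,\catfccott)$-exact (this follows from $\catgfc\perp\catfccot$, Lemma~\ref{extgfcvan}, plus Lemma~\ref{perp03}\eqref{perp03item1} applied degreewise), and that it is $\hom_R(C,-)$-exact. For the last point the cleanest route is probably to note that all the cosyzygies stay inside $\catbc$, so applying $\hom_R(C,-)$ to the augmented coresolution yields (after adjointness/evaluation identifications) an exact flat coresolution of $\hom_R(C,M)$; this is essentially the same bookkeeping as in Lemma~\ref{fccotpreenv} and Proposition~\ref{lem0401}, so I would cite those rather than redo it. One should also double-check that the left-half $\catpcc$-resolution and the right-half coresolution agree on the degree-zero/degree-$(-1)$ overlap, i.e. that $\coker(\partial^X_1)\cong M$, which is immediate from the construction.
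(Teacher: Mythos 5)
Your proposal is correct and its skeleton coincides with the paper's: both containments are handled separately, the forward one via Lemma~\ref{lem0501}, Lemma~\ref{lem0216}\eqref{lem0216c} and Fact~\ref{fact0201} for membership in $\catgfc$, and via dualization (Lemma~\ref{lem0503}, Fact~\ref{fact0501}, Fact~\ref{projac}) for membership in $\catbc$; the reverse one by splicing a $\catpcc$-proper $\catpcc$-resolution (Fact~\ref{projac}) with a suitable $\catfcc$-coresolution and checking $\hom_R(C,-)$- and $\hom_R(-,\catfccott)$-exactness with Lemma~\ref{lem0216}, then invoking Remark~\ref{disc0501}. The one genuine difference is the right half: the paper simply takes the negative tail $0\to M\to Y_{-1}\to Y_{-2}\to\cdots$ of a complete $\catff\catfcc$-resolution of $M$ (terms merely in $\catfc$) and verifies the two exactness conditions using Lemmas~\ref{lem0215}, \ref{extgfcvan} and~\ref{lem0201}, whereas you build a fresh $\catfccott$-coresolution by iterating Lemma~\ref{fccotpreenv} (in effect Proposition~\ref{lem0303}). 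Both routes work: yours additionally needs the observation that the cosyzygies stay in $\catbc$ (closure of $\catbc$ under cokernels of monomorphisms) to get $\hom_R(C,-)$-exactness, while the paper gets it directly from $M,Y_i\in\catbc$; in exchange your coresolution has terms in the smaller class $\catfccot$, which is not needed here but is precisely the construction used later for $\cathc(\catfccot)$ in Proposition~\ref{prop0701}. Two of your first-choice justifications do not stand as written, and only the alternatives you offer in parentheses are valid: $\catfc$ lies in $\catbc$, not in $\catac$, so $\tor^R_{\geq 1}(\catfc,\caticc)=0$ must be quoted from Fact~\ref{fact0201} rather than deduced from Auslander-class membership; and the attempt to put $M$ in $\catbc$ by applying the two-of-three closure of $\catbc$ to truncations of the doubly infinite exact complex $X$ is circular (each kernel or cokernel requires the neighboring syzygy or cosyzygy to be in $\catbc$ already, giving an infinite regress), so the dualization argument you label as the \emph{cleaner} route is in fact the necessary one---and it is exactly the paper's.
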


\begin{proof}
For the containment $\cathc(\catfc)\subseteq\catgfc\cap\catbc$,
let $M\in\cathc(\catfc)$, and
let $X$ be a $\catpcc\catfccott$-complete $\catfcc$-resolution of $M$.
Lemma~\ref{lem0501} implies that $X$ is 
$-\otimes_R\caticc$-exact, and so the 
sequence
$$0\to M\to X_{-1}\to X_{-2}\to\cdots$$
satisfies condition~\ref{fact0201}\eqref{fact0201a}.
Fact~\ref{fact0201} implies $\tor^R_{\geq 1}(\catfcc,\caticc)=0$
and so Lemma~\ref{lem0502}\eqref{lem0502a} provides
$\tor_{\geq 1}^R(M,\caticc)=0$.
From Fact~\ref{fact0201} we conclude $M\in\catgfc$.
Also, Lemma~\ref{lem0503} guarantees that
$M^*\in\catg(\catic)$,
and so $M^*\in\catac$
by Fact~\ref{fact0501}.
Thus, Fact~\ref{projac}
implies $M\in\catbc$.

For the reverse containment, let $M\in\catgfc\cap\catbc$,
and let $Y$ be a complete $\catff\catfcc$-resolution of $M$.
In particular, the complex
\begin{equation} \label{prop0501a} \tag{$\dagger$}
0\to M\to Y_{-1}\to Y_{-2}\to\cdots
\end{equation}
is an augmented $\catfcc$-coresolution of $M$
and is $-\otimes_R\caticc$-exact.
We claim that this complex is also $\hom_R(C,-)$-exact
and $\hom_R(-,\catfccott)$-exact.
For each $i\in\bbz$ set 
$M_i=\coker(\partial^Y_i)$.  This yields an isomorphism
$M\cong M_1$.
By assumption, we have $M,Y_i\in\catbc$ for each $i<0$,
and so $C\perp M$ and $C\perp Y_i$.
Thus, Lemma~\ref{lem0216}\eqref{lem0216b} implies
that the complex~\eqref{prop0501a}  is $\hom_R(C,-)$-exact.
From Lemma~\ref{lem0215} we conclude $M_i\in\catgfc$ for each $i$,
and so  $M_i\perp\catfccot$ by Lemma~\ref{extgfcvan}.
Lemma~\ref{lem0201} implies
$Y_i\perp \catfccot$ for each $i<0$, and so
Lemma~\ref{lem0216}\eqref{lem0216a} guarantees
that~\eqref{prop0501a}  is also $\hom_R(-,\catfccott)$-exact.

Because $M\in\catbc$,
Fact~\ref{projac} provides an augmented $\catpcc$-proper $\catpcc$-resolution
\begin{equation} \label{prop0501b} \tag{$\ddagger$}
\cdots\xra{\partial^Z_2} Z_1\xra{\partial^Z_1} Z_0\to M\to 0.
\end{equation}
Since each $Z_i\in\catpc\subseteq\catfc$, we have $Z_i\perp \catfccot$
by Lemma~\ref{lem0201}.
Since $M\perp\catfccot$, we see from 
Lemma~\ref{lem0216}\eqref{lem0216a} 
that~\eqref{prop0501b} is also $\hom_R(-,\catfccott)$-exact.

It follows that the complex obtained by splicing the sequences~\eqref{prop0501a}
and~\eqref{prop0501b} is a $\catpcc\catfccott$-complete $\catfcc$-resolution of $M$.
Thus $M\in\cathc(\catfc)$, as desired.
\end{proof}

Our next result contains the converse to Lemma~\ref{lem0503}.

\begin{prop}  \label{prop0502}
Let $C$ be a semidualizing $R$-module and $M$ an $R$-module.
Then $M\in\cathc(\catfc)$ if and only if $M^*\in\catg(\catic)$.
\end{prop}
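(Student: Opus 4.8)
The plan is to prove the two implications separately. One direction is already available: Lemma~\ref{lem0503} gives ``$M\in\cathc(\catfc)$ implies $M^*\in\catg(\catic)$,'' so only the converse needs work. For that converse, the strategy is to combine Proposition~\ref{prop0501} (which identifies $\cathc(\catfc)$ with $\catgfc\cap\catbc$) with the known structure theory for $\catg(\catic)$, namely Fact~\ref{fact0501}, which says $\catg(\catic)=\catgic\cap\catac$. Thus it suffices to show: if $M^*\in\catgic\cap\catac$, then $M\in\catgfc\cap\catbc$.

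First I would handle the Bass-class membership. Assuming $M^*\in\catac$, Fact~\ref{projac} immediately yields $M\in\catbc$ (this is the stated equivalence ``$M\in\catbc$ if and only if $M^*\in\catac$''). Second, for the Gorenstein flat condition: assuming $M^*\in\catgic$, I would invoke Lemma~\ref{pdual2}, which states that $M$ is in $\catgfc$ if and only if $M^*$ is in $\catgic$ --- wait, one must be careful here, since $M^*\in\catgic$ is being assumed, not $M^{**}\in\catgic$. Lemma~\ref{pdual2} applied directly gives $M\in\catgfc \iff M^*\in\catgic$, which is exactly what is needed: from $M^*\in\catgic$ we conclude $M\in\catgfc$. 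Combining the two conclusions, $M\in\catgfc\cap\catbc=\cathc(\catfc)$ by Proposition~\ref{prop0501}, completing the converse.

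Putting it together: the proof reads ``By Lemma~\ref{lem0503} and Proposition~\ref{prop0501} we have the forward implication, since $M\in\cathc(\catfc)$ gives $M^*\in\catg(\catic)$. Conversely, if $M^*\in\catg(\catic)=\catgic\cap\catac$ (Fact~\ref{fact0501}), then $M^*\in\catac$ forces $M\in\catbc$ by Fact~\ref{projac}, while $M^*\in\catgic$ forces $M\in\catgfc$ by Lemma~\ref{pdual2}; hence $M\in\catgfc\cap\catbc=\cathc(\catfc)$ by Proposition~\ref{prop0501}.''

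The main obstacle I anticipate is simply bookkeeping of which Pontryagin dual sits where: one must verify that Lemma~\ref{pdual2} is being applied to $M$ itself (giving $M\in\catgfc \iff M^*\in\catgic$) rather than needing a double-dual argument, and that Fact~\ref{fact0501}'s identity $\catg^n(\catic)=\catgic\cap\catac$ is invoked with $n=1$. No genuine new computation is required --- every ingredient is already assembled in the preceding results, so this is essentially a two-line synthesis, and the risk is only in mismatching a dual or a class inclusion.
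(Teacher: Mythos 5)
Your proposal is correct and follows exactly the paper's own argument: the forward implication via Lemma~\ref{lem0503}, and the converse by writing $\catg(\catic)=\catgic\cap\catac$ (Fact~\ref{fact0501}), then combining Fact~\ref{projac}, Lemma~\ref{pdual2}, and Proposition~\ref{prop0501} to get $M\in\catgfc\cap\catbc=\cathc(\catfc)$. Your careful bookkeeping of which dual is used is exactly the right (and only) subtlety, and it checks out.
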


\begin{proof}
One implication is in Lemma~\ref{lem0503}.
For the converse, assume that $M^*$ is in $\catg(\catic)=\catgic\cap\catac$; 
see Fact~\ref{fact0501}.
Fact~\ref{projac} and Lemma~\ref{pdual2} combine with
Proposition~\ref{prop0501} to
yield $M\in\catbc\cap\catgfc=\cathc(\catfc)$.
\end{proof}

The next three lemmata are for use in Theorem~\ref{thm0501}.

\begin{lem}  \label{lem0504}
If $C$ is a semidualizing $R$-module, then $\cathc^2(\catfc)\subseteq\catbc$.
\end{lem}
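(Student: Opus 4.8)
The plan is to deduce $\cathc^2(\catfc)\subseteq\catbc$ from Proposition~\ref{prop0501}, which identifies $\cathc(\catfc)$ with $\catgfc\cap\catbc$, together with the fact that $\catbc$ is closed under kernels of epimorphisms (Fact~\ref{projac}). The key observation is that a module $M\in\cathc^2(\catfc)=\cathc(\cathc(\catfc))$ comes with a $\catpcc\catfccott$-complete $\cathc(\catfc)$-resolution $X$, i.e. an exact complex $X$ with each $X_i\in\cathc(\catfc)=\catgfc\cap\catbc$, which is $\hom_R(\catpcc,-)$-exact and $\hom_R(-,\catfccott)$-exact, and $M\cong\coker(\partial^X_1)$.

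\emph{First} I would record that each $X_i$ lies in $\catbc$, by Proposition~\ref{prop0501}. \emph{Next}, the natural candidate is to show that the augmented complex
$$\cdots\to X_1\to X_0\to M\to 0$$
obtained by truncating $X$ is $\hom_R(C,-)$-exact; combined with the vanishing $\ext^{\geq1}_R(C,X_i)=0$ (which holds since $X_i\in\catbc$), this would let me run down the resolution and conclude $\ext^{\geq1}_R(C,M)=0$ together with $M\cong C\otimes_R\hom_R(C,M)$ and $\tor^R_{\geq1}(C,\hom_R(C,M))=0$, hence $M\in\catbc$. To get the $\hom_R(C,-)$-exactness of $X$: the complex $X$ is $\hom_R(\catpcc,-)$-exact by hypothesis, and since $C\cong R\otimes_R C\in\catpc$, the complex $X$ is in particular $\hom_R(C,-)$-exact. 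So that step is immediate.

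\emph{Then} the remaining work is the descent argument. Setting $M_i=\coker(\partial^X_i)$ so that $M\cong M_1$, each short exact sequence $0\to \ker\to X_i\to M_i\to 0$ that appears in splicing $X$ shows, via Lemma~\ref{perp03}\eqref{perp03item2} and the fact that the augmented truncation is $\hom_R(C,-)$-exact, that $C\perp M_i$ for every $i$; in particular $C\perp M$. Knowing $\ext^{\geq1}_R(C,M)=0$, I can then invoke Fact~\ref{projac}: since $\catbc$ is closed under cokernels of monomorphisms and each $X_i\in\catbc$, climbing up finitely many steps of the exact complex $X$ would give $M\in\catbc$ — but $X$ is not a finite resolution, so I instead use that $M$ is an infinite syzygy of $C$-modules in $\catbc$ with the relevant Ext-vanishing, applying the characterization of $\catbc$ directly: $\ext^{\geq1}_R(C,M)=0$ is in hand, and the evaluation-map and $\tor$-vanishing conditions propagate along the exact, $\hom_R(C,-)$-exact complex because each $X_i\in\catbc$. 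Alternatively, and more cleanly, one notes $M\in\catgfc$ automatically (by Remark~\ref{disc0501} or directly, since $\catg(\cathc(\catfc))\subseteq\cathc(\cathc(\catfc))$ and $\cathc(\catfc)\subseteq\catgfc$ with $\catgfc$ closed under the relevant operations), and then it suffices to show $M^*\in\catac$; but $M^*$ is a cokernel in $X^*$, an exact complex in $\catic\subseteq\catac$ which is $\hom_R(C,-)$-exact after dualizing (using Lemma~\ref{pdual} and Lemma~\ref{lem0501}), and $\catac$ is closed under the relevant operations by Fact~\ref{projac}, giving $M^*\in\catac$ and hence $M\in\catbc$ by Fact~\ref{projac}.

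\emph{The main obstacle} I anticipate is bookkeeping rather than conceptual: making precise the sense in which $\catbc$ (or $\catac$) is ``closed under infinite syzygies'' along an exact, appropriately Hom-exact complex with all terms in $\catbc$. The cleanest route is the dual one — push everything through Pontryagin duality to land in $\catg(\catic)=\catgic\cap\catac$, where the analogous infinite-syzygy stability for $\catac$ is either already available or follows from Fact~\ref{projac} applied one step at a time together with the Ext/Tor-vanishing that $\hom_R(C,-)$-exactness supplies — and then dualize back using Lemma~\ref{pdual2} and Fact~\ref{projac} to conclude $M\in\catbc$.
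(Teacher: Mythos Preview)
Your primary approach---verify the three $\catbc$-conditions for $M$ directly from the $\catpcc\catfccott$-complete $\cathc(\catfc)$-resolution $X$, using that $C\in\catpc$ makes $X$ automatically $\hom_R(C,-)$-exact and that each $X_i\in\catbc$ by Proposition~\ref{prop0501}---is exactly what the paper does. The paper makes the ``propagation'' step you leave vague precise as follows: from $\hom_R(C,-)$-exactness and $\ext^{\geq1}_R(C,X_i)=0$ one gets $\ext^{\geq1}_R(C,M)=0$ via Lemma~\ref{lem0216}\eqref{lem0216b}; since $C\otimes_R\hom_R(C,X_i)\cong X_i$ for all $i$, the exact complex $\hom_R(C,X)$ is $-\otimes_RC$-exact, so Lemma~\ref{lem0216}\eqref{lem0216c} gives $\tor^R_{\geq1}(C,\hom_R(C,M))=0$; finally a diagram chase on the presentation $X_1\to X_0\to M\to 0$ shows the evaluation map is an isomorphism.

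Your ``cleaner'' dual route, however, has two genuine errors. First, the complex $X^*$ is \emph{not} a complex in $\catic$: the modules $X_i$ lie in $\cathc(\catfc)=\catgfc\cap\catbc$, not in $\catfc$, so Lemma~\ref{pdual} does not apply to them; at best Proposition~\ref{prop0502} gives $X_i^*\in\catg(\catic)$, and you are then back to the same infinite-syzygy bookkeeping for $\catac$ that you were trying to avoid. Second, the claim that $M\in\catgfc$ ``automatically'' is not justified: the containment $\catg(\cathc(\catfc))\subseteq\cathc^2(\catfc)$ goes the wrong direction for what you need, and showing $\cathc^2(\catfc)\subseteq\catgfc$ is part of what Theorem~\ref{thm0501} establishes \emph{after} this lemma. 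So the dual route does not sidestep the work; stick with the direct verification.
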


\begin{proof}
Let $M\in\cathc^2(\catfc)$ and let $X$ be a 
$\catpcc\catfccott$-complete $\cathc(\catfcc)$-resolution of $M$.
In particular, the complex $\hom_R(C,X)$ is exact.
Each module $X_i$ is in $\cathc(\catfc)\subseteq\catbc$
by Proposition~\ref{prop0501}, and so
$\ext^{\geq 1}_R(C,X_i)=0$ for each $i$.
Thus, Lemma~\ref{lem0216}\eqref{lem0216b}
implies that $\ext^{\geq 1}_R(C,M)=0$.
Also, since $M\cong\ker(\partial^X_{-1})$, the left-exactness of
$\hom_R(C,-)$ implies that
$\hom_R(C,M)\cong\ker(\partial^{\hom_R(C,X)}_{-1})$.

The natural evaluation map $C\otimes_R\hom_R(C,X_i)\to X_i$ is an isomorphism
for each $i$ because $X_i\in\catbc$,
and so we have $C\otimes_R\hom_R(C,X)\cong X$.  In particular,
the complex $\hom_R(C,X)$ is $-\otimes_R C$-exact.
As $\tor^R_{\geq 1}(C,\hom_R(C,X_i))=0$ for each $i$,
Lemma~\ref{lem0216}\eqref{lem0216c}
implies that $\tor^R_{\geq 1}(C,\hom_R(C,M))=0$.

Finally, each row in  the following diagram is exact
$$
\xymatrix{
C\otimes_R\hom_R(C,X_1)
\ar[r] \ar[d]_{\cong}
& C\otimes_R\hom_R(C,X_0)
\ar[r] \ar[d]_{\cong}
& C\otimes_R\hom_R(C,M)
\ar[r] \ar[d]
& 0 \\
X_1\ar[r] &X_0\ar[r] & M\ar[r] & 0
}$$
and the vertical arrows are the natural evaluation maps.
A diagram chase shows that the rightmost vertical arrow
is an isomorphism, and so $M\in\catbc$.
\end{proof}

\begin{lem}  \label{lem0505}
If $C$ is a semidualizing $R$-module, then $\catfccot$ is an injective
cogenerator for $\cathc(\catfc)$.
\end{lem}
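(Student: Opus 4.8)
The plan is to check the two conditions in the definition of injective cogenerator (Definition~\ref{notation01a}): first that $\cathc(\catfc)\perp\catfccot$, and second that every $M\in\cathc(\catfc)$ sits in an exact sequence $0\to M\to W\to M'\to 0$ with $W\in\catfccot$ and $M'\in\cathc(\catfc)$. The key reduction I would use is Proposition~\ref{prop0501}, which identifies $\cathc(\catfc)$ with $\catgfc\cap\catbc$; this lets me import what is already known about $\catgfc$ and only worry about staying inside the Bass class.

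First I would verify the orthogonality: since $\cathc(\catfc)\subseteq\catgfc$ and Proposition~\ref{lem0303} gives $\catgfc\perp\catfccot$ (this also follows from Lemma~\ref{extgfcvan} together with $\catfccot\subseteq\finrescatfccot$), we get $\cathc(\catfc)\perp\catfccot$. Next, for the cogenerator condition, let $M\in\cathc(\catfc)$, so in particular $M\in\catgfc$. Because $\catfccot$ is a cogenerator for $\catgfc$ by Proposition~\ref{lem0303}, there is an exact sequence $0\to M\to W\to M'\to 0$ with $W\in\catfccot$ and $M'\in\catgfc$. The remaining point is to see that $M'$ also lies in $\catbc$: we have $M\in\cathc(\catfc)\subseteq\catbc$ by Proposition~\ref{prop0501}, and $W\in\catfccot\subseteq\catfc\subseteq\catbc$ by Lemma~\ref{lem0201} and Fact~\ref{projac}, so the closure of $\catbc$ under cokernels of monomorphisms (Fact~\ref{projac}) yields $M'\in\catbc$. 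Hence $M'\in\catgfc\cap\catbc=\cathc(\catfc)$ by Proposition~\ref{prop0501}, and the displayed sequence is the required one.

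I do not anticipate a genuine obstacle: all the substantive work has already been carried out in Proposition~\ref{prop0501}, Proposition~\ref{lem0303}, Lemma~\ref{fccotpreenv}, and the closure properties of the Bass class in Fact~\ref{projac}. The only step requiring a moment's care is confirming that the cokernel $M'$ in the cogenerator sequence stays in $\catbc$, and that is immediate once one records the inclusions $\catfccot\subseteq\catfc\subseteq\catbc$.
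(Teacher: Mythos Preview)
Your proposal is correct and follows essentially the same route as the paper: reduce via Proposition~\ref{prop0501} to $\catgfc\cap\catbc$, invoke the injective-cogenerator property of $\catfccot$ for $\catgfc$ from Proposition~\ref{lem0303} (equivalently Lemma~\ref{extgfcvan}), and then use closure of $\catbc$ under cokernels of monomorphisms (Fact~\ref{projac}) to keep the cokernel in $\cathc(\catfc)$. The only point the paper makes explicit that you leave implicit is the containment $\catfccot\subseteq\cathc(\catfc)$, which is part of the standing hypothesis in Definition~\ref{notation01a}; this is immediate from Remark~\ref{disc0501} (or from $\catfccot\subseteq\catfc\subseteq\catbc$ together with $\catfccot\subseteq\catgfc$ and Proposition~\ref{prop0501}), and you already record the relevant inclusions.
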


\begin{proof}
The containment in the following sequence is from
Facts~\ref{projac} and~\ref{fact0201}
$$\catfccot\subseteq\catgfc\cap\catbc=\cathc(\catfc)$$
and the equality is from Proposition~\ref{prop0501}.
Lemma~\ref{extgfcvan} implies $\catgfc\perp\catfccot$.
Thus, the conditions $\cathc(\catfc)=\catgfc\cap\catbc
\subseteq\catgfc$ imply that we have $\cathc(\catfc)\perp\catfccot$.

Let $M\in\cathc(\catfc)\subseteq\catgfc$.  
Since $\catfccot$ is an injective cogenerator for $\catgfc$
by Proposition~\ref{lem0303},
there is an exact sequence
$$0\to M\to X\to M'\to 0$$
with $X\in\catfccot$ and $M'\in\catgfc$.
Since $M$ and $X$ are in $\catbc$,
Fact~\ref{projac} implies that $M'\in\catbc$.
That is $M'\in\catgfc\cap\catbc=\cathc(\catfc)$.
This extablishes the desired conclusion.
\end{proof}

\begin{lem}  \label{lem0506}
If $C$ is a semidualizing $R$-module, then $\cathc^2(\catfc)\subseteq\propcorescatfccot$.
\end{lem}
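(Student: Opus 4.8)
The plan is to reduce the statement to Lemma~\ref{lem0602}, applied with $\catw=\catfccot$ and $\catx=\cathc(\catfc)$. Since $\catfccot$ is an injective cogenerator for $\cathc(\catfc)$ by Lemma~\ref{lem0505}, it will then suffice to show that every $M\in\cathc^2(\catfc)$ admits a $\catfccott$-proper $\cathc(\catfc)$-coresolution and satisfies $M\perp\catfccot$. To produce the coresolution, I would fix $M\in\cathc^2(\catfc)=\cathc(\cathc(\catfc))$ together with a $\catpcc\catfccott$-complete $\cathc(\catfcc)$-resolution $X$ of $M$; thus each $X_i$ lies in $\cathc(\catfc)$, the complex $X$ is exact and $\hom_R(-,\catfccott)$-exact, and $M\cong\coker(\partial^X_1)$. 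Extracting the coresolution half of $X$ yields an augmented $\cathc(\catfc)$-coresolution
$$0\to M\to X_{-1}\to X_{-2}\to\cdots$$
of $M$, so the remaining point is that this coresolution is $\catfccott$-proper.

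For the orthogonality, I would use Lemma~\ref{lem0201} to get $\catfc\perp\catfccot$ and then Lemma~\ref{lem0502}\eqref{lem0502b} to obtain $\cathc^n(\catfc)\perp\catfccot$ for every $n\geq 1$; in particular each $X_i$ satisfies $X_i\perp\catfccot$ and $M\perp\catfccot$. Since $X$ is exact and $\hom_R(-,\catfccott)$-exact with $X_i\perp\catfccot$ for all $i$, Lemma~\ref{lem0216}\eqref{lem0216a} gives $\ker(\partial^X_i)\perp\catfccot$ for all $i$. Each short exact sequence $0\to\ker(\partial^X_i)\to X_i\to\ker(\partial^X_{i-1})\to 0$ then has all three of $\ker(\partial^X_i)$, $X_i$, $\ker(\partial^X_{i-1})$ orthogonal to $\catfccot$, hence is $\hom_R(-,\catfccott)$-exact by Lemma~\ref{perp03}\eqref{perp03item1}; splicing these over $i\leq-1$ shows that the augmented coresolution $0\to M\to X_{-1}\to X_{-2}\to\cdots$ is $\hom_R(-,\catfccott)$-exact, that is, it is $\catfccott$-proper. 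Now Lemma~\ref{lem0602} applies to give $M\in\propcorescatfccot$.

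I do not expect a genuine obstacle: every ingredient---Lemmas~\ref{lem0602}, \ref{lem0505}, \ref{lem0502}, \ref{lem0201}, \ref{lem0216} and~\ref{perp03}---is already available, and the argument is a direct assembly of them. The one step that needs a little care is the passage from the $\hom_R(-,\catfccott)$-exactness of the full complex $X$ to that of its truncated coresolution; this is the standard truncation bookkeeping encapsulated by Lemma~\ref{lem0216}, carried out in the same spirit in the proofs of Proposition~\ref{prop0501} and Lemma~\ref{lem0504}.
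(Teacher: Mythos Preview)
Your argument is correct and follows essentially the same route as the paper: both set up Lemma~\ref{lem0602} with $\catw=\catfccot$ and $\catx=\cathc(\catfc)$, invoke Lemma~\ref{lem0505} for the injective-cogenerator hypothesis, use Lemma~\ref{lem0502}\eqref{lem0502b} for $M\perp\catfccot$, and take the coresolution half of a $\catpcc\catfccott$-complete $\cathc(\catfcc)$-resolution as the required $\catfccott$-proper $\cathc(\catfc)$-coresolution. The only difference is cosmetic: the paper declares that truncated coresolution $\catfccott$-proper ``by definition,'' whereas you spell out the truncation bookkeeping via Lemmas~\ref{lem0216}\eqref{lem0216a} and~\ref{perp03}\eqref{perp03item1}, which is a perfectly acceptable (and arguably cleaner) justification.
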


\begin{proof}
Lemma~\ref{lem0505} says that $\catfccot$ is an injective
cogenerator for $\cathc(\catfc)$.
By Lemma~\ref{lem0502}\eqref{lem0502b} we know that
$\cathc^2(\catfc)\perp\catfccot$.
Let $M\in\cathc^2(\catfc)$ and let $X$ be a
$\catpcc\catfccott$-complete $\cathc(\catfcc)$-resolution of $M$.
By definition, the complex
$$0\to M\to X_{-1}\to X_{-2}\to\cdots$$
is an augmented $\cathc(\catfcc)$-coresolution
that is $\catfcc$-proper
and therefore $\catfccott$-proper. Hence, Lemma~\ref{lem0602}
implies $M\in\propcorescatfccot$. 
\end{proof}

\begin{thm}  \label{thm0501}
For each semidualizing $R$-module $C$ and each integer $n\geq 1$,
there is an equality $\cathc^n(\catfc)=\catgfc\cap\catbc$.
\end{thm}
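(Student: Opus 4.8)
The plan is to prove the equality $\cathc^n(\catfc)=\catgfc\cap\catbc$ by induction on $n$, using Proposition~\ref{prop0501} as the base case $n=1$. For the inductive step, it suffices (in view of Remark~\ref{disc0501}, which gives $\cathc^n(\catfc)\subseteq\cathc^{n+1}(\catfc)$, and Proposition~\ref{prop0501}) to show $\cathc^{n+1}(\catfc)\subseteq\cathc(\catfc)$; combined with the induction hypothesis $\cathc^n(\catfc)=\cathc(\catfc)$, this forces $\cathc^{n+1}(\catfc)=\cathc(\cathc^n(\catfc))=\cathc(\cathc(\catfc))=\cathc^2(\catfc)$, so the whole problem reduces to proving $\cathc^2(\catfc)\subseteq\cathc(\catfc)=\catgfc\cap\catbc$.

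First I would establish that every $M\in\cathc^2(\catfc)$ lies in $\catbc$: this is exactly Lemma~\ref{lem0504}. Next I would show $M\in\catgfc$. Let $X$ be a $\catpcc\catfccott$-complete $\cathc(\catfcc)$-resolution of $M$; applying Pontryagin duality and Lemma~\ref{lem0503} (each $X_i\in\cathc(\catfc)$, so $X_i^*\in\catg(\catic)$) together with the argument of Lemma~\ref{lem0503} itself, one sees $X^*$ is a complete $\catic\catii$-type resolution witnessing $M^*\in\catg^2(\catic)$. But $\catg^2(\catic)=\catgic\cap\catac$ by Fact~\ref{fact0501}, so $M^*\in\catg(\catic)$, and then Proposition~\ref{prop0502} gives $M\in\cathc(\catfc)$, hence $M\in\catgfc$. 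Alternatively, and perhaps more cleanly, I would argue directly: Lemma~\ref{lem0502}\eqref{lem0502a} (whose hypothesis holds for $\catx=\cathc(\catfc)$ since $\cathc(\catfc)\subseteq\catgfc$ by Proposition~\ref{prop0501} and Fact~\ref{fact0201}) yields $\tor^R_{\geq 1}(M,\caticc)=0$, while Lemma~\ref{lem0501} shows the coresolution $0\to M\to X_{-1}\to X_{-2}\to\cdots$ is $-\otimes_R\caticc$-exact with each $X_i\in\catgfc$; splicing with an augmented $\catpcc$-resolution of $M$ (available since $M\in\catbc$) produces a complete $\catff\catfcc$-resolution of $M$ after checking the relevant exactness, giving $M\in\catgfc$ via Fact~\ref{fact0201}. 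Either route closes the containment $\cathc^2(\catfc)\subseteq\catgfc\cap\catbc$; the reverse $\catgfc\cap\catbc=\cathc(\catfc)\subseteq\cathc^2(\catfc)$ is Remark~\ref{disc0501}.

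The main obstacle is the $M\in\catgfc$ half of $\cathc^2(\catfc)\subseteq\catgfc\cap\catbc$: one must produce from the ``second-level'' resolution $X$ (built out of modules in $\cathc(\catfc)$, not out of genuine flat and $C$-flat modules) an honest complete $\catff\catfcc$-resolution, or equivalently verify that $M^*$ stabilizes back into $\catg(\catic)$. The cleanest implementation is to reduce everything to the already-established stability result for $\catg^n(\catic)$, namely Fact~\ref{fact0501}, by transporting across Pontryagin duality — the functor $(-)^*$ converts $\catfc$-complete resolutions into $\catic$-complete resolutions (Lemmas~\ref{pdual}, \ref{pdual2}, \ref{lem0503}, \ref{prop0502}), so $\cathc^n(\catfc)$ corresponds to $\catg^n(\catic)$ on duals, and $\catg^n(\catic)=\catgic\cap\catac$ is constant in $n$. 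I expect the write-up to be short once the correspondence $M\in\cathc^n(\catfc)\iff M^*\in\catg^n(\catic)$ is articulated, which itself follows by induction from Proposition~\ref{prop0502} plus the observation that duality and the formation of cokernels/kernels commute appropriately.

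\begin{proof}
We proceed by induction on $n$, the case $n=1$ being Proposition~\ref{prop0501}.
Assume $n\geq 1$ and $\cathc^n(\catfc)=\catgfc\cap\catbc$. By Remark~\ref{disc0501}
we have $\catgfc\cap\catbc=\cathc^n(\catfc)\subseteq\cathc^{n+1}(\catfc)$, so it
remains to prove $\cathc^{n+1}(\catfc)\subseteq\catgfc\cap\catbc$. Since
$\cathc^{n+1}(\catfc)=\cathc(\cathc^n(\catfc))=\cathc(\cathc(\catfc))=\cathc^2(\catfc)$
by the induction hypothesis, it suffices to show $\cathc^2(\catfc)\subseteq\catgfc\cap\catbc$.

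Let $M\in\cathc^2(\catfc)$. Then $M\in\catbc$ by Lemma~\ref{lem0504}. To see
$M\in\catgfc$, let $X$ be a $\catpcc\catfccott$-complete $\cathc(\catfcc)$-resolution
of $M$, so $X$ is an exact complex with each $X_i\in\cathc(\catfc)=\catgfc\cap\catbc$
(Proposition~\ref{prop0501}), and $M\cong\ker(\partial^X_{-1})$. The complex $X$ is
$-\otimes_R\caticc$-exact by Lemma~\ref{lem0501}, and Fact~\ref{fact0201} gives
$\tor^R_{\geq 1}(\cathc(\catfc),\caticc)=0$, so Lemma~\ref{lem0502}\eqref{lem0502a}
yields $\tor^R_{\geq 1}(M,\caticc)=0$. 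Moreover, the augmented coresolution
$$0\to M\to X_{-1}\to X_{-2}\to\cdots$$
is $-\otimes_R\caticc$-exact with each $X_i$ in $\catgfc$. Because $M\in\catbc$,
Fact~\ref{projac} provides an augmented $\catpcc$-proper $\catpcc$-resolution
$$\cdots\to Z_1\to Z_0\to M\to 0$$
with each $Z_i\in\catpc\subseteq\catfc$; splicing this with the coresolution above
produces an exact $\catfcc$-complex $W$ with $M\cong\coker(\partial^W_1)$, and
Lemma~\ref{lem0216}\eqref{lem0216c} shows $W$ is $-\otimes_R\caticc$-exact since
each $W_i$ is $C$-flat or flat and hence in $\catac$. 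Therefore $M$ satisfies
conditions~\ref{fact0201}\eqref{fact0201a} and~\eqref{fact0201b}, so $M\in\catgfc$
by Fact~\ref{fact0201}. This gives $\cathc^2(\catfc)\subseteq\catgfc\cap\catbc$,
completing the induction.
\end{proof}
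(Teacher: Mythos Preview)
There is a genuine gap in your proof. When you splice the $\catpcc$-resolution of $M$ with the coresolution $0\to M\to X_{-1}\to X_{-2}\to\cdots$ and assert that the resulting complex $W$ is ``an exact $\catfcc$-complex'' with ``each $W_i$ $C$-flat or flat,'' this is false: the modules $X_{-1},X_{-2},\ldots$ lie only in $\cathc(\catfc)=\catgfc\cap\catbc$, not in $\catfc$. Consequently you have not produced an $\catfcc$-coresolution of $M$, so condition~\eqref{fact0201a} of Fact~\ref{fact0201} is not verified and the conclusion $M\in\catgfc$ does not follow. Knowing that $M\in\catbc$ and $\tor^R_{\geq 1}(M,\caticc)=0$ is not enough by itself.

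The paper closes exactly this gap with Lemmas~\ref{lem0505} and~\ref{lem0506}: one first shows that $\catfccot$ is an injective cogenerator for $\cathc(\catfc)$, and then invokes Lemma~\ref{lem0602} to replace the $\cathc(\catfc)$-coresolution of $M$ by an honest $\catfccott$-proper $\catfccott$-coresolution. Splicing \emph{that} with the $\catpcc$-resolution gives a $\catpcc\catfccott$-complete $\catfcc$-resolution, so $M\in\cathc(\catfc)$ directly (and hence $M\in\catgfc\cap\catbc$ via Proposition~\ref{prop0501}). Your Pontryagin-duality sketch in the preamble has an analogous gap: to show $M^*\in\catg^2(\catic)$ you would need $X^*$ to be $\hom_R(\catg(\catic),-)$- and $\hom_R(-,\catg(\catic))$-exact, not merely $\hom_R(\caticc,-)$- and $\hom_R(-,\caticc)$-exact, and the argument of Lemma~\ref{lem0503} does not deliver that.
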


\begin{proof}
We first verify the equality $\cathc^2(\catfc)=\cathc(\catfc)$.
Remark~\ref{disc0501} implies $\cathc^2(\catfc)\supseteq\cathc(\catfc)$.  
For the reverse containment, let
$M\in\cathc^2(\catfc)$. 
Lemma~\ref{lem0201} implies $\catfc\perp\catfccot$,
and so $M\perp\catfccot$ by Lemma~\ref{lem0502}\eqref{lem0502b}.
From Lemma~\ref{lem0504} we have
$M\in\catbc$, and so Fact~\ref{projac}
provides an augmented $\catpcc$-proper $\catpcc$-resolution
\begin{equation} \label{thm0501b} \tag{$\ddagger$}
\cdots\xra{\partial^Z_2} Z_1\xra{\partial^Z_1} Z_0\to M\to 0.
\end{equation}
Each $Z_i\in\catpc\subseteq\catfc$, so we have $Z_i\perp \catfccot$
by Lemma~\ref{lem0201}.
We conclude from Lemma~\ref{lem0216}\eqref{lem0216a} 
that~\eqref{thm0501b} is $\hom_R(-,\catfccott)$-exact.

Lemma~\ref{lem0506} yields a $\catfccott$-proper augmented $\catfccott$-coresolution
\begin{equation} \label{thm0501a} \tag{$\dagger$}
0\to M\to Y_{-1}\to Y_{-2}\to\cdots.
\end{equation}
Since each $Y_i\in\catfccot\subseteq\catbc$ by Fact~\ref{projac},
we have $C\perp Y_i$ for each $i<0$, and similarly $C\perp M$.
Thus, Lemma~\ref{lem0216}\eqref{lem0216b} implies
that~\eqref{thm0501a}  is $\hom_R(C,-)$-exact.
It follows that the complex obtained by splicing the sequences~\eqref{thm0501b}
and~\eqref{thm0501a} is a $\catpcc\catfccott$-complete $\catfcc$-resolution of $M$.
Thus, we have $M\in\cathc(\catfc)$.

To complete the proof, use the previous two paragraphs and
argue by induction on $n$ to verify
the first equality in the next sequence
$$\cathc^n(\catfc)=\cathc(\catfc)=\catgfc\cap\catbc.$$
The second equality is from Proposition~\ref{prop0501}.
\end{proof}

Our next result contains Theorem~\ref{thmb}\eqref{thmb1} from the introduction.

\begin{cor} \label{cor0501}
If $C$ is a semidualizing $R$-module, then
$\catg^n(\catgfc\cap\catbc)=\catgfc\cap\catbc$
for each $n\geq 1$.
\end{cor}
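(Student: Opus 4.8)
The plan is to obtain this as a quick consequence of Theorem~\ref{thm0501}, by comparing the operator $\catg$ with the operator $\cathc$. Write $\catx=\catgfc\cap\catbc$. The containment $\catx\subseteq\catg^n(\catx)$ holds for every $n\geq 0$ by Fact~\ref{fact0501}, so the whole task reduces to proving $\catg^n(\catx)\subseteq\catx$ for each $n\geq 1$.

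First I would record that $\catfc\subseteq\catx$: a $C$-flat $R$-module is $\text{G}_C$-flat by Fact~\ref{fact0201} and lies in $\catbc$ by Fact~\ref{projac}. This is exactly the hypothesis needed to invoke Remark~\ref{disc0501}, which then yields $\catg^n(\catx)\subseteq\cathc^n(\catx)$ for each $n\geq 1$; so it suffices to show $\cathc^n(\catx)=\catx$.

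Next I would identify $\cathc^n(\catx)$. By Proposition~\ref{prop0501} we have $\catx=\cathc(\catfc)$; feeding this into the recursion $\cathc^{k+1}(-)=\cathc(\cathc^k(-))$ of Definition~\ref{defn0501} and inducting on $n$ gives $\cathc^n(\catx)=\cathc^n(\cathc(\catfc))=\cathc^{n+1}(\catfc)$. Since $n+1\geq 2\geq 1$, Theorem~\ref{thm0501} applies and gives $\cathc^{n+1}(\catfc)=\catgfc\cap\catbc=\catx$. Chaining the inclusions, $\catg^n(\catx)\subseteq\cathc^n(\catx)=\catx$, as required.

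I do not expect a serious obstacle: the genuine content has already been established in Proposition~\ref{prop0501} and Theorem~\ref{thm0501}, which are where the flat/cotorsion and Pontryagin-duality techniques of the paper do their work. The only points demanding a little care are purely formal --- confirming the inclusion $\catfc\subseteq\catx$ so that Remark~\ref{disc0501} is applicable, and unwinding the iterated operators to see $\cathc^n(\cathc(\catfc))=\cathc^{n+1}(\catfc)$ --- and both are routine.
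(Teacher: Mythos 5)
Your proposal is correct and follows essentially the same route as the paper: both use Fact~\ref{fact0501} for the containment $\catgfc\cap\catbc\subseteq\catg^n(\catgfc\cap\catbc)$, Remark~\ref{disc0501} (with $\catfc\subseteq\catgfc\cap\catbc=\cathc(\catfc)$) to pass from $\catg^n$ to $\cathc^n$, and then Proposition~\ref{prop0501} together with Theorem~\ref{thm0501} to identify $\cathc^{n+1}(\catfc)=\catgfc\cap\catbc$. The only cosmetic difference is that you verify $\catfc\subseteq\catgfc\cap\catbc$ directly via Facts~\ref{fact0201} and~\ref{projac}, while the paper rewrites the category as $\cathc(\catfc)$ first; the argument is the same.
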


\begin{proof}
In the next sequence, the containments are from 
Fact~\ref{fact0501} and Remark~\ref{disc0501}
\begin{align*}
\catgfc\cap\catbc
&\subseteq\catg^n(\catgfc\cap\catbc)
=\catg^n(\cathc(\catfc))\\
&\subseteq\cathc^n(\cathc(\catfc))
=\catgfc\cap\catbc
\end{align*}
and the equalities are by Proposition~\ref{prop0501}
and  Theorem~\ref{thm0501}.
\end{proof}

\begin{disc} \label{disc0801}
In light of Corollary~\ref{cor0501}, it is natural to ask
whether we have $\catg(\catfc)=\catgfc\cap\catbc$
for each semidualizing $R$-module $C$.  
While Remark~\ref{disc0501}
and Proposition~\ref{prop0501} imply that
$\catg(\catfc)\subseteq\catgfc\cap\catbc$,
we do not know whether the reverse containment holds.
\end{disc}

We now turn our attention to $\cathc^n(\catfccot)$ and $\catg^n(\catfccot)$.

\begin{prop} \label{prop0701}
Let $C$ be a semidualizing $R$-module and let $n\geq 1$.
\begin{enumerate}[\quad\rm(a)]
\item \label{prop0701a}
We have 
$\catgfc\cap\catbc\cap\catfc^{\perp}
\subseteq\cathc^n(\catfccot)\subseteq\catgfc\cap\catbc$.
\item \label{prop0701b}
If $\dim(R)<\infty$,
then $\catfc\perp\cathc^n(\catfccot)$.
\item \label{prop0701c}
If $\dim(R)<\infty$, then 
$\cathc^n(\catfccot)=\catgfc\cap\catbc\cap\catfc^{\perp}$.
\end{enumerate}
\end{prop}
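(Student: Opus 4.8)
The plan is to establish parts~\eqref{prop0701a} and~\eqref{prop0701b} and then read off~\eqref{prop0701c}. The containment $\cathc^n(\catfccot)\subseteq\catgfc\cap\catbc$ in~\eqref{prop0701a} is formal: every $\catpcc\catfccott$-complete $\catfccott$-resolution is in particular a $\catpcc\catfccott$-complete $\catfcc$-resolution since $\catfccot\subseteq\catfc$, so monotonicity of the operation $\cathc(-)$ and induction on $n$ give $\cathc^n(\catfccot)\subseteq\cathc^n(\catfc)$, and the latter equals $\catgfc\cap\catbc$ by Theorem~\ref{thm0501}. For the reverse, since $\cathc(\catfccot)\subseteq\cathc^n(\catfccot)$ by Remark~\ref{disc0501}, it is enough to show $\catgfc\cap\catbc\cap\catfc^{\perp}\subseteq\cathc(\catfccot)$; I fix $M$ in this intersection and assemble a $\catpcc\catfccott$-complete $\catfccott$-resolution of $M$ by splicing a ``left half'' and a ``right half'' along $M$.

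For the left half, the hypotheses of Lemma~\ref{lem0206}\eqref{lem0206c} hold: $M$ is $C$-cotorsion because $M\in\catfc^{\perp}$, and the remaining conditions hold because $M\in\catbc$. This provides an $\catfcc$-proper $\catfccott$-resolution $\cdots\to X_1\to X_0\to M\to 0$ whose syzygies are $C$-cotorsion. The associated augmented resolution is $\hom_R(\catfcc,-)$-exact, hence $\hom_R(\catpcc,-)$-exact; it is zero in sufficiently negative degrees, every term lies in $\catbc$, and every term is $\perp\catfccot$ (by Lemma~\ref{lem0201} for the $X_i$ and Lemma~\ref{extgfcvan} for $M$), so Lemma~\ref{lem0216} makes it $\hom_R(-,\catfccott)$-exact as well. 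For the right half, Proposition~\ref{lem0303} gives that $\catfccot$ is an injective cogenerator for $\catgfc$, so iterating yields an exact sequence $0\to M\to W_0\to W_1\to\cdots$ with every $W_i\in\catfccot$ and every cokernel in $\catgfc$; by Fact~\ref{projac} these cokernels also lie in $\catbc$, and then Lemma~\ref{lem0216} (using that $\ext^{\geq1}_R(C,-)$ vanishes on $\catbc$, together with the $\perp\catfccot$ orthogonality) shows the associated augmented coresolution is both $\hom_R(C,-)$-exact and $\hom_R(-,\catfccott)$-exact. Splicing the two halves along $M$ produces a doubly infinite exact complex in $\catfccot$ with $M$ a cokernel of one of its differentials; a short diagram chase at the splice---using that $\hom_R(C,-)$-exactness of the left half already forces $\hom_R(C,X_0)\onto\hom_R(C,M)$, and using left-exactness of $\hom_R(C,-)$ and of $\hom_R(-,W)$ on the right half---shows this complex is $\hom_R(C,-)$-exact and $\hom_R(-,\catfccott)$-exact, hence a $\catpcc\catfccott$-complete $\catfccott$-resolution of $M$ by Remark~\ref{disc0501}. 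Therefore $M\in\cathc(\catfccot)$.

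For~\eqref{prop0701b} I prove by induction on $n$ that every module in $\cathc^n(\catfccot)$ is $C$-cotorsion; the base case $n=0$ is Lemma~\ref{lem0201}. Let $M\in\cathc^n(\catfccot)$ have $\catpcc\catfccott$-complete $\cathc^{n-1}(\catfccot)$-resolution $X$. By induction every $X_i$ is $C$-cotorsion, and $X_i\in\catbc$ by~\eqref{prop0701a}. A computation in the spirit of the proof of Lemma~\ref{lem0201} (resolve $F\otimes_RC$ by $Q\otimes_RP$ for projective resolutions $Q\to F$ and $P\to C$, and use that $\ext^{\geq1}_R(C,-)$ vanishes on $\catbc$) yields $\ext^i_R(F\otimes_RC,N)\cong\ext^i_R(F,\Hom_R(C,N))$ for $F$ flat and $N\in\catbc$; consequently $\Hom_R(C,X_i)$ is cotorsion, and it suffices to prove that $\Hom_R(C,M)$ is cotorsion. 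Applying $\Hom_R(C,-)$ to the exact, $\hom_R(C,-)$-exact complex $X$ produces an exact complex of cotorsion modules having $\Hom_R(C,M)$ as a syzygy. At this point $\dim(R)<\infty$ is used: every flat $R$-module $F$ has $\pd_R(F)\leq\dim(R)$, so $\ext^i_R(F,-)=0$ for $i>\dim(R)$; since any cotorsion module $N$ satisfies $\ext^{\geq1}_R(F,N)=0$ for all flat $F$ (the syzygies of $F$ in a projective resolution are again flat), dimension-shifting $\dim(R)$ steps along the above complex of cotorsion modules forces $\ext^1_R(F,\Hom_R(C,M))=0$. Thus $\Hom_R(C,M)$ is cotorsion, and since $M\in\catbc$ the displayed isomorphism gives $\catfc\perp M$, completing the induction.

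Finally~\eqref{prop0701c} is immediate: part~\eqref{prop0701b} gives $\cathc^n(\catfccot)\subseteq\catfc^{\perp}$, which combined with the second containment of part~\eqref{prop0701a} yields $\cathc^n(\catfccot)\subseteq\catgfc\cap\catbc\cap\catfc^{\perp}$, while the reverse containment is the first containment of~\eqref{prop0701a}. The main obstacle I anticipate is the construction in~\eqref{prop0701a}: keeping all three exactness conditions under control through the splice is the delicate step, and it is precisely where the cotorsion-flat-cover machinery of Lemma~\ref{lem0206} (ultimately Xu's theory) is indispensable. In~\eqref{prop0701b}, by contrast, the only genuine point is recognizing that finite Krull dimension is needed exactly to bound the projective dimension of flat modules, so that the dimension shift terminates.
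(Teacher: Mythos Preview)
Your proof is correct and follows essentially the same approach as the paper's. For part~\eqref{prop0701a} you build the same two halves (an $\catfcc$-proper $\catfccott$-resolution from Lemma~\ref{lem0206}\eqref{lem0206c} and a proper $\catfccott$-coresolution from Proposition~\ref{lem0303}) and splice them, and for part~\eqref{prop0701b} you run the same induction using the Gruson--Raynaud--Jensen bound $\pd_R(F)\leq\dim(R)$; the only cosmetic difference is that you apply $\Hom_R(C,-)$ to the whole complex first and then dimension-shift in the resulting complex of cotorsion modules, whereas the paper dimension-shifts with $\ext_R^{\ast}(F\otimes_RC,-)$ along the original complex and invokes the adjunction $\ext^i_R(F\otimes_RC,N)\cong\ext^i_R(F,\Hom_R(C,N))$ only at the final step.
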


\begin{proof}
\eqref{prop0701a}
For the first containment,
let $M\in \catgfc\cap\catbc\cap\catfc^{\perp}$.  
Since $M\in\catbc\cap\catfc^{\perp}$,
Lemma~\ref{lem0206}\eqref{lem0206c}
yields an augmented $\catfccott$-resolution
$$\cdots\to Z_1\to Z_0\to M\to 0$$
that is $\hom_R(C,-)$-exact;
the argument of Proposition~\ref{prop0501} shows that this resolution is
$\hom_R(-,\catfccott)$-exact.
Because $M$ is in $\catgfc$, Proposition~\ref{lem0303}
provides an augmented $\catfccott$-coresolution
$$0\to M\to Y_{-1}\to Y_{-2}\to\cdots$$
that is $\Hom_R(-,\catfccott)$-exact.
Since $M\in\catbc$, the proof of Proposition~\ref{prop0501}
shows that this coresolution is also $\Hom_R(C,-)$-exact.
Splicing these resolutions yields a $\catpcc\catfccott$-complete $\catfccott$-resolution of $M$,
and so $M\in\cathc(\catfccot)\subseteq\cathc^n(\catfccot)$.

The second containment follows from the next sequence
$$\cathc^n(\catfccot)\subseteq\cathc^n(\catfc)=\catgfc\cap\catbc$$
wherein the containment is by definition, and the equality is
by Theorem~\ref{thm0501}.

\eqref{prop0701b}
Assume $d=\dim(R)<\infty$. 
A result of Gruson and 
Raynaud~\cite[Seconde Partie, Thm.~(3.2.6)]{raynaud:cpptpm} and 
Jensen~\cite[Prop.~6]{jensen:vl} implies $\pd_R(F)\leq d<\infty$ 
for each flat $R$-module $F$.

We prove the result for all $n\geq 0$ by induction on $n$.
The base case $n=0$ follows from Lemma~\ref{lem0201}.
Assume $n\geq 1$ and that $\catfc\perp\cathc^{n-1}(\catfccot)$.
Let $M\in\cathc^n(\catfccot)$,
and let $X$ be a $\catpcc\catfccott$-complete $\cathc^{n-1}(\catfccott)$-resolution of $M$.
For each $i$ set $M_i=\im(\partial^X_i)$.
This yields an isomorphism $M\cong M_0$ and, for each $i$, an exact sequence
$$0\to M_{i+1}\to X_i\to M_i\to 0.$$
Note that $M_i,X_i\in\catbc$ by part~\eqref{prop0701a}.
Let $F\otimes_RC\in\catfc$ and let $t\geq 1$.
Since $\catfc\perp X_i$ for each $i$, a standard dimension-shifting argument
yields the first isomorphism in the next sequence
\begin{align*}
\ext^t_R(F\otimes_RC,M)
&\cong\ext^{t+d}_R(F\otimes_RC,M_d)
\cong\ext^{t+d}_R(F,\hom_R(C,M_d))=0.
\end{align*}
The second isomorphism is a form of Hom-tensor adjointness
using the fact that $F$ is flat with the Bass class condition
$\ext^{\geq 1}_R(C,M_d)=0$.
The vanishing follows from the inequality $\pd_R(F)\leq d$.

\eqref{prop0701c}
This follows from parts~\eqref{prop0701a}
and~\eqref{prop0701b}.
\end{proof}

\begin{lem} \label{lem0801}
Let $C$ be a semidualzing $R$-module and assume
$\dim(R)<\infty$.  If $M\in\catfc$, then $\catfccott\text{-}\id_R(M)\leq\dim(R)<\infty$.
\end{lem}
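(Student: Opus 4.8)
The plan is to build a $\catfccott$-coresolution of $M$ by iterating Lemma~\ref{lem1001}, and then to show by dimension shifting that after $\dim R$ steps the cosyzygy is already $C$-flat $C$-cotorsion, so the coresolution terminates.

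First, set $d=\dim R$, and recall from the proof of Proposition~\ref{prop0701} the Gruson--Raynaud and Jensen bound $\pd_R(F')\leq d$ for every flat $R$-module $F'$. Starting from $N_0:=M\in\catfc$ and applying Lemma~\ref{lem1001} repeatedly produces, for each $i\geq 0$, a short exact sequence $0\to N_i\to W^i\to N_{i+1}\to 0$ with $W^i\in\catfccot$ and $N_{i+1}\in\catfc$; splicing the first $d$ of these gives an exact sequence
\[
0\to M\to W^0\to W^1\to\cdots\to W^{d-1}\to N_d\to 0
\]
with each $W^i$ in $\catfccot$ and $N_d$ in $\catfc$. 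It remains to prove $N_d\in\catfccot$, which by Lemma~\ref{lem0201} reduces to showing $\catfc\perp N_d$. Since $\catfc\perp\catfccot$ by Lemma~\ref{lem0201}, dimension shifting along the displayed sequence gives, for each flat $R$-module $F'$ and each $t\geq 1$, an isomorphism $\ext^t_R(F'\otimes_R C,N_d)\cong\ext^{t+d}_R(F'\otimes_R C,M)$. Writing $M\cong F\otimes_R C$ with $F$ flat and using the isomorphism $\ext^i_R(F'\otimes_R C,F\otimes_R C)\cong\ext^i_R(F',F)$ established in the proof of Lemma~\ref{lem0201}, the target becomes $\ext^{t+d}_R(F',F)$, which vanishes because $\pd_R(F')\leq d$. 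Hence $\catfc\perp N_d$, so $N_d\in\catfccot$; then the displayed sequence is an augmented $\catfccott$-coresolution of $M$ of length $d$, and $\catfccott\text{-}\id_R(M)\leq d<\infty$.

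The splicing and the dimension shift are routine. The one point requiring care is the transition from Ext-vanishing for honest flat modules to Ext-vanishing against the $C$-flat module $F'\otimes_R C$: since $\pd_R(F'\otimes_R C)$ need not be finite, one cannot argue with projective dimensions of $C$-flat modules directly, and must instead route through the comparison isomorphism $\ext^i_R(F'\otimes_R C,F\otimes_R C)\cong\ext^i_R(F',F)$ recorded in the proof of Lemma~\ref{lem0201}.
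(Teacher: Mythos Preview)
Your proof is correct but takes a genuinely different route from the paper's. The paper writes $M\cong F\otimes_RC$ with $F$ flat, invokes the result of Enochs--Jenda \cite[(8.5.12)]{enochs:rha} that $F$ admits an $\catfcott$-coresolution $X$ of length at most $d=\dim R$, and then observes that since $F$ and each $X_i$ lie in $\catac$, tensoring the augmented coresolution with $C$ keeps it exact and yields an $\catfccott$-coresolution of $M$ of the same length. You instead build the coresolution directly at the $C$-flat level by iterating Lemma~\ref{lem1001}, and then terminate it via dimension shifting together with the Gruson--Raynaud--Jensen bound and the comparison isomorphism $\ext^i_R(F'\otimes_RC,F\otimes_RC)\cong\ext^i_R(F',F)$ from the proof of Lemma~\ref{lem0201}. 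The paper's argument is shorter because it outsources the boundedness to an external reference; yours is more self-contained, relying only on tools already assembled in the paper (plus the Gruson--Raynaud--Jensen bound, which the paper also invokes in the proof of Proposition~\ref{prop0701}). Your route also makes transparent \emph{why} the bound is $\dim R$: it is exactly the projective dimension cutoff for flat modules that forces the dimension-shifted Ext to vanish.
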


\begin{proof}
Let $F$ be a flat $R$-module such that $M\cong F\otimes_R C$.
Since $d=\dim(R)$ is finite, the flat module $F$ has
an $\catfcott$-coresolution
$X$ such that $X_i=0$ for all $i<-d$;
see \cite[(8.5.12)]{enochs:rha}.
Since $M\in\catac$ and each $X_i\in\catac$,
it follows readily that the complex $X\otimes_R F$
is an $\catfccott$-coresolution of $M$ of length
at most $d$, as desired.
\end{proof}

Our final result contains Theorem~\ref{thmb}\eqref{thmb2} from the introduction.

\begin{thm} \label{prop0702}
Let $C$ be a semidualzing $R$-module and assume
$\dim(R)<\infty$.  Then
$\catg^n(\catfccot)=\catgfc\cap\catbc\cap\catfc^{\perp}$
for each $n\geq 1$,
and $\catfccot$ is an injective cogenerator and a 
projective generator for
$\catgfc\cap\catbc\cap\catfc^{\perp}$.
\end{thm}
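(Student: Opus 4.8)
The plan is to show that $\catfccot$ is at once an injective cogenerator and a projective generator for $\catD:=\catgfc\cap\catbc\cap\catfc^{\perp}$, and then to combine this with Proposition~\ref{prop0701}\eqref{prop0701c} and one technical lemma to conclude $\catg^{n}(\catfccot)=\catD$ for all $n\geq1$.

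I would begin with the cogenerator and generator properties, which are largely bookkeeping with the closure properties at hand. First $\catfccot\subseteq\catD$ by Fact~\ref{fact0201} and Lemma~\ref{lem0201}; next $\catD\perp\catfccot$ because $\catD\subseteq\catgfc$ and $\catgfc\perp\catfccot$ by Lemma~\ref{extgfcvan}, while $\catfccot\perp\catD$ because $\catfccot\subseteq\catfc$ and $\catfc\perp M$ for every $M\in\catfc^{\perp}$. Given $M\in\catD\subseteq\catgfc$, Proposition~\ref{lem0303} yields an exact sequence $0\to M\to X\to M'\to 0$ with $X\in\catfccot$ and $M'\in\catgfc$; closure of $\catbc$ under cokernels of monomorphisms (Fact~\ref{projac}) together with the long exact sequence in $\ext_{R}^{\ast}(F\otimes_{R}C,-)$ (using $\catfccot\subseteq\catfc^{\perp}$ and $M\in\catfc^{\perp}$) gives $M'\in\catbc\cap\catfc^{\perp}$, hence $M'\in\catD$, so $\catfccot$ cogenerates $\catD$. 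For the generator property, $M\in\catD$ meets the hypotheses of Lemma~\ref{lem0206}, so its $C$-flat cover is a surjective $\catfccott$-cover $0\to K\to F\otimes_{R}C\to M\to 0$ with $K$ being $C$-cotorsion; since $\catgfc$ and $\catbc$ are closed under kernels of epimorphisms (Proposition~\ref{gfcprojres}, Fact~\ref{projac}) and ``$C$-cotorsion'' means membership in $\catfc^{\perp}$, we get $K\in\catD$. This already establishes the final assertion of the theorem.

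With $\catfccot$ both an injective cogenerator and a projective generator for $\catD$, the containment $\catD\subseteq\catg(\catfccot)$ follows by a standard splicing argument: for $M\in\catD$, iterating Lemma~\ref{lem0206}\eqref{lem0206a} gives a $\catfccott$-proper $\catfccott$-resolution with all syzygies in $\catD$, Proposition~\ref{lem0303} gives a $\catfccott$-proper $\catfccott$-coresolution with all cosyzygies in $\catD$, and splicing these produces an exact complex in $\catfccot$ whose images all lie in $\catD$; it is $\hom_{R}(-,\catfccott)$-exact since $\catgfc\perp\catfccot$ and $\hom_{R}(\catfccot,-)$-exact since $\catfccot\perp\catD$ (both via Lemma~\ref{lem0216}\eqref{lem0216a},\eqref{lem0216b}), hence a complete $\catfccot$-resolution of $M$, so $M\in\catg(\catfccot)\subseteq\catg^{n}(\catfccot)$. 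The reverse inclusion rests on the following claim, which is where I expect the real work to lie: \emph{if $\dim(R)<\infty$ and $X$ is an exact complex with terms in $\catD$ which is $\hom_{R}(\catfccot,-)$-exact, then $X$ is $\hom_{R}(\catpcc,-)$-exact.} By Remark~\ref{disc0501} (Hom-tensor adjointness) this reduces to exactness of the complex $\hom_{R}(C,X)$, whose terms $\hom_{R}(C,X_{i})$ are cotorsion (each $X_{i}$ is $C$-cotorsion and lies in $\catbc$, which forces $\rhom_{R}(G,\hom_{R}(C,X_{i}))\simeq\rhom_{R}(G\otimes_{R}C,X_{i})$ for $G$ flat), while the hypothesis on $X$ says, again by adjointness, that $\hom_{R}(F,\hom_{R}(C,X))$ is exact for every flat cotorsion $R$-module $F$. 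Since $\dim(R)<\infty$, the ring $R$ has a bounded coresolution $0\to R\to F^{0}\to\cdots\to F^{d}\to 0$ by flat cotorsion modules with flat images (cf.~\cite[(8.5.12)]{enochs:rha} and \cite[(3.1.6)]{xu:fcm}); applying $\hom_{R}(-,\hom_{R}(C,X))$ to it, flatness of the images together with the fact that the terms of $\hom_{R}(C,X)$ are cotorsion keep the resulting sequence of complexes exact, and exactness of each $\hom_{R}(F^{k},\hom_{R}(C,X))$ then forces $\hom_{R}(C,X)$ to be exact.

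Granting the claim, a complete $\catfccot$-resolution is automatically a $\catpcc\catfccott$-complete resolution, so $\catg(\catfccot)\subseteq\cathc(\catfccot)=\catD$ by Proposition~\ref{prop0701}\eqref{prop0701c}; with the previous paragraph this gives $\catg(\catfccot)=\catD$. Running the same argument with $\catD$ in place of $\catfccot$ — a complete $\catD$-resolution still has terms in $\catD$ and is $\hom_{R}(\catfccot,-)$- and $\hom_{R}(-,\catfccott)$-exact because $\catfccot\subseteq\catD$ — gives $\catg(\catD)\subseteq\cathc(\catD)=\cathc^{2}(\catfccot)=\catD$, and since $\catD\subseteq\catg(\catD)$ by Fact~\ref{fact0501}, an induction on $n$ yields $\catg^{n}(\catfccot)=\catD$ for all $n\geq1$. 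The main obstacle is exactly the displayed claim: complete resolutions by $\catfccot$ (or by $\catD$) are not \emph{a priori} $\hom_{R}(C,-)$-exact, and closing this gap is the one place — beyond the appeal to Proposition~\ref{prop0701}\eqref{prop0701c}, which itself uses it — where finiteness of $\dim(R)$ is genuinely needed.
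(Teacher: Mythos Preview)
Your proposal is correct, and both arguments ultimately reduce to the equality $\catg(\catfccot)=\cathc(\catfccot)$ together with Proposition~\ref{prop0701}\eqref{prop0701c}, but you take a more hands-on route at two points. First, the paper does not prove the generator/cogenerator properties or the splicing argument directly: it simply establishes $\catg(\catfccot)=\cathc(\catfccot)$, invokes~\cite[(4.10)]{sather:sgc} (valid because $\catfccot\perp\catfccot$) to get $\catg^{n}(\catfccot)=\catg(\catfccot)$ for all $n$, and then cites~\cite[(4.7)]{sather:sgc} for the generator/cogenerator conclusion. Your explicit treatment of these parts is fine and more self-contained, but longer. Second, and more interestingly, your key claim (that a $\hom_R(\catfccot,-)$-exact complex in $\catD$ is $\hom_R(C,-)$-exact) is handled quite differently. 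The paper argues as follows: the images $M_i$ satisfy $\catfccot\perp M_i$ by Lemma~\ref{lem0216}\eqref{lem0216b}, hence $\fincorescatfccot\perp M_i$ by Lemma~\ref{gencat01}; since $\dim(R)<\infty$, Lemma~\ref{lem0801} gives $\catfc\subseteq\fincorescatfccot$, so $\catfc\perp M_i$, and another application of Lemma~\ref{lem0216}\eqref{lem0216b} shows the complex is $\hom_R(\catfcc,-)$-exact. Your argument via a bounded $\catfcott$-coresolution of $R$ with flat images and a filtration of $\hom_R(C,X)$ is valid and uses the same underlying ingredient (finite $\catfcott$-coinjective dimension of flat modules), but the paper's packaging through $\fincorescatfccot$ is shorter and avoids the cotorsion verification for $\hom_R(C,X_i)$ and the spectral-sequence-style exactness argument.
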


\begin{proof}
We first show $\catg(\catfccot)\supseteq\cathc(\catfccot)$.
Let $M\in\cathc(\catfccot)$ and let $X$ be a $\catpcc\catfccott$-complete 
$\catfccott$-resolution of $M$.
To show that $M$ is in $\catg(\catfccot)$, it suffices to show
that $X$ is $\hom_R(\catfccott,-)$-exact, since it is
$\hom_R(-,\catfccott)$-exact by definition.
For each $i$, set $M_i=\im(\partial^X_i)\in\cathc(\catfccot)$.
Lemma~\ref{lem0201} and
Proposition~\ref{prop0701}\eqref{prop0701b}
imply $\catfc\perp X_i$ and $\catfc\perp M_i$ for all $i$.
Hence, Lemma~\ref{lem0216}\eqref{lem0216b}
implies that $X$ is $\hom_R(\catfcc,-)$-exact, and so
$X$ is $\hom_R(\catfccott,-)$-exact. 

We next show $\catg(\catfccot)\subseteq\cathc(\catfccot)$.
Let $N\in\catg(\catfccot)$ and let $Y$ be a complete 
$\catfccott$-resolution of $N$.  
We will show that $Y$ is $\hom_R(\catfcc,-)$-exact;
the containment $\catpc\subseteq \catfc$ will then imply that
$Y$ is $\hom_R(\catpcc,-)$-exact.
Since $Y$ is 
$\hom_R(-,\catfccott)$-exact by definition,
we will then conclude  that $N$ is in $\cathc(\catfccot)$.
We have $\catfc\perp Y_i$ for each $i$ by Lemma~\ref{lem0201},
and so $\catfccot\perp Y_i$. Since $Y$ is $\hom_R(\catfccott,-)$-exact,
Lemma~\ref{lem0216}\eqref{lem0216b} implies $\catfccot\perp M$.
From Lemma~\ref{gencat01} we conclude that $\fincorescatfccot\perp M$.
Since $\dim(R)<\infty$, Lemma~\ref{lem0801} implies that
$\catfc\subseteq \fincorescatfccot$ and so $\catfc\perp M$.
With the condition $\catfc\perp Y_i$ from above, this implies that
$Y$ is $\hom_R(\catfcc,-)$-exact by Lemma~\ref{lem0216}\eqref{lem0216b}.

The above paragraphs yield the second equality in the next sequence
\begin{align*}
\catg^n(\catfccot)
&=\catg(\catfccot)=\cathc(\catfccot) =\catgfc\cap\catbc\cap\catfc^{\perp}.
\end{align*}
The first equality is from ~\cite[(4.10)]{sather:sgc}
since Lemma~\ref{lem0201}
implies $\catfccot\perp\catfccot$, and the third
equality is  from Proposition~\ref{prop0701}\eqref{prop0701c}.
The final conclusion follows from~\cite[(4.7)]{sather:sgc}.
\end{proof}

\providecommand{\bysame}{\leavevmode\hbox to3em{\hrulefill}\thinspace}
\providecommand{\MR}{\relax\ifhmode\unskip\space\fi MR }
\providecommand{\MRhref}[2]{%
  \href{http://www.ams.org/mathscinet-getitem?mr=#1}{#2}
}
\providecommand{\href}[2]{#2}

\end{document}